\documentclass{article}
\usepackage{graphicx} 

\title{CMZ_10_23}
\author{elvira.zappale }
\date{october 2023}

\usepackage{amsmath, amssymb ,amsthm, amsfonts, amsgen,
mathrsfs}
\usepackage{mathtools}
\usepackage{color}
\usepackage{esint,amssymb}
\usepackage[notref, notcite]{showkeys}
\usepackage{color}
\usepackage{tikz}
\usepackage{tikz-3dplot}
\usetikzlibrary{3d,calc,shapes}
\usepackage{bbm}
\usepackage{cite}
\definecolor{vg}{rgb}{0.0, 0.26, 0.15}
\numberwithin{equation}{section} \setlength{\textwidth}{6.5in}
  \setlength{\textheight}{9in}
  \setlength{\oddsidemargin}{0in}
  \setlength{\evensidemargin}{0in}
  \setlength{\topmargin}{-0.5in}
\def\weakstar{\buildrel\star\over\rightharpoonup}
\def\e{{\varepsilon}}
\def\O{{\Omega}}
\def\o{{\omega}}
\def\x{{\times}}

\def\weak{\rightharpoonup}

\def\F{{\sl F}}
\def\G{{\Gamma}}

\def\U{{\mathcal U}}

\def\R{{\cal R}}
\def\RI{{\mathbb R}}

\newtheorem{Theorem}{Theorem}[section]
\newtheorem{Lemma}[Theorem]{Lemma}
\newtheorem{Proposition}[Theorem]{Proposition}

\newtheorem{Remark}[Theorem]{Remark}
\newtheorem{Definition}[Theorem]{Definition}
\newcommand{{\rr}}{{\mathbb R}}

\newcommand{\N}{{\mathbb N}}
\newenvironment{@abssec}[1]{%
     \if@twocolumn
       \section*{#1}%
     \else
       \vspace{.05in}\footnotesize
       \parindent .2in
         {\upshape\bfseries #1. }\ignorespaces
     \fi}
     {\if@twocolumn\else\par\vspace{.1in}\fi}
\newcommand\keywordsname{Key words}
\newcommand\AMSname{AMS subject classifications}

\newcommand{\domb}{(\O^b;\RI^3)}

\newcommand{\hin}{\hbox{ in }}

\newcommand{\RR}{\mathbb R}

\begin{document}

\title{ Asymptotic analysis of a clamped thin multidomain allowing for fractures and discontinuities}
\author{Gabriel Carvalho\footnote{Instituto Superior T\'{e}cnico, University of Lisbon,  Av. Rovisco Pais $1049-001$ Lisboa, e-mail: gabriel.v.carvalho@tecnico.ulisboa.pt },  $\,$ Jos\'e Matias \footnote{Department of Mathematics, Instituto Superior T\'{e}cnico, University of Lisbon, Av. Rovisco Pais $1049-001$ Lisboa, email: jose.c.matias@tecnico.ulisboa.pt}, $\,$ and Elvira
Zappale\footnote{Department of Basic and Applied Sciences for Engineering, Sapienza -  University of Rome, via
A. Scarpa,  16 (00161) Roma (SA), e-mail:
elvira.zappale@uniroma1.it}}\date{ } \maketitle
\begin{abstract}   We consider a thin  multidomain of $\RI^3,$ consisting of
a vertical rod upon a horizontal disk. The equilibrium configurations of the thin hyperelastic 
multidomain, allowing for fracture and damage, are described by means of a  bulk energy density of the kind
$W(\nabla U)$, where $W$ is a Borel function with linear growth 
and $\nabla U$ denotes the gradient of  the displacement, i.e. a
vector valued function $U:\Omega \to \mathbb R^3$. By assuming that the two volumes tend to zero,  under suitable
boundary conditions and loads, and suitable assumptions of the rate of convergence of the two volumes, we prove that the limit model is well posed
in the union of the limit domains, with
 dimensions, respectively, $1$ and $2$.

\medskip
\noindent Keywords: junction in thin
multidomains, beam, wire, thin film,  bounded variation,
dimension reduction, $\G$-convergence.
\par
\noindent2020 \AMSname: 49J45, 74B20, 74K10, 74K20, 74K30, 74K35, 78M30,
78M35.
\end{abstract}
\section{Introduction}

Thin structures arise in many applications, such as crystal plasticity, modeling of magneto-elastic devices, ribbons, heterogeneous multi-structures, etc. and their study is usually 
addressed through dimensional reduction techniques, where an 
asymptotic analysis from high dimensions to lower ones is conducted. In this process, one looks to
find precise relations between the initial models and the limit
ones (see among a much wider literature \cite{ABP}, \cite{AT}, \cite{BZZ}, \cite{BZ}, \cite{BZ2}, \cite{BFF}, \cite{BDE}, \cite{BDV}, \cite{Cal}, \cite{CMMO},\cite{C2}, \cite{DV},\cite{EKK}, \cite{EPZ}, \cite{FORS}, \cite{FRP}, \cite{FRJM},
\cite{LeDR1}, \cite{ M, MS00, MS0, MS}, 
  and the references therein).
For a recent survey with an analytical perspective we refer to \cite{BDFFFLM}, while we refer to \cite{C1, C2} for the basis of the theory.
Following the dimension reduction models for the junction of two thin cylinders that have been addressed previously in \cite{FZ}, 
\cite{GGLM1},
\cite{GMMMS1},\cite{GMMMS2},
\cite{GPP},\cite{G1}, \cite{HL},\cite{K}, \cite{KMM} 
and \cite{LeD},
we consider a thin multidomain of
$\RI^3$
 with a  bulk energy density of the
kind $W(\nabla U)$, where $W$ is a Borel function  with linear growth, and $\nabla U$ denotes the gradient of a field
function $U \in W^{1,1}$. By assuming that the volumes of the two cylinders
tend to zero with different assumptions on their rates, under suitable boundary conditions on
the top of the vertical cylinder and on the lateral surface of the
horizontal cylinder, and under suitble body loads, we derive the limit energy in the space $BV$, allowing for discontinuities, damage and fractures, thus generalizing to the case of multistructures,  the results contained in \cite{BZZ}, leaving for a forthcoming paper the bending effects. We emphasize that fracture may appear in the limit model. Moreover, in contrast with \cite{BF, CMMO}, in this work we did not consider any initial surface energy.
More precisely, being $r_n$ the radius of the top cylinder and $h_n$ the height of the bottom cylinder and $\ell = \lim_{n \to \infty}\frac{h_n}{r_n^2}$, we 
analyse the cases $\ell \in ]0, +\infty[, \ell = 0$ and $\ell = +\infty$ (cf. \eqref{loadq}, \eqref{loadinfty} and \eqref{load0}). In this last case,  due to our techniques \color{black}, we consider only some specific setting, i.e. we deal with two different situations, namely the case where $\lim_{n\to \infty}\frac{h_n}{r_n} = 0$ and $\lim_{n\to \infty}\frac{h_n}{r_n} = \infty$ (we refer to Section \ref{MR} to more comments on this point).

Clearly the different values of $\ell$ lead to very different limit models, and despite the fact the initial thin $3D$ multidomain is a whole with a vanishing common interface between the top and bottom cylinder, in the limit process in most of the regimes described by $\ell$ one might end with two decoupled domains, namely neither a condition appear in deformations at the common point between the $1D$ domain above and the $2D$ domain below in terms of the limit function spaces nor an energy penalization at this point (which is the expected modeling for fields with free discontinuities), see Theorem \ref{generalrepresentation}. \color{black}
In any situation described above by $\ell$,  we prove that
the limit problem is well posed in the union of the limit domains,
with
 dimensions  $1$ and $2$, respectively. Applications of this model problem, in the case $\ell \in ]0,+\infty[$, can be found in \cite{GZ}.
 \\
 Weakly star converging sequences in the Sobolev space $W^{1,1}$ may develop concentrations and, when dealing with linear growth integrands, the effect is captured in the limit result in BV, through the recession function of the integrand. There are different notions of recession function but, when addressing lower semicontinuity and relaxation problems in $BV$ for integrands with linear growth  (cf. \cite{KR}), it is adequate to  define its recession function as a $limsup$ (see definition \eqref{S101rec}).

Our work relies on $\Gamma$-convergence techniques ( see \cite{DM} for a compreensive treatment) and on \cite{BFMTraces}, where the problem of relaxation of functionals in BV under trace constraints was first addressed in a wide generality. In particular, since our main focus consists in the modeling of the thin multidomain, we impose assumptions on our energy density completely analogous to the ones in \cite{BFMTraces}, but less restrictions could be imposed on the recession functions (see Remark \ref{refremweak}). \color{black} 
For the sake of completeness, we finally address the super-linear case (i.e. deformations $U \in W^{1,p},\; p >1$, $W$ with growth of order p ) for the cases $\ell = 0$ and $\ell = +\infty$, which were not addressed previously (cf. \cite{GZNODEA}).
The overall plan of this work is as follows.
In the following subsection, after having introduced the problem in a
thin multidomain, we reformulate it on a fixed domain through
appropriate rescalings of the kind proposed by P.G. Ciarlet and P.
Destuynder in \cite{CD} and we state our main result, Theorem \ref{generalrepresentation}.  In section 2 we fix notation and state and prove some preliminary results, while section 3 is devoted to the proof of our main theorem. Finally, in section 4 we briefly present some results in the super-linear case. 


\subsection{The original problem and the rescalings}\label{The re-scalings}
In the sequel,  $\mathbb{R}^{h
\x k}$, denotes  the set of $h\x
k$-matrices. In our setting, $h=3$, $k=1,2,3$ and $x=(x_1,x_2,x_3)
=(x_\alpha,x_3)$ denotes a generic point of $\mathbb
R^3$; the gradient with
respect to the first $2$ variables is denoted by $\nabla_{\alpha}$
while the first
derivative with respect to the last variable  is represented
 by $\nabla_3$. 

We write a generic element  in \(\mathbb{R}^{3\x 3}\)   as  \(M=(m_{i,j})_{1\leq i,j\leq 3}\). For our purposes, it is convenient to decompose a given
matrix  \(M=(m_{i,j})_{1\leq i,j\leq 3}\)
as follows: we set   
$M_\alpha :=(m_{i,j})_{1\leq i \leq 3,1\leq j\leq 2}
\in \mathbb{R}^{3\x
2}$, $M_3 =(m_{i,3})_{1\leq i\leq 3}\in \mathbb R ^3$.
\begin{equation}\label{matrixdef}
\begin{aligned}
M=\left(
M_\alpha \;\;| \;\; M_3
\right),
\end{aligned}
\end{equation}
In particular if $Id$ denotes the identity in $\mathbb R^{3\times 3}$, by $\mathbb R^{3\times 2}\ni Id_\alpha$ we denote its first two columns.
Further, \(|M|\) denotes the norm of \(M\) given by $ \big(\sum_{i,j=1}^3 |m_{i,j}|^2\big)^{1/2}.$

Let $\omega \subset\RI^2$ be a  bounded, open, and connected
set, with Lipschitz boundary,
such that the origin in $\RI^2$, denoted by $0_\alpha$, belongs
to
$\omega$.  Furthermore for the sake of exposition we assume that $\mathcal L^2(\omega)=1$. Let $\left(r_n\right)_{n \in \mathbb N}$,
$\left(h_n\right)_{n \in \mathbb N}\subset]0,1[$ be two
sequences such that
\begin{equation}\label{hrzero}
\lim_{n }h_n=0= \lim_{n }r_n,
\end{equation}
and 
\begin{equation}\label{ell}
\lim_{n} \frac{h_n}{r_n^2}=\ell \in [0,+\infty].
\end{equation}
For every  $n \in \mathbb N$, consider
a thin multidomain consisting of a union between two vertical
cylinders, one
placed upon the other, as follows:  $\Omega_n=\Omega_n^a\cup\Omega_n^b$
($a$ stands for
``above" and $b$ for ``below"), where  $\Omega_n^a=r_n\omega\times
[0,1[$ with
small cross section $r_n\omega$ and constant height,
$\Omega_n^b=\omega\times ]-h_n,0[$ with small thickness $h_n$
and
constant cross section (see Figure \ref{fig:domain1}).

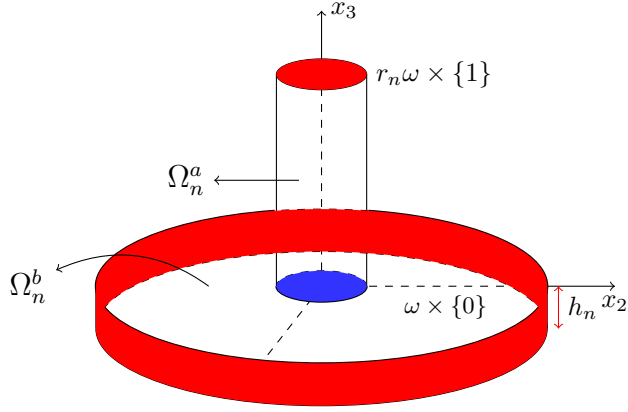
\begin{figure}[ht]
\centering
\tdplotsetmaincoords{70}{90}
    \begin{tikzpicture}[tdplot_main_coords,scale=3]
    
        \tikzstyle{junction}=[fill=blue!80,fill opacity = 0.5]
        \tikzstyle{dirichlet}=[fill=red,fill opacity=0.5]
        \tikzstyle{side}=[fill=green,fill opacity=0.5]
        \def\hn{0.2}
        \def\rn{0.2}
        \coordinate (O) at (0,0,0);
        \coordinate (H) at (0,0,1);
        \coordinate (Bn) at (0,0,-\hn);
        \coordinate (Hn) at (0,0,\hn);
        \coordinate (A) at (0,-1,0);
        \coordinate (C) at (0,1,0);

        \node at (O) {$\bullet$};
        \draw[dashed] (O) -- (0,0,1);
        \draw[->] (0,0,1) -- (0,0,1.3) node[anchor=west]{$x_3$};
        \draw[dashed] (O) -- (0,1,0);
        \draw[->] (0,1,0) -- (0,1.3,0) node[anchor=north ]{$x_2$};
        \tdplotsetrotatedcoords{-15}{0}{0}
        \draw[dashed,tdplot_rotated_coords] (O) -- (1,0,0);
        \draw[->,tdplot_rotated_coords] (1,0,0) -- (1.3,0,0) node[anchor=east]{$x_1$};

        \tdplotdrawarc[thick]{(O)}{\rn}{-90}{90}{}{};
        \tdplotdrawarc[tdplot_main_coords,dashed]{(O)}{\rn}{90}{270}{}{};
        \draw (H) circle (\rn);
        \draw ($\rn*(A)$) -- ($\rn*(A)+(H)$);
        \draw ($\rn*(C)$) -- ($\rn*(C)+(H)$);

        \tdplotdrawarc{(Bn)}{1}{-90}{90}{}{};
        \tdplotdrawarc[tdplot_main_coords,dashed]{(Bn)}{1}{90}{270}{}{};
        
        \tdplotdrawarc[thick]{(O)}{1}{-168}{168}{}{};
        \tdplotdrawarc[tdplot_main_coords,dashed]{(O)}{1}{168}{192}{}{};

        \draw (A) -- ($(A)-(Hn)$);
        \draw (C) -- ($(C)-(Hn)$);

        \node at (0,0.55,-0.1) {\small$\omega\times\{0\}$};
        \fill[junction] (O) circle (\rn);

        \draw[<->,color=red] ($(C) + (0,0.05,0)$) -- ($(C)-(Hn)+ (0,0.05,0)$) node[anchor=south west,black]{$h_n$};
        \node at (H) {$\bullet$};
        \fill[dirichlet] (H) circle (\rn) node[right=6mm,black,text opacity=1]{$\tiny r_n\omega\times\{1\}$};

        \fill[dirichlet] (0,1,0) -- (0,1,-\hn) arc (90:270:1) -- (0,-1,0) arc (270:90:1);
        \fill[dirichlet] (0,1,0) -- (0,1,-\hn) arc (450:270:1) -- (0,-1,0) arc (270:450:1);

        \node (Ob) at (0,-1.3,0) {\large$\Omega^b_n$};
        \draw[] (0,-0.5,0) edge[bend right,->] node {} (Ob);
        
        \node (Oa) at (0,-0.6,0.5) {\large$\Omega^a_n$};
        \draw[] (0,-0.1,0.5) edge[right,->] node {} (Oa);

    \end{tikzpicture}

\caption{Multidomain $\Omega_n$}
\label{fig:domain1}
\end{figure}

%
%

We  consider a Borel energy density \(W :  \mathbb{R}^{3\x 3}\to 
\mathbb{R}\)
 satisfying the following
assumptions:
\begin{align}
& \frac{1}{C} |M| -C  \leq W(M) \leq C(1+|M|),\quad\forall M\in \mathbb{R}^{3\x
3}; \label{coerci}
\end{align}
\noindent for some \(C>0\). 


%

\noindent 

For every $n\in \mathbb N$, the energy
functionals we are interested in are of the form \(I_n:W^{1,1}(\Omega_n;\mathbb R^3)\to \mathbb R\) defined by
\begin{equation}\label{principal energy}
\begin{aligned}I_n[U_n]:=  &\int_{\Omega_n} W\left(\nabla U_n(x)\right) dx + \int_{\Omega_n}H_n^{{\ell}} (x)\cdot U_n(x) dx= \\
&\int_{\Omega_n}
\left( W\left(\nabla_{\alpha}U_n(x) \;\; | \;\; \nabla_3 U_n(x)\right)+ H_n^{{\ell}} (x)\cdot U_n(x)\right) dx,
\end{aligned}
\end{equation}
where $H_n^{\ell} \in L^\infty(\Omega_n;\mathbb R^3)$ and the last summand represent the loads, that will be specified in the sequel, according to the considered scaling.

In addition, we require the admissible deformation  $U_n\in W^{1,1}(\Omega_n;\mathbb R^d)$ to satisfy the
Dirichlet boundary condition $c^{a, \ell}+ d^{a, \ell}x_\alpha$ on the top of
$\Omega_n^a$, and $f^{b, \ell}+  g^{b, \ell}x_3$ on the lateral surface of
$\Omega_n^b$, for some $c^{a, \ell}\in \mathbb{R}^3$, $d^{a, \ell}\in
\mathbb{R}^{3\times 2}$ 
and $f^{b, \ell}, g^{b, \ell}\in W^{1,1}(\omega;\mathbb R^3)$.
 In Remarks \ref{general_dirichlet1}, \ref{general_dirichlet2} and \ref{general_dirichlet3} below we make more detailed comments about the conditions we impose on the deformations at the boundary.
\color{black}

More general loads can be imposed, but for the sake of exposition we consider the above simplified case. We refer to Remark \ref{remloads} below for a more detailed discussion. \color{black}

Our main goal is to study the asymptotic behavior as \(n\to\infty\) of the minimization problem
\begin{equation}\label{eq:original}
\inf_{\begin{array}{cc}U_n\in W^{1,1}(\Omega_n;\mathbb
R^3) \\ {\hbox{under 
the above boundary conditions}}
\end{array}} I_n[U_n].
\end{equation}

As usual in dimension-reduction problems, we reformulate this
minimization problem on a fixed
domain through an appropriate rescaling  which maps $\Omega_n$ into $\Omega :=\omega \times
]-1,1[$. 

 Precisely,  we set
\begin{equation*}
 u_n(x)=\left\{
\begin{array}{ll} u^{a}_n(x_\alpha,x_3)=
 U_n(r_nx_\alpha,x_3),\quad (x_\alpha,x_3)\in\Omega^a :=\omega\times [0,1[, \\\\
 u^{b}_n(x_\alpha,x_3)= U_n(x_\alpha,h_n x_3), \quad
(x_\alpha,x_3)\in\Omega^b :=\omega\times ]-1,0[.
\end{array}\right.
\end{equation*}

\noindent Then, $ u^a_n\in
W^{1,1}(\Omega^a;\mathbb R^3)$ assumes the rescaled Dirichlet boundary condition
$c^{a,\ell} +r_nd^{a,\ell} x_\alpha$ 
on the top of $\Omega^a$, while $ u^b_n\in
W^{1,1}(\Omega^b;\mathbb R^3)$ assumes the rescaled Dirichlet boundary condition
$f^{b,\ell} +h_ng^{b,\ell}x_3$ 
on the lateral boundary of $\Omega^b$.
 Moreover,
 $ u_n=( u^a_n, u^b_n)$  satisfies
the 
junction condition 

\begin{equation}\label{eq:bc}
 u^a_n(x_\alpha,0)= u^b_n(r_nx_\alpha,0), \hbox{ for a.e.
}x_\alpha\in\o,
\end{equation}
In other words, \(u_n \in\mathcal{U}_n^\ell \), where 
\begin{align}\label{Vn}
\mathcal{U}^{\ell}_n=\Big\{ &(u^a, u^b) \in \left(c^{a,\ell}+r_nd^{a,\ell}x_\alpha 
+W_a^{1,1}
(\Omega^a;\RR^3)\right)\x \left(f^{b,\ell}+h_n g^{b,\ell}x_3
+W^{1,1}_b(\O^b;\RR^3)\right):
\\ &\qquad 
u^a \text{ and } u^b \text{ satisfy \eqref{eq:bc}}\Big\} \nonumber
\end{align}
with  $W_a^{1,1}(\Omega^a;\mathbb
R^3)$  the
closure with respect to the strong topology of $W^ {1,1}(\Omega^a;\mathbb R^3)$ of 
$$\left\{u^a\in C^\infty(\overline{\Omega^a};\RR^3)\,:\,
 u^a=\mathbf{0} \hbox{ in a  neighbourhood of }\omega\x\{1\}\right\}$$
 and respectively
  $W_b^{1,1}(\Omega^b;\RR^3)$  is the closure with respect to the strong topology of $W^{1,1}(\Omega^b;\mathbb R^3)$ of 
  $$\big\{u^b\in C^\infty(\overline{\Omega^b};\RR^3)\,:\,
 u^b={\bf 0} \hbox{ in a neighbourhood of } \partial\omega\times]-1,0[\big\}.$$
 
 Without loss of generality, one can assume that
 \begin{equation}\label{f=0=g}f^{b,\ell}={\bf 0}\hbox{ a.e. in }B,
 \end{equation}
for some $2$-dimensional ball $B$ such that $0_\alpha\in B\subset
\subset \omega$ ($\bf 0$ and $0_\alpha$ denote the null vectors in $\mathbb R^3$ and $\mathbb R^2$, respectively).

We consider now three functionals, depending on the superscript $\ell$ in \eqref{ell}, as rescalings of the energy $I_n$ in \eqref{principal energy}, and study their asymptotic behaviour.

\begin{Definition}
    \label{def:energies}
    Let $\ell \in [0,+\infty]$ be as in \eqref{ell}, i.e. the limit ratio between the height of the cylinder $\Omega^b$ and the area of the cross section of the cylinder $\Omega^a$.
     
    For $\ell\in\{q,\infty,0\}$ with $q\in]0,+\infty[$, let $F_n^\ell: \mathcal U_n^\ell \to \mathbb R$ be defined as follows:
\begin{alignat}{2}
    &F_n^q:\mathcal U_n^q \to \mathbb R, \quad &&F_n^q( u^a_n, u^b_n) := K_n^{a,q}( u^a_n)+ \frac{h_n}{r_n^{2}}K_n^{b,q}(u^b_n), 
    \label{eq:Fnq}
    \\
    &F_n^\infty:\mathcal U_n^\infty \to \mathbb R, \quad 
    &&F_n^\infty( u^a_n, u^b_n) := K_n^{a,\infty}( u^a_n)+ \frac{h_n}{r_n^{2}}K_n^{b,\infty}(u^b_n), 
    \label{eq:Fninfty}
    \\
    &F_n^0:\mathcal U_n^0 \to \mathbb R, \quad &&F_n^0( u^a_n, u^b_n) := \frac{r_n^{2}}{h_n}K_n^{a,0}( u^a_n)+ K_n^{b,0}(u^b_n), 
    \label{eq:Fn0}
\end{alignat}
where $\mathcal U_n^q$, $\mathcal U_n^\infty $ and $\mathcal U_n^0 $ are as in \eqref{Vn} with

\begin{alignat}{4}
    \label{dirichletq}
    &c^{a, q}\in \mathbb{R}^3, \quad &&d^{a,q}=({\bf 0}, {\bf 0}) \in \mathbb R^{3\times 2}, \quad &&f^{b, q}\in W^{1,1}(\omega;\mathbb R^3), \quad &&g^{b, q}={\bf 0}\in \mathbb R^3,
    \\
    \label{dirichletinfty}
    &c^{a, \infty}\in \mathbb{R}^3, \quad &&d^{a,\infty}=({\bf 0}, {\bf 0}) \in \mathbb R^{3\times 2}, \quad &&f^{b, \infty}(x_\alpha)=(x_\alpha,0), \quad &&g^{b, \infty}=(0_\alpha,1),
    \\
    \label{dirichlet0}
    &c^{a, 0}=(0_\alpha,1), \quad &&d^{a,0}= Id_\alpha,
    \quad &&f^{b, 0}\in W^{1,1}(\omega;\mathbb R^3), \quad &&g^{b, 0}={\bf 0} \in \mathbb R^{3},
\end{alignat}
and where
\(K_n^{a,\ell}:  W^{1,1}(\Omega^a;\RR^3)\to \mathbb R\) and 
\(K_n^{b,\ell}:  W^{1,1}(\Omega^b;\RR^3)\to \mathbb R\) are the
functionals defined, respectively, by
\begin{equation}\label{kna}
K_n^{a,\ell}[ u^a] := \int_{{\Omega}^a}
\left( W\left(\displaystyle{\frac{1}{r_n}\nabla_{\alpha}u^{a}(x)} \;\; | \;\;
\displaystyle{\nabla_{3}u^{a}}(x)\right)+ H_n^{a,\ell}(x) \cdot u^a(x)\right)dx,
\end{equation}\medskip

\begin{equation}\label{knb}
K_n^{b,\ell}[ u^b]:= \int_{{\Omega}^b}
\left(W\left(\displaystyle{\nabla_{\alpha}u^b(x)} \;\; | \; \;
\displaystyle{\frac{1}{h_n}\nabla_{3}u^b(x)}
\right)+ H_n^{b,\ell}(x)\cdot u^b(x)\right)dx,
\end{equation}
with $H_n^{a,\ell}(x):= H_n^\ell(r_n x_\alpha, x_3)$ if $(x_\alpha, x_3) \in \Omega^a$,
 and $H_n^{b,\ell}(x):= H_n^\ell(x_\alpha, h_n x_3)$ if $(x_\alpha,x_3) \in \Omega^b$.

\end{Definition}

\noindent 
Finally, the rescaled minimization problems corresponding to \eqref{eq:original}
read as \begin{equation}\label{minimumproblem_n}
\inf_{(u^a_n, u^b_n)\in \mathcal{U}_n^\ell}\left\{F_n^\ell(u_n^a,u_n^b) \right\}.
\end{equation}
In other words, the aim of this paper consists of describing the limit energies  in
\eqref{eq:Fnq}, \eqref{eq:Fninfty} and \eqref{eq:Fn0} as $n\to \infty$, when the volumes of
 $\Omega_n^a$ and $\Omega_n^b$ tend to zero under different assumptions on their rate (modelled through the superscript $\ell\in\{q,0,\infty\}$). In particular, according to the values of $\ell$ in \eqref{ell}, we will make specific assumptions on the loads.
 
  \begin{alignat}{2} 
        \hbox{ if } \ell=&
        \,q\in\,\,]0,+\infty[, \qquad && \left\{  \begin{array}{cc}
             H_n^{a,q} \weakstar H& \hbox{ in } L^\infty(\Omega^a;\RI^3), \label{loadq} \\
             H_n^{b, q} \weakstar H& \hbox{ in } L^\infty(\Omega^b;\RI^3),
        \end{array}\right.
        \\
      \hbox{ if }  \ell=&
        +\infty, && \left\{  \begin{array}{cc}
             H_n^{a,\infty} \weakstar H &\hbox{ in } L^\infty(\Omega^a;\RI^3),  \label{loadinfty}\\
             \displaystyle\frac{h_n}{r_n^2}H_n^{b, \infty} \weakstar {\bf 0}& \hbox{ in } L^\infty(\Omega^b;\RI^3),
        \end{array}\right.
        \\
       \hbox{ if } \ell=& 
        0, &&\left\{  \begin{array}{cc}
             \displaystyle\frac{r_n^2}{h_n}H_n^{a,0} \weakstar {\bf 0} &\hbox{ in } L^\infty(\Omega^a;\RI^3),  \\
             H_n^{b,0} \weakstar H & \hbox{ in } L^\infty(\Omega^b;\RI^3).
        \end{array}\right. \label{load0}
    \end{alignat}
where $H\in L^\infty (\O;\mathbb R^{3})$. 

Our analysis will be conducted by means of $\Gamma$-convergence (referring to \cite{DM} for a comprehensive treatment), with respect to the weak * convergence in $BV(\Omega;\mathbb R^3)$, which in view of Propositions 
\ref{Compactnessres1}, \ref{Compactness_infty}, and \ref{Compactness_0},  is the natural one.
We also stress that, due to the growth condition \eqref{coerci} on $W$, there is no loss of generality in replacing this convergence by the $L^1(\Omega;\mathbb R^3)$ - strong convergence. 
In particular we observe that
\begin{Remark}\label{remloads}

    The terms in $F_n^\ell(u_n^a,u_n^b)$ regarding the loads are continuous with respect to $\Gamma$-convergence, i.e. they converge to the same value independently of the approximant sequences $(u_n^a,u_n^b)\in\U_n^\ell$ strongly converging in $L^1$ to $(u^a, u^b)$, if we consider
   the convergences \eqref{loadq}, \eqref{loadinfty}, \eqref{load0}.

    For instance if $\ell=q$, then    $$
    \lim_n\int_{\Omega^a} H_n^{a,q} \cdot u_n^a dx = \int_{\Omega^a} H \cdot u^a dx, \quad \lim_n\int_{\Omega^b} H_n^{b,q} \cdot u_n^b dx = \int_{\Omega^b} H \cdot u^b dx,
    $$
    along any sequences $u_n^a\to u^a$ in $L^1(\Omega^a;\RI^3)$ and $u_n^b\to u^b$ in $L^1(\Omega^b;\RI^3)$.

   For the remaining cases, $\ell=+\infty$ and $\ell=0$ a similar reasoning can be made.

   In view of our latter observation, for which the loads represent continuous terms with respect to $\Gamma$-convergence, we can neglect them in our subsequent analysis.
Clearly the chosen topology to be fully meaningful, it is needed a compactness result for functional bounded sequences. \color{black}
 We underline that,  for the sake of exposition, in the proof of Propositions \ref{Compactnessres1}, \ref{Compactness_infty}, and \ref{Compactness_0}, we consider null loads, referring to the arguments in \cite[Lemma 2.2]{GGLM1} and \cite[Proposition 5.1]{GauZa} for the case of not null loads. We stress that in this case  either we can consider the $L^\infty$ bounds on the loads in accordance with the coercivity constant in \eqref{coerci} in order to get energy bounded sequence, after having exploited Poincar\'e inequality.
   It is also worth to mention that the loads could be considered in divergence form as in \cite[Section 6.2]{FZ} which we refer to, for the Sobolev setting, i.e. 
   \begin{equation*}
   {\rm div}\,{h^{a,\ell}_n}= H^{a,\ell}_n \hbox{ in } \Omega^a \hbox{ and }{\rm div}\,{h^{b,\ell}_n}= H^{b,\ell}_n \hbox{ in } \Omega^b, 
   \end{equation*} 
   under suitable conditions on the regularity of $h^{a,\ell}_n$ and $h^{b,\ell}_n$, and suitable bounds on the $L^\infty$ norm of $h^{a,\ell}_n$ and $h^{b,\ell}_n$, still to be compatible with the coercivity constant of $W$. 
   We leave this case for future work since, by divergence theorem, it will naturally involve the presence of surface forces on the boundary of our multidomain, thus, naturally calling for the analysis of bending effects. On the other hand, if one neglects any surface contribution, the assumption  that loads are in divergence form, will lead naturally to replace terms of the type $\int_{\Omega^a} H^{a,\ell}_n \cdot u^a_n dx$ and  $\int_{\Omega^b} H^{b,\ell}_n \cdot u^b_n dx$ by terms of the form $\int_{\Omega^a} h^{a,\ell}_n \cdot \nabla u^a_n dx$ and $\int_{\Omega^b} h^{b,\ell}_n \cdot \nabla u^b_n dx$, respectively, for which we still have convergence by duality.

For the $\Gamma$- convergence in itself, one can clearly start the asymptotic analysis considering $L^1$ strong converging sequences $(u_n^a)_n$ and $(u_n^b)_n$. \color{black}

\end{Remark}

The following result summarizes the content of this paper. We refer to Section \ref{pre} for the notation adopted in the theorem to represent the derivatives of fields with Bounded Variation. 
We also underline that, with an abuse of notation we identify fields $u^a \in BV(\Omega^a;\mathbb R^3)$ such that $D_\alpha u^a=({\bf 0}, {\bf 0})\in \mathbb R^{3 \times 2}$, with fields $u^a \in BV(]0,1[;\mathbb R^3)$. Analogously we identify $u^b \in BV(\Omega^b;\mathbb R^3)$ such that $D_3 u^b={\bf 0} \in \mathbb R^3$, with fields in $BV(\omega;\mathbb R^3)$.

\begin{Theorem}
    \label{generalrepresentation}
    Let $W\;:\; \RI^{3\x3}\to \RI$ be a Borel function satisfying \eqref{coerci}, and $F_n^\ell$ be as in Definition \ref{def:energies} depending on the superscript $\ell$ defined in \eqref{ell}. Denote by $\hat W:{\mathbb R}^3\to \mathbb R$ the function  given by

    \begin{align}\label{hatW}
        \hat W(\xi_3):=\inf_{\xi_\alpha \in \mathbb R^{3\times 2}} W(\xi_\alpha| \xi_3),
    \end{align}  
    and let $W_0:\mathbb R^{3\times 2}\to \mathbb R$ be the function defined 
    \begin{align}\label{W0}
        W_0(\xi_\alpha):=\inf_{\xi_3 \in \mathbb R^{3}} W(\xi_\alpha| \xi_3).
    \end{align}
    
    \noindent Assume that the recession functions of $\hat W$ and $W_0$, 
     $\hat W^\infty:\mathbb R^{3}\to \mathbb R$, and $W_0^
\infty: \mathbb R^{3\times 2} \to \mathbb R$, defined as
                \begin{equation}\label{S101rec}
                        \hat W^\infty(\xi):=\limsup_{ t\to+\infty} \frac{\hat W(t\xi)}{t}, \hbox{ and } W_0^\infty(\xi):= \limsup_{t \to +\infty}\frac{W_0(t \xi)}{t},
                \end{equation}
                respectively, 
 satisfy the following: there exist $C>0,  0<m<1, L>0$ such that
    \begin{align}\label{Whatrec}
        \left|(\hat W)^\infty(\xi_3)- \frac{\hat W(t\xi_3)}{t}\right| \leq \frac{C}{t^m}
    \end{align}
        for $\xi_3 \in \mathbb R^{3}, \|\xi_3\|=1$, $t >L$,
        and
        \begin{align}\label{W0rec}
        \left|W_0^\infty(\xi_\alpha)- \frac{W_0(t\xi_\alpha)}{t}\right| \leq \frac{C}{t^m}
        \end{align}
        for $\xi_\alpha \in \mathbb R^{3\times 2}, \|\xi_\alpha\|=1$, $t >L$.
    Let \(\mathcal{U}_n^\ell\) be the space in \eqref{Vn}. 
    \noindent Define further $K^{a,{\ell}}:BV(]0,1[;\mathbb R^3)\to\RI$ and $K^{b,{\ell}}:BV(\o;\mathbb R^3)\to\RI$ as
    \begin{align}
        K^{a,\ell}(u^a):=
        \displaystyle{\int_{]0,1[}{\hat W}^{**}\left(
        \nabla_{3}u^a(x_3)\right)}dx_3 &+ \displaystyle{\int_{]0,1[}({\hat W}^{**})^{\infty}\left(\frac{d D^s_3 u^a}{d |D^s_3 u^a|},\right)}d |D^s_3 u^a| 
        \nonumber
        \\
        \label{def:Ka}
        &+ (\hat W^{\ast \ast})^\infty (c^{a,\ell}- u^a(1^-))
        +\int_{\Omega^a} H(x)\cdot u^a(x_3) dx, 
    \end{align}
    \begin{align}
        K^{b,\ell}(u^b):=
        &\displaystyle{\int_{\omega}Q W_0\left(\nabla_{\alpha }u^b(x_\alpha)\right)dx_\alpha} +\int_{\omega}(QW_0)^{\infty}\left(\frac{D^s_\alpha u^b}{|D^s_\alpha u^b|}\right)d |D^s_{\alpha}u^b|+
        \nonumber
        \\
        &+\int_{\partial \omega \times ]-1,0[} (QW_0)^\infty((f^{b,\ell}-(u^b)^-)\otimes \nu_{\partial \omega \times ]-1,0[})d \mathcal H^2 + \int_{\Omega^b} H(x)\cdot u^b(x_\alpha) dx,
        \label{def:Kb}
    \end{align}
    where  $u^a(1^-)$ represent the internal trace of $u^a$, $Q (W_0)$ and $\hat W^{\ast \ast}$ represent the quasiconvex and the convex envelopes (see subsection \ref{convnot}) of $W_0$ and $\hat W$ in \eqref{W0} and \eqref{hatW}, respectively, and the superscript $\infty$ stands for their recession functions in the sense of \eqref{S101rec}.

    Then the following representations hold:

    \vspace{5mm}
    \textbf{Case $\ell=q$}:
    Assume that $q\in]0,+\infty[$, that the Dirichlet conditions in $\U_n^q$ are as \eqref{dirichletq} and the loads satisfy \eqref{loadq}. Then for every  $(u^a, u^b) \in BV(]0,1[;\mathbb R^3) \x BV(\omega;\mathbb R^3)$, 
    \begin{align}\label{probl_q}
        \inf\Big\{&\liminf_n F_n^q(u_n^a,u_n^b): (u_n^a, u_n^b)\in \mathcal U_n^q,
        \nonumber\\
        &u_n^a \weakstar u^a \hbox{ in }BV(\Omega^a;\mathbb R^3), 
        u_n^b  \weakstar u^b \hbox{ in }BV(\Omega^b;\mathbb R^3)\Big\}
        = K^{a,q}(u^a) + q K^{b,q}(u^b).
    \end{align}

    \vspace{5mm}
    \textbf{Case $\ell=+\infty$}:
Assume 
that the Dirichlet conditions in $\U_n^\infty$ are as in \eqref{dirichletinfty} and the loads satisfy \eqref{loadinfty}. Assume further that there exists $C>0$ such that  
    \begin{equation}\label{eq:growth1}
\frac{1}{C}|M-Id|\leq W(M), \quad W(Id)=0,
    \end{equation}
    for every $M \in \mathbb R^{3\times 3}$, with $Id$ being the identity matrix in $\mathbb R^{3\times 3}$.
  
    We consider two different subcases.

    \begin{itemize}
        \item[--]
            If $\displaystyle\frac{h_n}{r_n} \to 0$, then for every  $u^a \in BV(]0,1[;\mathbb R^3)$,
            \begin{align}
                \inf\Big\{&\liminf_n F_n^\infty(u_n^a,u_n^b): (u_n^a, u_n^b)\in \mathcal U_n^\infty,  
                \nonumber\\
                &u_n^a \weakstar u^a \hbox{ in }BV(\Omega^a;\mathbb R^3), 
                u_n^b  \to (x_\alpha,0) \hbox{ in }W^{1,1}(\Omega^b;\mathbb R^3)\Big\}
                = K^{a,\infty}(u^a) \label{probl_infty,0}.
            \end{align}

        \item [--]
            If $\displaystyle\frac{h_n}{r_n} \to \infty$ then, for every  $u^a \in BV(]0,1[;\mathbb R^3) $, 
            \begin{align}
                \inf\Big\{&\liminf_n F_n^\infty(u_n^a,u_n^b): (u_n^a, u_n^b)\in \mathcal U_n^\infty,  
                u_n^a \weakstar u^a \hbox{ in }BV(\Omega^a;\mathbb R^3), 
                \nonumber
                \\
                &u_n^b  \to (x_\alpha,0) \hbox{ in }W^{1,1}(\Omega^b;\mathbb R^3)\Big\}
                = K^{a,\infty}(u^a) + (\hat W^{\ast \ast})^\infty (u^a(0^+)).
                \label{probl_infty,infty}
            \end{align}
            where $u^a(0^+)$ is the  internal trace of the function $u^a$ at $0$. 
    \end{itemize}

    \vspace{5mm}
    \textbf{Case $\ell=0$}:
      Assume further  
      that the Dirichlet conditions in $\U_n^0$ are as in \eqref{dirichlet0} and the loads satisfy \eqref{load0}. Assume further \eqref{eq:growth1}. Then, for every  $u^b \in BV(\omega;\mathbb R^3)$, 
    \begin{align}\label{prob_0}
        \inf\Big\{&\liminf_n F_n^0(u_n^a,u_n^b): (u_n^a, u_n^b)\in \mathcal U_n^o,
        \nonumber\\
        &u_n^a \to (0_\alpha,x_3) \hbox{ in }W^{1,1}(\Omega^a;\mathbb R^3), 
        u_n^b  \weakstar u^b \hbox{ in }BV(\Omega^b;\mathbb R^3)\Big\}
        = K^{b,0}(u^b).
    \end{align}

\end{Theorem}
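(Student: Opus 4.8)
The plan is to prove Theorem \ref{generalrepresentation} by establishing, for each regime $\ell \in \{q, 0, +\infty\}$, the two classical inequalities of $\Gamma$-convergence: a liminf (lower bound) inequality and a recovery sequence (upper bound) inequality, using the structure already in place — compactness (Propositions \ref{Compactnessres1}, \ref{Compactness_infty}, \ref{Compactness_0}), the relaxation-with-traces result from \cite{BFMTraces}, and the continuity of the load terms (Remark \ref{remloads}). First I would treat the case $\ell = q$, which is the model situation from which the degenerate cases are derived. Since $F_n^q(u_n^a,u_n^b) = K_n^{a,q}(u_n^a) + \tfrac{h_n}{r_n^2} K_n^{b,q}(u_n^b)$ is a sum of a term depending only on $u_n^a$ and a term depending only on $u_n^b$, and since the junction condition \eqref{eq:bc} decouples in the limit (the trace of $u_n^b$ on a shrinking disk $r_n\omega \times \{0\}$ converges to a point value which imposes no constraint on the one-dimensional limit $u^a$), I expect the infimum over joint recovery sequences to split as $\inf_a + q\inf_b$. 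Thus it suffices to identify the $\Gamma$-limit of each piece separately.

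For the ``$a$'' part, I would observe that the rescaled gradient $(\tfrac1{r_n}\nabla_\alpha u^a | \nabla_3 u^a)$ forces $\tfrac1{r_n}\nabla_\alpha u_n^a$ to be bounded in measure, hence $\nabla_\alpha u_n^a \to 0$, so the limit $u^a$ depends only on $x_3$; on such fields the effective density is precisely $\hat W(\xi_3) = \inf_{\xi_\alpha} W(\xi_\alpha|\xi_3)$ from \eqref{hatW}. The one-dimensional BV relaxation of $\int \hat W(\nabla_3 u^a)$ under the Dirichlet datum $c^{a,q}$ at $x_3 = 1$ is then exactly $K^{a,q}(u^a)$ in \eqref{def:Ka}, by the known relaxation formula in $BV$ with trace term (the convex envelope $\hat W^{**}$ appearing because in one dimension quasiconvexity reduces to convexity; the recession function $(\hat W^{**})^\infty$ governing the singular and boundary parts). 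Symmetrically, for the ``$b$'' part, $\tfrac1{h_n}\nabla_3 u_n^b$ bounded in measure forces $\nabla_3 u^b \to 0$, the limit depends only on $x_\alpha$, the effective density is $W_0$ from \eqref{W0}, and the two-dimensional relaxation with the lateral Dirichlet condition $f^{b,q}$ on $\partial\omega \times ]-1,0[$ gives $K^{b,q}(u^b)$ with the quasiconvex envelope $QW_0$ and its recession function on the boundary term — this is where I would invoke \cite{BFMTraces} directly. The technical assumptions \eqref{Whatrec} and \eqref{W0rec} on the rate of convergence to the recession functions are exactly what is needed to make the recession-function terms behave well under the rescaling and to pass limits in the boundary/singular contributions; I would use them when constructing recovery sequences that saturate the $(\cdot)^\infty$ terms.

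For the degenerate regimes I would argue by comparison with the case $\ell = q$ together with the extra coercivity \eqref{eq:growth1}, which pins the ``stiff'' component to its Dirichlet datum. When $\ell = +\infty$, the factor $\tfrac{h_n}{r_n^2} \to +\infty$ multiplying $K_n^{b,\infty}$ forces, on finite-energy sequences, $K_n^{b,\infty}(u_n^b) \to 0$; combined with \eqref{eq:growth1} and the boundary condition $f^{b,\infty}(x_\alpha) = (x_\alpha,0)$, $g^{b,\infty} = (0_\alpha,1)$, this forces $u_n^b \to (x_\alpha,0)$ strongly in $W^{1,1}$, so only the ``$a$'' energy $K^{a,\infty}(u^a)$ survives — except that when $\tfrac{h_n}{r_n}\to\infty$ the junction region transmits the trace of the rigid lower film to the bottom of the rod, producing the additional term $(\hat W^{**})^\infty(u^a(0^+))$; I would extract this by a careful analysis of the energy concentrated near $x_3 = 0$ on a layer of thickness comparable to $\tfrac{r_n}{h_n}$ (hence the dichotomy $\tfrac{h_n}{r_n}\to 0$ versus $\to\infty$, which controls whether this layer energy vanishes or survives). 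The case $\ell = 0$ is dual: now $\tfrac{r_n^2}{h_n}\to+\infty$ multiplies $K_n^{a,0}$, so with \eqref{eq:growth1} and the datum $c^{a,0} = (0_\alpha,1)$, $d^{a,0} = Id_\alpha$ the rod component is forced to $u_n^a \to (0_\alpha,x_3)$ strongly in $W^{1,1}$ (note this datum is precisely the affine extension of the identity, so the limit is energy-free), leaving only $K^{b,0}(u^b)$.

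The main obstacle I anticipate is the junction analysis in the case $\ell = +\infty$ with $\tfrac{h_n}{r_n}\to\infty$: one must show both the lower bound — that no finite-energy sequence can avoid paying $(\hat W^{**})^\infty(u^a(0^+))$ — and the matching upper bound — a recovery sequence that realizes exactly this cost by interpolating, over the thin junction neck, between the value $u^a(0^+)$ at the bottom of the rod and the value $\mathbf 0$ (the trace forced on the film near the interface by $u_n^b \to (x_\alpha,0)$, which vanishes at $0_\alpha$). This requires a delicate scaling/slicing argument localized at the common point, where the geometry genuinely couples the two reduced dimensions, and where the linear growth of $W$ together with \eqref{Whatrec} is essential so that the cost of a jump-like transition is measured by the recession function rather than blowing up. Everything else — compactness, the splitting of the infimum, the bulk relaxation formulas — I regard as an assembly of the preliminary results of Section \ref{pre} and the machinery of \cite{BFMTraces} and \cite{DM}.
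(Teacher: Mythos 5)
Your plan follows the paper's proof essentially step by step: compactness forcing the dimensional reduction and (via \eqref{eq:growth1}) the rigidity of the stiff component, lower bounds via the pointwise inequalities $W \geq \hat W,\, W_0$ followed by the trace-constrained relaxation of \cite{BFMTraces}, upper bounds via explicit recovery sequences with optimized transverse profiles $z^a, z^b$ on dense Sobolev classes combined with the lower semicontinuity of the $\Gamma$-liminf, and the junction penalization in the $(\infty,\infty)$ regime arising from the forced vanishing of the trace $u_n^a(\cdot,0)$. The only point where the paper's execution differs from your sketch is that the term $(\hat W^{\ast\ast})^\infty(u^a(0^+))$ is not obtained by an explicit neck interpolation but by first proving (through a Fundamental-Theorem-of-Calculus estimate from the lateral boundary applied to $u_n^b-(x_\alpha,h_nx_3)$, followed by an anisotropic rescaling) that the constraint $u^a(0)=\mathbf 0$ survives in the limit, and then relaxing that trace constraint in $BV$ --- again via \cite{BFMTraces}, after a Gagliardo extension that converts it into an interior jump --- which automatically yields the recession-function cost.
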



\begin{Remark}\label{refremweak}

We underline that our analysis could be performed, without requiring  \eqref{W0rec} and \eqref{Whatrec}, presenting other arguments than those in \cite{BFMTraces}, when relaxing from trace constrained Sobolev spaces to $BV$ ones, since our density $W$ does not depend on the potentials $u_n$'s, see \cite{FMZ} for detailed proofs ( also in the second order setting) relying on suitable formulations of Reshetnyak's type theorems for the convergence of measure dependent energies. 
In particular \eqref{W0rec}
 and \eqref{Whatrec} ensure the existence of the so called {\it strong recession functions} (see \cite{KR}) for $Q\hat W$ and $W_0^{\ast \ast}.$

 It is worth to emphasize that, in view of Propositions \ref{Compactnessres1}, \ref{Compactness_infty} and \ref{Compactness_0}, the strong convergences appearing in the definitions of problems \eqref{probl_infty,0}, \eqref{probl_infty,infty}  and \eqref{prob_0} are redundant. We could still consider only the $BV$ weak* convergence as in the problem \eqref{probl_q}.

We also underline that in most of the cases $\ell \in [0,+\infty]$, the limiting energies are  decoupled. In particular, due to the choices of the boundary conditions and to technical assumption on the minimum of $W$ expressed by \eqref{eq:growth1} which forces certain energetic responses, in the  cases $\ell=\infty$ and $\ell=0$ the sample is fixed below and above  respectively, but if $\ell=+\infty$ and $\lim_{n}\frac{h_n}{r_n}=\infty$, then  an energetic penalization appears at the junction point $\bf 0$. Namely, while in the case $\ell=+\infty$ and $\lim_{n}\frac{h_n}{r_n}=0$, the infimizing sequences in the subdomain below can "stretch" near the joining area $r_n\omega\x \{0\}$ and still keep their energy infinitesimal, in the case where  $h_n$ tends "much slower" to $0$ than $r_n^2$ ( i.e. $h_n/r_n\to\infty$) than this stretching does not allow any value because $u_n^b$ is approaching the rigid limit function in such a manner that its values $r_n\omega\x\{0\}$ are in fact tending to ${\bf 0}$ thus leading to the aforementioned penalization.

We also observe that in the some parts of the proof we consider $W$ continuous (Lipschitz), since, in view of Lemma \ref{lemma29}, it can be assumed to be quasiconvex (see \cite{DA}). 
\end{Remark}

\section{Preliminary results}\label{pre}
In this section we fix notation, we collect the main properties of  the space of BV functions that we need to prove our results, recall some convexity notions and prove some preliminary results.

We will use the following notations
\begin{itemize}
\item[-] $O \subset \mathbb R^{h}$ is an open set;
\item[-] ${\mathcal A}(O)$ is the family of all open subsets
of $O $;
\item [-] $\bf 0$ and $0_\alpha$ denote the null vectors in $\mathbb R^3$ and $\mathbb R^2$, respectively;
\item [-] $C^\infty_c(O;\mathbb R^k)\coloneqq\{u\colon U\to\mathbb R^k: \text{$u$ is smooth and has compact support in $O$}\}$; if $k=1$, we just denote this set by $C^\infty_c(O)$;
$C^\infty_0(O)$ and $C^\infty_0(O;\mathbb R^k)$ denote the closures of $C^\infty_c(O)$ and $C^\infty_c(O;\mathbb R^k)$, respectively, in the $\sup$ norm; 
\item[-] $\mathcal M (O)$ and $\mathcal M(O;\mathbb R^{k})$ are the sets of (signed) finite real-valued or vector-valued Radon measures on $O$, respectively; $\mathcal M ^+(O)$ is the set of non-negative finite Radon measures on $O$;
\item[-] given $\mu\in\mathcal M(O)$ or $\mu\in\mathcal M(O;\mathbb R^{k})$, the measure $|\mu|\in\mathcal M^+(O)$ denotes the total variation of $\mu$;
\item[-] $\mathcal L^{h}$ and $\mathcal H^{h-1}$ denote the  $h$-dimensional Lebesgue measure and the $\left(  h-1\right)$-dimensional Hausdorff measure in $\mathbb R^ h$, respectively; the symbol $ dx$ will also be used to denote integration with respect to $\mathcal L^{h}$, while $d {\mathcal H}^{h-1}$ will be used to denote surface integration with respect to $\mathcal H^{h-1}$;

\item[-] $\mathcal S^{h-1}$ denotes the unit sphere in $\mathbb R ^h$;

\item[-] $Q$ will denote the open
unit cube in $\mathbb R^h$ i.e. $Q :=]0, 1[^h$ and, given $\nu \in \mathcal S^{h-1}$, $Q_\nu$ denotes 
some open unit cube with two faces perpendicular to $\nu$.
\item[-] for $x \in \mathbb R^{h}, Q(x, \varepsilon) := x + \varepsilon Q$ and similarly, $Q_\nu (x; \varepsilon) := x + \varepsilon Q_\nu$;
\item[-] $B$ will denote the open unit ball in $\mathbb R^h$ centered at the origin and $B(x, \varepsilon) := x + \varepsilon B$;
\item[-] $\overline B$ will denote the closed unit ball centered at the origin and $\overline B (x, \varepsilon) := x + \varepsilon\overline B$;
\item[-] $\langle \cdot, \cdot \rangle$ denotes the Euclidean scalar product, which will be clear from the context in which vector space acts.
\item[-] where not otherwise specified, $C$ denotes a positive constant which may vary from line to line;

\end{itemize}

\begin{Definition}
 A function $w\in L^{1}(O;{\mathbb{R}}^{k})$ is said to be of
\emph{bounded variation}, and we write $w\in BV(O;{\mathbb{R}}^{k})$, if
all its first distributional derivatives $D_{j}w^{i}$ belong to $\mathcal{M}%
(\Omega)$ for $1\leq i\leq k$ and $1\leq j\leq h$.
\end{Definition}

The matrix-valued measure whose entries are $D_{j}w_{i}$ is denoted by $Dw$
and $|Dw|$ stands for its total variation.
We observe that if $w\in BV(O;\mathbb{R}^{k})$ then $w\mapsto|Dw|(O)$ is lower
semicontinuous in $BV(O;\mathbb{R}^{k})$ with respect to the
$L_{\mathrm{loc}}^{1}(O;\mathbb{R}^{k})$ topology.

We briefly recall some facts about functions of bounded variation. For more
details we refer the reader to \cite{AFP, Ziemer}.

\begin{Definition}
Given $w\in BV\left( O;\mathbb{R}^{k}\right)  $ the \emph{approximate
upper}\textit{\ }\emph{limit }and the \emph{approximate lower limit} of each
component $w^{i}$, $i=1,\dots,k$, are defined by
\[
\left(  w^{i}\right)  ^{+}\left(  x\right)  :=\inf\left\{  t\in\mathbb{R}%
:\,\lim_{\varepsilon\rightarrow0^{+}}\frac{\mathcal{L}^{h}\left(  \left\{
y\in O\cap Q\left(  x,\varepsilon\right)  :\,w^{i}\left(  y\right)
>t\right\}  \right)  }{\varepsilon^{h}}=0\right\}
\]
and
\[
\left(  w^{i}\right)  ^{-}\left(  x\right)  :=\sup\left\{  t\in\mathbb{R}%
:\,\lim_{\varepsilon\rightarrow0^{+}}\frac{\mathcal{L}^{h}\left(  \left\{
y\in O\cap Q\left(  x,\varepsilon\right)  :\,w^{i}\left(  y\right)
<t\right\}  \right)  }{\varepsilon^{h}}=0\right\}  ,
\]
respectively. The \emph{jump set}\textit{\ }of $w$ is given by
\[
J_{w}:=\bigcup_{i=1}^{k}\left\{  x\in O:\,\left(  w^{i}\right)
^{-}\left(  x\right)  <\left(  w^{i}\right)  ^{+}\left(  x\right)  \right\}
.
\]

\end{Definition}

It can be shown that $J_{w}$ and the complement of the set of Lebesgue points
of $w$ differ, at most, by a set of $\mathcal{H}^{h-1}$ measure zero.
Moreover, $J_{w}$ is $\left(  h-1\right)  $-rectifiable, i.e., there are
$C^{1} $ hypersurfaces $\Gamma_{i}$ such that 
$\mathcal{H}^{h-1}\left(  J_{w}\setminus\cup_{i=1}^{\infty}\Gamma_{i}\right)=0.$

\begin{Proposition}\label{thm2.3BBBF}
If $w\in BV\left( O;\mathbb{R}^{k}\right)  $ then

\begin{enumerate}
\item[i)] for $\mathcal{L}^{h}-$a.e. $x\in O$%
\begin{equation*}
\lim_{\varepsilon\rightarrow0^{+}}\frac{1}{\varepsilon}\left\{  \frac
{1}{\mathcal{\varepsilon}^{h}}\int_{Q\left(  x,\varepsilon\right)
}\left\vert w(y)  -w(x)  -\nabla w\left(
x\right)  \cdot( y-x)  \right\vert ^{\frac{h}{h-1}%
}dy\right\}  ^{\frac{h-1}{h}}=0; 
\end{equation*}

\item[ii)] for $\mathcal{H}^{h-1}$-a.e. $x\in J_{w}$ there exist $w^{+}\left(
x\right)  ,$ $w^{-}\left(  x\right)  \in\mathbb{R}^{k}$ and $\nu\left(
x\right)  \in S^{h-1}$ normal to $J_{w}$ at $x,$ such that
\[
\lim_{\varepsilon\rightarrow0^{+}}\frac{1}{\varepsilon^{h}}\int_{Q_{\nu}%
^{+}\left(  x,\varepsilon\right)  }\left\vert w\left(  y\right)  -w^{+}\left(
x\right)  \right\vert dy=0,\qquad\lim_{\varepsilon\rightarrow0^{+}}\frac
{1}{\varepsilon^{h}}\int_{Q_{\nu}^{-}\left(  x,\varepsilon\right)  }\left\vert
w\left(  y\right)  -w^{-}\left(  x\right)  \right\vert dy=0,
\]

where $Q_{\nu}^{+}\left(  x,\varepsilon\right)  :=\left\{  y\in Q_{\nu}\left(
x,\varepsilon\right)  :\,\left\langle y-x,\nu\right\rangle >0\right\}  $ and
$Q_{\nu}^{-}\left(  x,\varepsilon\right)  :=\left\{  y\in Q_{\nu}\left(
x,\varepsilon\right)  :\,\left\langle y-x,\nu\right\rangle <0\right\}  $;

\item[iii)] for $\mathcal{H}^{h-1}$-a.e. $x\in O\backslash J_{w}$%
\[
\lim_{\varepsilon\rightarrow0^{+}}\frac{1}{\mathcal{\varepsilon}^{h}}%
\int_{Q\left(  x,\varepsilon\right)  }\left\vert w(y)
-w\left(  x\right)  \right\vert dy=0.
\]

\end{enumerate}
\end{Proposition}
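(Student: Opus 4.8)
All three assertions are the classical fine-structure properties of $BV$ maps, and I would establish them following the scheme of \cite{AFP} (see also \cite{Ziemer}); part (i) is treated independently of (ii)--(iii), and in every case the starting point is the Radon--Nikodym decomposition $Dw=\nabla w\,\mathcal L^h+D^sw$, where $\nabla w\in L^1(O;\mathbb R^{k\times h})$ is the density (the approximate gradient appearing in the statement) and $D^sw$ is singular with respect to $\mathcal L^h$. Two classical differentiation theorems then isolate the relevant exceptional sets: by the Besicovitch derivation theorem $|D^sw|(Q(x,\rho))/\rho^h\to 0$ for $\mathcal L^h$-a.e.\ $x$, while by the Lebesgue differentiation theorem $\mathcal L^h$-a.e.\ $x$ is a Lebesgue point of $\nabla w$ and a Lebesgue point of $w$ (with respect to the family of cubes $Q(x,\rho)$, which is regular).

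For (i), I would fix a point $x$ enjoying all three properties above and set $v(y):=w(y)-w(x)-\nabla w(x)\cdot(y-x)$, so that $Dv=(\nabla w-\nabla w(x))\mathcal L^h+D^sw$ and hence $|Dv|(Q(x,\rho))=o(\rho^h)$ as $\rho\to 0^+$. The Sobolev--Poincar\'e inequality for $BV$ functions on the cube $Q(x,\rho)$ then yields
\[
\left(\frac{1}{\rho^h}\int_{Q(x,\rho)}\bigl|v(y)-\langle v\rangle_{Q(x,\rho)}\bigr|^{\frac{h}{h-1}}\,dy\right)^{\frac{h-1}{h}}\le \frac{C}{\rho^{h-1}}\,|Dv|(Q(x,\rho))=o(\rho),
\]
where $\langle v\rangle_{Q(x,\rho)}$ is the mean. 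It remains to show $|\langle v\rangle_{Q(x,\rho)}|=o(\rho)$: this follows by chaining the previous estimate over the dyadic scales $\rho,\rho/2,\rho/4,\dots$, which bounds $|\langle v\rangle_{Q(x,\rho/2)}-\langle v\rangle_{Q(x,\rho)}|$ by $C\,|Dv|(Q(x,\rho))/\rho^{h-1}$, and using that $\langle v\rangle_{Q(x,\rho)}\to 0$ (because $x$ is a Lebesgue point of $w$ and the barycenter of $Q(x,\rho)$ approaches $x$), so that the telescoping sum of the $o(\rho)$-bounds gives $|\langle v\rangle_{Q(x,\rho)}|=o(\rho)$. Combining the two estimates is exactly the limit in (i).

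For (ii) and (iii) I would instead analyse the approximate-discontinuity set $S_w$, i.e.\ the set of points at which $w$ has no approximate limit. By the remark preceding the statement, $S_w$ and $J_w$ differ by an $\mathcal H^{h-1}$-negligible set, and $J_w$ is countably $(h-1)$-rectifiable. The substantive ingredient is the Federer--Vol'pert theorem: first one shows, by a covering argument, that $S_w$ is $\sigma$-finite with respect to $\mathcal H^{h-1}$; then a blow-up analysis — using the rectifiability of $J_w$, the locality of $|D^sw|$, and the relative isoperimetric inequality — shows that at $\mathcal H^{h-1}$-a.e.\ $x\in S_w$ the rescalings $w(x+\rho\,\cdot\,)$ converge in $L^1_{\mathrm{loc}}$ to a two-valued map $y\mapsto w^+(x)\chi_{\{y\cdot\nu(x)>0\}}+w^-(x)\chi_{\{y\cdot\nu(x)<0\}}$. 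Reading this convergence on the two half-cubes $Q_\nu^\pm(x,\rho)$ is precisely the one-sided trace statement (ii), with $\nu(x)$ coinciding $\mathcal H^{h-1}$-a.e.\ on $J_w$ with the measure-theoretic normal of the rectifiable set; and since $\mathcal H^{h-1}$-a.e.\ $x\notin J_w$ also lies outside $S_w$, at such $x$ the function $w$ has an approximate limit, which is the averaged convergence asserted in (iii).

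The hard part is the Federer--Vol'pert step underlying (ii)--(iii): showing that the approximate-discontinuity set is, up to $\mathcal H^{h-1}$-null sets, a rectifiable jump set on which the blow-ups take only two values requires the $\sigma$-finiteness argument together with delicate control of the perimeters of the superlevel sets $\{w^i>t\}$. By contrast, (i) is comparatively routine once the $BV$ Sobolev--Poincar\'e inequality and the two differentiation theorems are available; there the only mildly technical point is the dyadic chaining needed to dispose of the mean $\langle v\rangle_{Q(x,\rho)}$.
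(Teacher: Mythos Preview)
The paper does not prove this proposition: it is stated in the preliminaries section as a classical fact about $BV$ functions, with the sentence ``We briefly recall some facts about functions of bounded variation. For more details we refer the reader to \cite{AFP, Ziemer}'' immediately preceding it. There is therefore no proof in the paper to compare against.

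Your sketch is a faithful outline of the standard argument in \cite{AFP} (Theorems 3.83 and 3.78 and the surrounding material), which is precisely the reference the paper invokes. The approach for (i) via the $BV$ Sobolev--Poincar\'e inequality plus dyadic chaining of the means, and for (ii)--(iii) via the Federer--Vol'pert structure theorem and blow-up, is exactly how those references proceed, so in that sense you are aligned with what the paper implicitly relies on. The sketch is correct at the level of detail given; the one place I would tighten is the telescoping argument for $|\langle v\rangle_{Q(x,\rho)}|=o(\rho)$, where you need to be slightly careful that summing terms each of size $o(2^{-j}\rho)$ over $j\ge 0$ still gives $o(\rho)$---this is fine because the $o$-bound is uniform once $\rho$ is small (it comes from $|Dv|(Q(x,\rho))/\rho^h\to 0$ monotonically in the relevant sense), but it deserves a word.
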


We observe that in the vector-valued case in general $\left(  w^{i}\right)
^{\pm}\neq\left(  w^{\pm}\right)  ^{i}.$ In the sequel $w^{+}$ and $w^{-}$
denote the vectors introduced in $ii)$ above.

Choosing a normal $\nu_{w}\left(  x\right)  $ to $J_{w}$ at $x,$ we denote
the \emph{jump} of $w$ across $J_{w}$ by $\left[  w\right]  :=w^{+}-w^{-}.$
The distributional derivative of $w\in BV\left(  O;\mathbb{R}^{k}\right)
$ admits the decomposition
\[
Dw=\nabla w\mathcal{L}^{h}\lfloor O+\left(  \left[  w\right]  \otimes
\nu_{w}\right)  \mathcal{H}^{h-1}\lfloor J_{w}+D^{c} w,
\]
where $\nabla w$ represents the density of the absolutely continuous part of
the Radon measure $Dw$ with respect to the Lebesgue measure. The
\emph{Hausdorff}, or \emph{jump}, \emph{part} of $Dw$ is represented by
$\left(  \left[  w\right]  \otimes\nu_{w}\right)  \mathcal{H}^{h-1}\lfloor
J_{w}$ and $D^{c} w $ is the \emph{Cantor part} of $Dw$. The measure $D^{c} w
$ is singular with respect to the Lebesgue measure and it is diffuse, i.e.,
every Borel set $B\subset O$ with $\mathcal{H}^{h-1}\left(  B\right)
<\infty$ has Cantor measure zero.

The following result, that will be exploited in the sequel, can be found in \cite[Lemma 2.6]{FM2}.
\begin{Lemma}\label{lemma2.5BBBF}
Let $w \in BV( O;\mathbb R^k)$, for ${\cal H}^{h-1}$ a.e. $x$ in $J_w$,
$$
\displaystyle{\lim_{\e \to 0^+} \frac{1}{\e^{h-1}} \int_{J_w \cap  Q_{\nu(x)}(x, \e)} |w^+(y)- w^-(y)| d {\cal H}^{h-1} = |w^+(x)- w^-(x)|.}
$$
\end{Lemma}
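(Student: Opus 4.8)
\textbf{Proof plan for Lemma \ref{lemma2.5BBBF}.}

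The plan is to reduce the statement to the classical Besicovitch derivation theorem applied to a suitable measure on $J_w$. First I would fix a normal field $\nu=\nu_w$ on the rectifiable set $J_w$ and introduce the $\mathcal H^{h-1}$-measurable function $g\colon J_w\to[0,+\infty)$ given by $g(y):=|w^+(y)-w^-(y)|=|[w](y)|$. By the definition of the jump part of $Dw$ recalled above, $g\in L^1(J_w;\mathcal H^{h-1})$, since $\big(|[w]|\,\mathcal H^{h-1}\lfloor J_w\big)(O)\le |Dw|(O)<+\infty$. Hence the set function $\mu:=g\,\mathcal H^{h-1}\lfloor J_w$ is a finite nonnegative Radon measure on $O$, absolutely continuous with respect to the (locally finite, $\sigma$-finite on $J_w$) measure $\lambda:=\mathcal H^{h-1}\lfloor J_w$, with Radon--Nikod\'ym density exactly $g$.

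Next I would invoke the Besicovitch differentiation theorem for the pair $(\mu,\lambda)$ using the family of cubes $\{Q_{\nu(x)}(x,\e)\}_{\e>0}$ as the differentiation basis. Since $J_w$ is $(h-1)$-rectifiable, for $\mathcal H^{h-1}$-a.e.\ $x\in J_w$ the set $J_w$ has an approximate tangent plane orthogonal to $\nu(x)$ at $x$, so that $\lambda\big(Q_{\nu(x)}(x,\e)\big)=\mathcal H^{h-1}\big(J_w\cap Q_{\nu(x)}(x,\e)\big)=\e^{h-1}\big(1+o(1)\big)$ as $\e\to0^+$ (the cube $Q_{\nu(x)}$ being normalized to have unit $(h-1)$-dimensional cross-section); this is precisely the statement that $x$ is a point of density one of $J_w$ with respect to the tangent plane. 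The differentiation theorem then gives, for $\lambda$-a.e.\ $x$,
\[
\lim_{\e\to0^+}\frac{\mu\big(Q_{\nu(x)}(x,\e)\big)}{\lambda\big(Q_{\nu(x)}(x,\e)\big)}=g(x),
\]
and combining this with the asymptotics for $\lambda\big(Q_{\nu(x)}(x,\e)\big)$ yields
\[
\lim_{\e\to0^+}\frac{1}{\e^{h-1}}\int_{J_w\cap Q_{\nu(x)}(x,\e)}|w^+(y)-w^-(y)|\,d\mathcal H^{h-1}(y)
=\lim_{\e\to0^+}\frac{\mu\big(Q_{\nu(x)}(x,\e)\big)}{\e^{h-1}}=g(x)=|w^+(x)-w^-(x)|,
\]
which is the asserted identity. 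Since the result is quoted from \cite[Lemma 2.6]{FM2}, one may alternatively simply cite it; the argument above records why it holds.

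The main obstacle is the careful handling of the differentiation basis: the Besicovitch theorem in its standard form uses concentric balls, whereas here we differentiate along cubes $Q_{\nu(x)}(x,\e)$ whose orientation depends on $x$ through $\nu(x)$. One must check that these oriented cubes form a Vitali-type covering family for which the derivation theorem still applies (they do, being comparable to balls with uniformly bounded eccentricity), and — more delicately — that the measurable dependence $x\mapsto\nu(x)$ does not spoil the almost-everywhere convergence; this is handled by the rectifiability of $J_w$, which guarantees the approximate tangent plane exists $\mathcal H^{h-1}$-a.e.\ and makes the normalization $\lambda\big(Q_{\nu(x)}(x,\e)\big)\sim\e^{h-1}$ legitimate. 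Once these measure-theoretic technicalities are in place, the conclusion follows immediately.
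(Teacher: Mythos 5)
The paper does not prove this lemma; it is quoted verbatim from \cite[Lemma 2.6]{FM2}, so there is no internal argument to compare against. Your sketch is the standard proof behind that citation and is essentially correct: differentiate the finite Radon measure $\mu=|[w]|\,\mathcal H^{h-1}\lfloor J_w$ along the oriented cubes $Q_{\nu(x)}(x,\varepsilon)$, using the $(h-1)$-rectifiability of $J_w$ to get $\mathcal H^{h-1}(J_w\cap Q_{\nu(x)}(x,\varepsilon))=\varepsilon^{h-1}(1+o(1))$ at $\mathcal H^{h-1}$-a.e.\ point, and the comparability of these cubes with concentric balls to justify the differentiation basis. One small caveat: for a general $w\in BV(O;\mathbb R^k)$ the measure $\lambda=\mathcal H^{h-1}\lfloor J_w$ need not be locally finite (only $\sigma$-finite), so one should not literally invoke Besicovitch for the pair $(\mu,\lambda)$; instead either compute the limit of $\mu(Q_{\nu(x)}(x,\varepsilon))/\varepsilon^{h-1}$ directly, or first decompose $J_w$ into the sets $\{|[w]|>1/k\}$, each of finite $\mathcal H^{h-1}$-measure, and use that a.e.\ point of each piece has density one in it. With that adjustment the argument closes, and it is the same route taken in the cited reference.
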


We state here the compactness result in $BV$ (Proposition $3.23$ in \cite{AFP}).
\begin{Proposition}\label{compBV}
    Every sequence $(u_n)_n \subset BV_{\text{loc}}( O; \mathbb R^k)$ satisfying
    $$ \sup \left\{ \int_A |u_n| \; dx + |Du_n|(A),\; n \in \mathbb N \right\} < +\infty \qquad \forall A \in \mathcal A (O), \;A \subset \subset O $$
    admits a subsequence $(u_{n(k)})_k$ converging in $L^1_{\text{loc}}(O; \mathbb R ^k)$ to $u \in BV_{\text{loc}}(O; \mathbb R^k).$ If $O$ is a bounded extension domain and the sequence is bounded in $BV(O; \mathbb R^k)$ we can say that $u \in BV(O; \mathbb R^k)$ and the subsequence weakly* converges to $u$.
\end{Proposition}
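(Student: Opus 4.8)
\textit{Sketch of proof.} This is the classical $BV$ compactness theorem, and the plan is to reduce it to the compact embedding $BV(A;\mathbb R^k)\hookrightarrow L^1(A;\mathbb R^k)$ for bounded extension domains $A$ and then to localize by exhaustion. \textbf{Step 1 (compact embedding).} Let $(v_n)_n$ be bounded in $BV(A;\mathbb R^k)$ with $A$ a bounded extension domain. I would extend each $v_n$ to $\mathbb R^h$ so that $\sup_n\bigl(\|v_n\|_{L^1(\mathbb R^h)}+|Dv_n|(\mathbb R^h)\bigr)\le M$ for some $M<+\infty$, fix a mollifier $\rho_\varepsilon(x)=\varepsilon^{-h}\rho(x/\varepsilon)$ with $\rho\ge0$, $\int\rho=1$, $\mathrm{supp}\,\rho\subset\overline B(0,1)$, and use the uniform estimate $\|v_n*\rho_\varepsilon-v_n\|_{L^1(\mathbb R^h)}\le\varepsilon\,|Dv_n|(\mathbb R^h)\le\varepsilon M$, which follows from $\|v(\cdot-y)-v\|_{L^1}\le|y|\,|Dv|(\mathbb R^h)$ for $v\in BV(\mathbb R^h)$. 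For fixed $\varepsilon$ the family $(v_n*\rho_\varepsilon)_n$ is equibounded and equi-Lipschitz (with constants $M\|\rho_\varepsilon\|_\infty$ and $M\|\nabla\rho_\varepsilon\|_\infty$), hence precompact in $C^0(\overline A)$ by Arzel\`a--Ascoli, in particular in $L^1(A;\mathbb R^k)$. A diagonal extraction over $\varepsilon=1/m$, $m\in\mathbb N$, combined with the uniform estimate, then produces a subsequence of $(v_n)_n$ that is Cauchy, hence convergent, in $L^1(A;\mathbb R^k)$.

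\textbf{Step 2 (localization).} For the local statement I would pick bounded extension domains $A_1\subset\subset A_2\subset\subset\cdots\subset\subset O$ (e.g.\ finite unions of balls) with $\bigcup_jA_j=O$; by hypothesis $(u_n)_n$ is bounded in $BV(A_j;\mathbb R^k)$ for each $j$. Applying Step 1 successively on $A_1,A_2,\dots$ and passing to the diagonal subsequence $(u_{n(k)})_k$ yields a single subsequence converging in $L^1(A_j;\mathbb R^k)$ for every $j$, hence in $L^1_{\mathrm{loc}}(O;\mathbb R^k)$, to some $u$. For every $A\subset\subset O$ one then gets $\int_A|u|\,dx\le\liminf_k\int_A|u_{n(k)}|\,dx<+\infty$ and, by lower semicontinuity of the total variation along $L^1_{\mathrm{loc}}$ convergence (recalled after the definition of $BV$ above, applied on $A$), $|Du|(A)\le\liminf_k|Du_{n(k)}|(A)<+\infty$; since $A$ is arbitrary, $u\in BV_{\mathrm{loc}}(O;\mathbb R^k)$.

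\textbf{Step 3 (global statement).} If $O$ is itself a bounded extension domain and $M:=\sup_n\bigl(\|u_n\|_{L^1(O)}+|Du_n|(O)\bigr)<+\infty$, then Step 1 applies directly on $O$, yielding a subsequence $u_{n(k)}\to u$ in $L^1(O;\mathbb R^k)$ with, as above, $\|u\|_{L^1(O)}\le M$ and $|Du|(O)\le M$, so $u\in BV(O;\mathbb R^k)$. Since $(Du_{n(k)})_k$ is bounded in $\mathcal M(O;\mathbb R^{k\times h})$, it has a weak* cluster point, which is identified with $Du$ by testing against $\varphi\in C^1_c(O;\mathbb R^{k\times h})$ and using $u_{n(k)}\to u$ in $L^1$; hence $u_{n(k)}\weakstar u$ in $BV(O;\mathbb R^k)$.

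\textbf{Main obstacle.} The only genuine content lies in Step 1 — the uniform-in-$n$ mollification estimate and the correct bookkeeping of the double diagonal procedure (first over the exhausting sets $A_j$ in Step 2, then over the mollification scale $\varepsilon=1/m$ inside Step 1); everything else is routine measure theory.
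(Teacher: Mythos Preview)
Your sketch is correct and follows the standard mollification/Arzel\`a--Ascoli argument for $BV$ compactness. The paper itself does not prove this proposition at all: it is merely quoted as Proposition~3.23 from \cite{AFP}, so there is nothing to compare beyond noting that your outline is essentially the proof given in that reference.
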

Finally, we recall that any open set $O$ with compact Lipschitz boundary is an extension domain (Proposition $3.21$ in \cite{AFP}).

\subsection{Convexity notions}\label{convnot}

In order to obtain the representation result we need to recall some well known
notions of convex analysis, as well as some more general properties, essentially
concerning  convexity and quasiconvexity. Let 
$g : \mathbb R^h \to \mathbb R$ be a function, the convex envelope (or \textit{convexification})
of $g$ is the function
\begin{align*}
g^{\ast \ast}: = \sup\{h \leq g : h \hbox{ convex} \}. 
\end{align*}

\begin{Definition}
A Borel function $f : \mathbb R^{k\times h} \to \mathbb R$ is said to be quasiconvex at  $v$ if
$$f(v) \leq \int_Q f(v + \nabla w(x))dx$$ for every $w \in W^{1,\infty}_0(\mathbb R^h;\mathbb R^k).$
\end{Definition}

\begin{Definition}\label{quasiconvexificationdef} Given a Borel function $f : \mathbb R^{k\times h}\to \mathbb R$, the quasiconvexification
of $f$ at $v \in \mathbb R^{k\times h}$ is given by
$$
Qf(v) := \inf\left\{
\int_Q
f(v + \nabla w(x))dx : w \in C^\infty_0 (Q;\mathbb R^k)\right\}.$$
\end{Definition}
The above Definition  allows to recover the usual notion of quasiconvex envelope, i.e. the greatest quasiconvex function below $f$, see \cite{DA}. 

 The following result allows us to prove that there is no loss of generality in assuming $W$ quasiconvex, hence continuous in our main theorem.

\begin{Lemma}\label{lemma29}
    Let $W\;:\; \RI^{3\x3}\to \RI$ be a Borel function satisfying \eqref{coerci}, and $F_n^\ell$ be as in Definition \ref{def:energies} depending on the superscript $\ell$ defined in \eqref{ell}. Let also $\overline{F}_n^\ell$ be defined as $F_n^\ell$ but by replacing $K_n^{a,\ell}$ and $K_n^{b,\ell}$ respectively with:
    $$
     \overline{K_n^{a,\ell}}( u^a_n) = \int_{{\Omega}^a}
\left( QW\left(\displaystyle{\frac{1}{r_n}\nabla_{\alpha}u^{a}_n(x)} \;\; | \;\;
\displaystyle{\nabla_{3}u^{a}_n}(x)\right)+ H_n^{a,\ell}(x) \cdot u^a_n(x)\right)dx,
    $$
    $$
\overline{K_n^{b,\ell}}( u^b_n):= \int_{{\Omega}^b}
\left(QW\left(\displaystyle{\nabla_{\alpha}u^b_n(x)} \;\; | \; \;
\displaystyle{\frac{1}{h_n}\nabla_{3}u^b_n(x)}
\right)+ H_n^{b,\ell}(x)\cdot u_n^b(x)\right)dx.    
    $$
Here $QW$ stands for the quasiconvexification of the integrand $W$.

Let 
\begin{align}
\nonumber
\mathcal R^\ell:=\Big\{ &(u^a, u^b)\in BV(\Omega^a;\mathbb R^3)\times BV(\Omega^b;\mathbb R^3): \exists (u^a_n,u^b_n)\in \mathcal U_n^\ell;
\\
\label{def:Rell}
 &(u^a_n, u^b_n)\overset{\ast}{\rightharpoonup}(u^a,u^b)\in BV(\Omega^a;\mathbb R^3)\times BV(\Omega^b;\mathbb R^3)
 \\
 \nonumber
  &\hbox{ and } \limsup_n F_n^\ell(u^a_n, u^b_n) < +\infty\Big\}.
 \end{align}

Then, the following functionals $G^\ell$, $\overline {G^\ell}: \mathcal R^\ell \to \mathbb R$ are equal:

    \begin{align}
        \label{normal_inf}
        G^\ell(u^a,u^b)&:=\inf\left\{\liminf_{n\to \infty} F_n^\ell(u_n^a,u_n^b): (u_n^a, u_n^b)\in \mathcal U_n^\ell,  \right. \nonumber\\
&\left.
u_n^a \overset{\ast}{\rightharpoonup} u^a \hbox{ in }BV(\Omega^a;\mathbb R^3), 
u_n^b  \overset{\ast}{\rightharpoonup} u^b \hbox{ in }BV(\Omega^b;\mathbb R^3)\right\},
\\
\nonumber
\\
\label{quasiconv_inf}
\overline{G^\ell}(u^a,u^b)&:=\inf
\left
\{\liminf_{n\to \infty}\overline{F}_n^\ell(u_n^a,u_n^b): (u_n^a, u_n^b)\in \mathcal U_n^\ell,  \right. \nonumber\\
& \left.
u_n^a \overset{\ast}{\rightharpoonup} u^a \hbox{ in }BV(\Omega^a;\mathbb R^3), 
u_n^b  \weakstar u^b \hbox{ in }BV(\Omega^b;\mathbb R^3)
\right
\}.
 \end{align}
\end{Lemma}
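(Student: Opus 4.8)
\textbf{Proof strategy for Lemma \ref{lemma29}.}

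The plan is to show the two scaled functionals have the same $\Gamma$-$\liminf$ along $BV$-weak* converging sequences by establishing the two inequalities $G^\ell \geq \overline{G^\ell}$ and $G^\ell \leq \overline{G^\ell}$ separately. The inequality $G^\ell \geq \overline{G^\ell}$ is immediate: since $QW \leq W$ pointwise, one has $\overline{F}_n^\ell \leq F_n^\ell$ for every $n$ (the load terms are identical), hence $\liminf_n \overline{F}_n^\ell(u_n^a,u_n^b) \leq \liminf_n F_n^\ell(u_n^a,u_n^b)$ along any admissible sequence, and passing to the infimum over such sequences gives the claim. Note both infima are taken over the same class of sequences $(u_n^a,u_n^b) \in \mathcal U_n^\ell$ with the prescribed weak* limits, so no modification of competitors is needed here.

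For the reverse inequality $G^\ell \leq \overline{G^\ell}$, the idea is the standard relaxation/approximation argument: given an admissible sequence $(u_n^a,u_n^b) \in \mathcal U_n^\ell$ realizing (up to $\varepsilon$) the infimum defining $\overline{G^\ell}(u^a,u^b)$, one must, for each fixed $n$, replace $u_n^a$ and $u_n^b$ by new functions $v_n^a, v_n^b$ — still in $\mathcal U_n^\ell$ and still close to $(u^a,u^b)$ — for which the \emph{non-quasiconvexified} energies $K_n^{a,\ell}(v_n^a)$ and $K_n^{b,\ell}(v_n^b)$ are at most $\overline{K_n^{a,\ell}}(u_n^a) + 1/n$ and $\overline{K_n^{b,\ell}}(u_n^b)+1/n$, respectively. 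This is done via the characterization of the quasiconvex envelope as an infimum over $W^{1,\infty}_0$-perturbations: on the rescaled domains $\Omega^a, \Omega^b$ the integrands $(\xi_\alpha|\xi_3) \mapsto W(\tfrac1{r_n}\xi_\alpha|\xi_3)$ and $(\xi_\alpha|\xi_3)\mapsto W(\xi_\alpha|\tfrac1{h_n}\xi_3)$ are obtained from $W$ by a fixed linear change of variables in the gradient, so their quasiconvexifications are obtained by the \emph{same} change of variables applied to $QW$; thus the integral relaxation theorem for quasiconvex envelopes (e.g. the Acerbi--Fusco / Dacorogna result, applied locally on cubes via a Vitali covering) produces, for fixed $n$, oscillating perturbations supported in the interior that lower the $W$-energy to the $QW$-energy up to an arbitrarily small error, while converging strongly to zero in $L^1$ (indeed in $W^{1,\infty}$-weak*, so the $BV$-weak* limit and the boundary/junction conditions in $\mathcal U_n^\ell$ are preserved) and leaving the continuous load term essentially unchanged by Remark \ref{remloads}. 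A diagonal argument in $n$ then yields an admissible sequence $(v_n^a,v_n^b)$ with $\liminf_n F_n^\ell(v_n^a,v_n^b) \leq \liminf_n \overline{F}_n^\ell(u_n^a,u_n^b) + \varepsilon$, and letting $\varepsilon \to 0$ finishes the proof. Finiteness of $\liminf_n \overline{F}_n^\ell$ is guaranteed by the definition of $\mathcal R^\ell$ together with the coercivity \eqref{coerci}, which bounds the perturbation errors uniformly.

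The main obstacle is the bookkeeping required to make the interior perturbation compatible with \emph{both} the Dirichlet conditions on the far boundaries (top of $\Omega^a$, lateral boundary of $\Omega^b$) and the junction condition \eqref{eq:bc}: the perturbation must be localized away from $\partial\omega\times]-1,0[$, $\omega\times\{1\}$, and the interface $\omega\times\{0\}$. This is handled by running the relaxation only on cubes compactly contained in the respective open domains (so the perturbations vanish near all constrained boundaries, preserving membership in $\mathcal U_n^\ell$), and absorbing the small uncovered boundary layer into the error using the linear growth bound \eqref{coerci} — the layer can be taken of arbitrarily small measure, uniformly in $n$, since it only needs to shrink as the relaxation-error tolerance $1/n$ is refined. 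One should also record that because each perturbation is made at fixed $n$ (not in the limit), the rescaling factors $1/r_n$, $1/h_n$ are mere constants during that step, so no interplay between the dimension-reduction scaling and the relaxation is actually needed — this is what makes the argument go through cleanly and is worth stating explicitly.
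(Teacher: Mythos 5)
Your proposal is correct and follows essentially the same route as the paper: the trivial inequality from $QW\le W$, and for the converse a fixed-$n$ relaxation via the Dacorogna representation of the quasiconvex envelope (with the observation that quasiconvexification commutes with the constant rescaling of the gradient), preservation of traces and hence of the junction condition because the competitors lie in $u_n^{a}+W^{1,1}_0$ and $u_n^{b}+W^{1,1}_0$, followed by a diagonal argument and the coercivity \eqref{coerci} to upgrade $L^1$ convergence to $BV$ weak* convergence. The only cosmetic difference is that you unfold the covering/localization argument that the paper absorbs into the citation of \cite[Theorem 9.1]{DA}.
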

\begin{proof}

    From the inequality $QW\leq W$ it easily follows that $\overline{G}^\ell\leq G^\ell$.
    Now we prove the other inequality.  Let $(u^a,u^b)\in\R^\ell$.  There is no loss of generality in assuming $\overline{G}^\ell(u^a, u^b)<+\infty$.

    As already observed above, the contribution of the loads can be neglected since it is continuous with respect to $\Gamma$-convergence. 

Let $(u_n^a,u_n^b) \subset \mathcal{U}^\ell_n$ be an infimizing sequence of \eqref{quasiconv_inf}, namely
    $$
    \lim_n\overline{F}^\ell_n(u_n^a,u_n^b) = \overline{G}^\ell(u^a,u^b)<+\infty.
    $$
    
    For any $n\in \N$, from Theorem $9.1$ in \cite{DA},  we have that
    
    \begin{align}
        \int_{\Omega^a} QW_n^a(\nabla u_n^a(x)) dx = \inf \left\{\liminf_{k\to\infty} \int_{\Omega^a} W_n^a(\nabla u_k(x))dx: \; u_k \in u_n^a + W_0^{1,1}(\Omega^a;\RI^3)
        \right.
        \nonumber
        \\
        \left.
        u_k \to u_n^a \hbox{ in } L^1(\Omega^a;\RI^3)
        \right\},
        \label{relax_daco_a}
        \\
        \nonumber
        \\
        \int_{\Omega^b} QW_n^b(\nabla u_n^b(x)) dx = \inf \left\{\liminf_{k\to\infty} \int_{\Omega^b} W_n^b(\nabla u_k(x))dx: \; u_k \in u_n^b + W_0^{1,1}(\Omega^b;\RI^3)
        \right.
        \nonumber
        \\
        \left.
        u_k \to u_n^b \hbox{ in } L^1(\Omega^b;\RI^3)
        \right\},
        \label{relax_daco_b}
    \end{align}
where we define,  for every $\xi \in \mathbb R^{3 \times 3}$,

$$
W_n^a(\xi) := W\left(  \frac{1}{r_n}\xi_\alpha  \;|\;  \xi_3\right), \quad  W_n^b(\xi) := W\left(  \xi_\alpha  \;|\;  \frac{1}{h_n}\xi_3\right).
$$
Notice that
$$
QW^a_n(\xi) = QW\left(  \frac{1}{r_n}\xi_\alpha  \;|\;  \xi_3\right), \quad QW_n^b(\xi) = QW\left(  \xi_\alpha  \;|\;  \frac{1}{h_n}\xi_3\right),
$$
for every $\xi \in \mathbb R^{3 \times 3}$,
(see \cite{BFMbend} for the details of similar equalities). 
Based on \eqref{relax_daco_a} and \eqref{relax_daco_b} we can therefore choose for fixed $n\in\N$, $u^a_{n,k} \in u_n^a + W_0^{1,1}(\Omega^a;\RI^3)$ and $u^b_{n,k} \in u_n^b + W_0^{1,1}(\Omega^b;\RI^3)$ such that 
\begin{align*}
    u^a_{n,k} \to u_n^a \hbox{ in } L^{1}(\Omega^a;\RI^3), &\quad u^b_{n,k} \to u_n^b \hbox{ in } L^{1}(\Omega^b;\RI^3) \hbox{ as }k \to \infty, 
    \\
    \lim_k \int_{\Omega^a} W\left(  \tfrac{1}{r_n}\nabla_\alpha u_{n,k}^a(x) \;|\;  \nabla_3u_{n,k}^a(x)\right)dx &= \int_{\Omega^a} QW\left(  \tfrac{1}{r_n}\nabla_\alpha u_{n}^a(x) \;|\;  \nabla_3u_{n}^a(x)\right) dx,
    \\
    \lim_k \int_{\Omega^a} W\left(  \nabla_\alpha u_{n,k}^b(x) \;|\;  \tfrac{1}{h_n}\nabla_3u_{n,k}^b(x)\right)dx &= \int_{\Omega^a} QW\left(  \nabla_\alpha u_{n}^b(x) \;|\;  \tfrac{1}{h_n}\nabla_3u_{n}^b(x)\right) dx.
\end{align*}

Since the traces of $u_{n,k}^a$ and $u_{n,k}^b$ are respectively the traces of $u_n^a$ and $u_n^b$ it results that $(u_{n,k}^a$,$u_{n,k}^b) \in \mathcal{U}^\ell_n$. By a diagonal argument we can construct sequences $(u_{n,k(n)}^a,u_{n,k(n)}^b)\in \mathcal{U}^\ell_n$ such that:
\begin{align}
\label{conv_diag_lemma}
 u^a_{n,k(n)} \to u^a \hbox{ in } L^{1}(\Omega^a;\RI^3) &\quad u^b_{n,k(n)} \to u^b \hbox{ in } L^{1}(\Omega^b;\RI^3) \hbox{ as }n \to \infty,
 \\
 \left| F_n^\ell(u_{n,k(n)}^a,u_{n,k(n)}^b) \;\right.&-\left.\; \overline{F_n^\ell}(u_{n}^a,u_{n}^b) \right| \leq 1/n. \nonumber
\end{align}

Therefore
\begin{equation}
\label{recoveryquasiconvex}
\lim_{n\to \infty} F_n^\ell(u_{n,k(n)}^a,u_{n,k(n)}^b) = \lim_{n \to \infty}\overline{F^\ell_n}(u_n^a,u_n^b) = \overline{G^\ell}(u^a,u^b) < +\infty.
\end{equation}

Recall that $\ell$ is defined as in \eqref{ell}. From the above inequality and \eqref{coerci} we can easily see that for any case $\ell\in [0,+\infty]$, 
we have that 
$$
\sup_{n\in \mathbb N}\|\nabla u_{n,k(n)}^a \|_{L^1(\Omega^a;\RI^3)} < +\infty, \quad \sup_{n\in \mathbb N}\|\nabla u_{n,k(n)}^b \|_{L^1(\Omega^b;\RI^3)} < +\infty.
$$
From \eqref{conv_diag_lemma} it follows
$$
u_{n,k(n)}^a  \overset{\ast}{\rightharpoonup} u^a \hbox{ in }BV(\Omega^a;\mathbb R^3) \quad u_{n,k(n)}^b  \overset{\ast}{\rightharpoonup} u^b \hbox{ in }BV(\Omega^b;\mathbb R^3), \hbox{ as }n \to \infty.
$$
We stress that our arguments cover all the cases for $\ell$. Indeed, as already done in Remark \ref{refremweak}, the strong convergences appearing for the scaled gradients in the definitions of the $\Gamma$-limits in \eqref{probl_infty,0}, \eqref{probl_infty,infty},  and \eqref{prob_0} are redundant (see Proposition \ref{Compactnessres1} and \ref{Compactness_infty})  in the sense that it will be enough to assume that there is weak* convergence in $BV$, then the strong will follow by \eqref{coerci} and the imposed boundary conditions. 
Since the sequences $(u_{n,k(n)}^a,u_{n,k(n)}^b)$ are admissible to problem \eqref{normal_inf}, from \eqref{recoveryquasiconvex} we conclude $G^\ell(u^a,u^b) \leq \overline{G}^\ell(u^a,u^b)$.
\end{proof}

The following result is a consequence of standard diagonalization arguments, the coercivity assumption in \eqref{coerci}, the metrizability of weak* topology in $BV$ on compact sets,  and the attainment of the functional $G^\ell$ in \eqref{normal_inf}, by a subsequence of $(F_n^\ell(u^a_n, u^b_n))$ with $(u^a_n, u^b_n) \in \mathcal U_n^\ell$. 
Once again, we emphasize that the arguments below cover all the energies appearing in Theorem \ref{generalrepresentation}.

\begin{Lemma}
\label{preFlsc} 
Let $W\;:\; \RI^{3\x3}\to \RI$  be a Borel  function satisfying \eqref{coerci}, and $F_n^\ell$ be as in Definition \ref{def:energies} depending on the superscript $\ell$ defined in \eqref{ell}. Let $G^\ell$ be the functional defined in \eqref{normal_inf}. Then it is lower semicontinuous on the space $\mathcal R^\ell$, in \eqref{def:Rell}, with respect to the $BV(\Omega^a;\mathbb R^3)\times BV(\Omega^b;\mathbb R^3)$ weak * convergence.
\end{Lemma}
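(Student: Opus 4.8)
The plan is to show that $G^\ell$ coincides with a suitable $\Gamma$-$\liminf$ of the functionals $F_n^\ell$ (extended by $+\infty$ off $\mathcal U_n^\ell$), and then invoke the general fact that $\Gamma$-lower limits are always lower semicontinuous. The point is purely ``soft'': no structure of $W$ beyond \eqref{coerci} is needed. First I would fix a sequence $(v_j^a,v_j^b)_j\subset\mathcal R^\ell$ converging weakly$^\ast$ in $BV(\Omega^a;\RI^3)\times BV(\Omega^b;\RI^3)$ to some $(u^a,u^b)$, and assume without loss of generality that $\liminf_j G^\ell(v_j^a,v_j^b)=\lim_j G^\ell(v_j^a,v_j^b)=:m<+\infty$ (otherwise there is nothing to prove); in particular $(u^a,u^b)\in\mathcal R^\ell$ because each $(v_j^a,v_j^b)$ is attained by an admissible sequence with finite energy and the weak$^\ast$ limit of weak$^\ast$-convergent sequences in $BV$ is controlled.

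The core step is a diagonalization. For each $j$, by definition of $G^\ell$ in \eqref{normal_inf} and by the attainment remark preceding the statement, there is an admissible sequence $(u^a_{j,n},u^b_{j,n})_n\in\mathcal U_n^\ell$ with $u^a_{j,n}\weakstar v_j^a$, $u^b_{j,n}\weakstar v_j^b$ as $n\to\infty$ and $\liminf_n F_n^\ell(u^a_{j,n},u^b_{j,n})\le G^\ell(v_j^a,v_j^b)+\tfrac1j$; passing to a subsequence in $n$ (depending on $j$) we may replace $\liminf_n$ by $\lim_n$. Now I would use the metrizability of the weak$^\ast$ topology of $BV$ on norm-bounded sets: since the energy bound together with \eqref{coerci} and the prescribed Dirichlet data force a uniform $BV$ bound on all the relevant sequences (this is exactly the kind of estimate already used in the proof of Lemma \ref{lemma29}), there is a metric $d$ inducing weak$^\ast$ convergence on the ball containing all $u^a_{j,n}$, $v_j^a$, $u^a$, and similarly on the $\Omega^b$ side. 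Choose $n(j)\nearrow\infty$ so that $d(u^a_{j,n(j)},v_j^a)<\tfrac1j$, $d(u^b_{j,n(j)},v_j^b)<\tfrac1j$, and $|F_{n(j)}^\ell(u^a_{j,n(j)},u^b_{j,n(j)})-\lim_n F_{n(j)}^\ell(u^a_{j,n},u^b_{j,n})|<\tfrac1j$ — the last being possible because for fixed $j$ the $n$-limit exists. Relabel $w^a_j:=u^a_{j,n(j)}$, $w^b_j:=u^b_{j,n(j)}$.

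Then $(w^a_j,w^b_j)\in\mathcal U_{n(j)}^\ell$ with $n(j)\to\infty$, and by the triangle inequality for $d$ together with $v_j^a\weakstar u^a$, $v_j^b\weakstar u^b$, we get $w^a_j\weakstar u^a$ and $w^b_j\weakstar u^b$; moreover $\limsup_j F_{n(j)}^\ell(w^a_j,w^b_j)\le\limsup_j\big(G^\ell(v_j^a,v_j^b)+\tfrac2j\big)=m<+\infty$, so $(u^a,u^b)\in\mathcal R^\ell$ and $(w^a_j,w^b_j)$ is an admissible competitor in the infimum defining $G^\ell(u^a,u^b)$ — strictly speaking one should first pass to a subsequence realizing the $\liminf$ over the index $j$ and, if necessary, interpolate the indices $n(j)$ to a sequence defined for all $n$ (setting the competitor arbitrarily, e.g. equal to the prescribed boundary datum, for the missing $n$, which does not affect the $\liminf_n$). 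Hence
\begin{equation*}
G^\ell(u^a,u^b)\le\liminf_{j}F_{n(j)}^\ell(w^a_j,w^b_j)\le\liminf_j G^\ell(v_j^a,v_j^b),
\end{equation*}
which is exactly the claimed lower semicontinuity. I expect the only delicate point to be the bookkeeping in the diagonal extraction: one must be careful that the diagonal sequence is genuinely admissible for the definition of $G^\ell$ (i.e. indexed by $n\to\infty$ with one competitor per $n$ in $\mathcal U_n^\ell$), which is handled by the standard Attouch-type diagonalization lemma, and that the uniform $BV$ bound needed to metrize weak$^\ast$ convergence is in force, which follows from \eqref{coerci}, the Poincaré inequality adapted to the two boundary conditions, and the finiteness of $m$ — exactly as in the last paragraphs of the proof of Lemma \ref{lemma29}.
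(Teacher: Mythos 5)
Your proposal is correct and follows essentially the same route as the paper: a diagonal extraction over the approximating sequences realizing $G^\ell(v_j^a,v_j^b)$, made legitimate by the metrizability of the weak$^\ast$ topology of $BV$ on the norm-bounded sets provided by \eqref{coerci} and the boundary data, exactly as the paper indicates in the remark preceding the lemma. The extra bookkeeping you flag (passing to a subsequence in $n$, interpolating the indices $n(j)$) is sound and only makes explicit what the paper's terser diagonalization leaves implicit.
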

\begin{proof}
 As observed above, the lower semicontinuity result can be proven with respect to weak* convergence in $BV$ in all the cases $\ell \in [0,+\infty]$, since the compactness results will provide autonomously stronger convergences in the cases $\ell\in\{0,+\infty\}$.   Let $(u^a,u^b)\in\R^\ell$ and $(u^a_n,u^b_n)\in\R^\ell$ such that $u^a_n\weakstar u^a$ in $BV(\Omega^a;\RI^3)$, $u^b_n\weakstar u^b$ in $BV(\Omega^b;\RI^3)$.
    Let $(u_{n,k}^a,u_{n,k}^b)\in\U_k^\ell$ be sequences such that 
    \begin{align*}
        u^a_{n,k} \weakstar u_n^a \hbox{ in } BV(\Omega^a;\RI^3), &\quad u^b_{n,k} \weakstar u_n^b \hbox{ in } BV(\Omega^b;\RI^3) \quad \hbox{as }k\to\infty,
        \\
        \lim_k F_k^\ell(u_{n,k}^a,u_{n,k}^b) &= G^\ell(u_n^a,u_n^b).
    \end{align*}

    By a diagonal argument we can construct a sequence $(u_{n,k(n)}^a,u_{n,k(n)}^b) \in \U_{k(n)}^\ell$ for $n\in\N$ such that

    \begin{align*}
        u^a_{n,k(n)} \weakstar u^a \hbox{ in } BV(\Omega^a;\RI^3), &\quad u^b_{n,k(n)} \weakstar u^b \hbox{ in } BV(\Omega^b;\RI^3),
        \\
        \left|F_{k(n)}^\ell(u_{n,k(n)}^a,u_{n,k(n)}^b) \; \right.
        &-
        \left.
        \; G^\ell(u_n^a,u_n^b) \right| \leq \frac{1}{n}.
    \end{align*}

    The sequences $(u_{n,k(n)}^a,u_{n,k(n)}^b)$ are admissible for the infimizing problem $G^\ell(u^a,u^b)$ in \eqref{normal_inf} and from the above inequality
    $$
    \liminf_{n\to\infty} G^\ell(u_{n}^a,u_{n}^b)=\liminf_{n\to \infty} F_{k(n)}^\ell(u_{n,k(n)}^a,u_{n,k(n)}^b) \geq G^\ell(u^a,u^b).
    $$
 \end{proof}

\section{Main results}\label{MR}

We are dividing the section in three subsections according to the cases \eqref{eq:Fnq}, \eqref{eq:Fninfty} and \eqref{eq:Fn0}. 

In each subsection we detail the compactness result, and prove the lower bound and the upper bound inequalities that lead to the statement in Theorem \ref{generalrepresentation}.

\subsection{Finite rate}
We start with the case \eqref{eq:Fnq}. We emphasize that our analysis is performed in the realm of $\Gamma$-convergence, hence the term $\int_{\Omega^a} H^{a,q}_n(x) \cdot u^a_n(x)dx + \frac{h_n}{r_n^2}\int_{\Omega^b} H^{b,q}_n(x)\cdot u^b_n(x)dx$ with $H^{a,q}_n$ and $H^{b,q}_n$ satisfying \eqref{loadq} 
 can be neglected, i.e. $H$ can be assumed to be $0$ since it is a continuous perturbation, which won't affect our conclusion (see Remark \ref{remloads}). 
In order to provide our asymptotic analysis, we start by providing a compactness result. To this end, we begin with the introduction of the following limiting functions space.

\begin{Definition}[The limit space \(V\)]\label{def:spaceV}
Let $V$ be the space defined as 
      
\begin{equation}
\label{eq:spazioZ}
 V=BV(]0,1[;\RR^3)\x BV(\omega;\RR^3),
\end{equation}
Indeed, with an abuse of notation, we will identify in the sequel fields $u^a \in BV(\Omega^a;\mathbb R^3)$ such that $D_\alpha u^a=\bf 0_\alpha$ with fields, still denoted with the same symbols, in $BV(]0,1[;\mathbb R^3) $ and fields $u^b \in BV(\Omega^b;\mathbb R^3)$ with $D_3 u^b=0$
 with fields $u^b \in BV(\omega;\mathbb R^3)$. \end{Definition}

\begin{Proposition}\label{Compactnessres1}
    Consider the Borel function  \(W : \mathbb{R}^{3\x 3}\to 
    \mathbb{R}\) satisfying
    \eqref{coerci}.  
    For  $n \in \mathbb N$ let \(r_n\), \(h_n\) be such that \eqref{hrzero}, \eqref{ell} and \eqref{loadq} hold. Let the spaces
    \(\mathcal{U}_n^q\) and \(V\) be introduced in \eqref{Vn} and \eqref{eq:spazioZ},
    respectively where the Dirichlet conditions in $\U_n^q$ are as in \eqref{dirichletq}.
    Let $F_n^q$  be the energy functional defined in \eqref{eq:Fnq},
    where the loads $H_n^{a,q}, H_n^{b,q}$ satisfy condition \eqref{loadq}.
    
    Then, for every  
        $(u_n^a, u_n^b) \in \mathcal{U}_n^q$  such that \(\sup_{n\in\N} |F_n^q(  u_n^a,  u_n^b)|
<+\infty\), there exist an increasing sequence of positive integer
        numbers $(n_i)_{i \in \mathbb N}$,  and $({u}^a, {u}^b)
        \in V$, depending possibly on
        the selected subsequence $(n_i)_{i \in \mathbb N}$, such that
       
    \begin{equation}\label{weakdefinitive}\left\{
                \begin{array}{l}
                          u^a_{n_i}\overset{\ast}{\rightharpoonup}  {u}^a  \hbox{ in
                        }BV(\Omega^a;\RR^3),\\\\  u^b_{n_i}\overset{\ast}{\rightharpoonup
                        } {u}^b \hbox{ in
                        }BV(\Omega^b;\RR^3).\end{array}\right.\end{equation}\medskip

%

\end{Proposition}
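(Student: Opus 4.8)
The plan is to derive, from the energy bound $\sup_n |F_n^q(u_n^a,u_n^b)| < +\infty$, uniform $BV$-type bounds for the two components separately and then invoke the compactness result in $BV$ (Proposition \ref{compBV}). First I would use the coercivity in \eqref{coerci} to discard the load terms: since \eqref{loadq} gives uniform $L^\infty$ bounds on $H_n^{a,q},H_n^{b,q}$, and since the admissible functions satisfy Dirichlet conditions on $\omega\times\{1\}$ (resp.\ on $\partial\omega\times]-1,0[$), a Poincar\'e inequality lets me absorb $\int H_n^{a,q}\cdot u_n^a$ and $\frac{h_n}{r_n^2}\int H_n^{b,q}\cdot u_n^b$ into a constant plus a small multiple of the gradient terms, so that $\sup_n |F_n^q|<+\infty$ forces
\begin{equation*}
\sup_n \int_{\Omega^a}\Bigl|\Bigl(\tfrac1{r_n}\nabla_\alpha u_n^a \,\big|\, \nabla_3 u_n^a\Bigr)\Bigr|\,dx <+\infty,\qquad
\sup_n \tfrac{h_n}{r_n^2}\int_{\Omega^b}\Bigl|\Bigl(\nabla_\alpha u_n^b \,\big|\, \tfrac1{h_n}\nabla_3 u_n^b\Bigr)\Bigr|\,dx <+\infty.
\end{equation*}

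Next I would extract the component estimates. For $u_n^a$: the above bound gives $\sup_n \|\nabla_\alpha u_n^a\|_{L^1} \le r_n\,C\to 0$ and $\sup_n \|\nabla_3 u_n^a\|_{L^1}<+\infty$; combined with the Dirichlet condition at $\omega\times\{1\}$ and Poincar\'e, this bounds $\|u_n^a\|_{L^1(\Omega^a;\RI^3)}$, hence $(u_n^a)$ is bounded in $W^{1,1}(\Omega^a;\RI^3)\subset BV(\Omega^a;\RI^3)$; since $\Omega^a$ is an extension domain, Proposition \ref{compBV} yields a subsequence $u^a_{n_i}\weakstar u^a$ in $BV(\Omega^a;\RI^3)$. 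For $u_n^b$: since $\ell=\lim h_n/r_n^2=q\in\,]0,+\infty[$, the factor $\frac{h_n}{r_n^2}$ is bounded away from $0$ and $\infty$, so $\sup_n \|\nabla_\alpha u_n^b\|_{L^1}<+\infty$ and $\sup_n \tfrac1{h_n}\|\nabla_3 u_n^b\|_{L^1}<+\infty$, whence also $\sup_n\|\nabla_3 u_n^b\|_{L^1}\le h_n C\to 0$; together with the Dirichlet condition on $\partial\omega\times]-1,0[$ this bounds $(u_n^b)$ in $W^{1,1}(\Omega^b;\RI^3)$, and another application of Proposition \ref{compBV} (passing to a further subsequence, relabelled) gives $u^b_{n_i}\weakstar u^b$ in $BV(\Omega^b;\RI^3)$.

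Finally I would identify the limits as elements of $V$. Because $\|\nabla_\alpha u_n^a\|_{L^1}\to 0$, lower semicontinuity of the total variation with respect to $L^1$-convergence forces $D_\alpha u^a = ({\bf 0},{\bf 0})\in \RI^{3\times2}$, so $u^a$ depends only on $x_3$ and is identified with an element of $BV(]0,1[;\RI^3)$; symmetrically $\|\nabla_3 u_n^b\|_{L^1}\to 0$ forces $D_3 u^b = {\bf 0}$, so $u^b$ is identified with an element of $BV(\omega;\RI^3)$. Hence $(u^a,u^b)\in V$ and \eqref{weakdefinitive} holds. The main obstacle, and the only place requiring care, is the handling of the load terms and the Dirichlet data to obtain the $L^1$ bound on the functions themselves (not just their gradients): one must check that the Poincar\'e-type inequality is applicable on $\Omega^a$ and $\Omega^b$ with the prescribed (possibly $n$-dependent, through $r_n d^{a,q}x_\alpha$ and $h_n g^{b,q}x_3$) boundary values, which here is immediate since $d^{a,q}=({\bf 0},{\bf 0})$ and $g^{b,q}={\bf 0}$ by \eqref{dirichletq}, so the boundary data are the fixed $c^{a,q}$ and $f^{b,q}\in W^{1,1}(\omega;\RI^3)$; for non-null loads one argues as in \cite[Lemma 2.2]{GGLM1} and \cite[Proposition 5.1]{GauZa}.
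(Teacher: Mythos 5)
Your proposal is correct and follows essentially the same route as the paper: coercivity plus the Dirichlet data and Poincar\'e give uniform $W^{1,1}$ bounds with $\|\nabla_\alpha u_n^a\|_{L^1}=O(r_n)$ and $\|\nabla_3 u_n^b\|_{L^1}=O(h_n)$, then Proposition \ref{compBV} yields the weak* limits, whose vanishing derivatives $D_\alpha u^a$ and $D_3 u^b$ identify them with elements of $V$. The only difference is cosmetic: the paper simply sets the loads to zero and defers the general case to Remark \ref{remloads}, whereas you carry out the absorption argument explicitly.
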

\begin{proof}[Proof]
 We prove  \eqref{weakdefinitive} in the simplified case in which the loads $H^{a,q}_n$ and $H^{b,q}_n$ are 0, referring to Remark \ref{remloads} for more precise setting in the general case.
        Furthermore we prove only  the first
        convergence, the proof of the second one being similar, and relying on \eqref{eq:Fnq}.

Assumption \eqref{coerci}, 
and the fact that the subset of $\mathcal U^q_n$ of fields with finite energy is not empty, ensures that
        \begin{equation}\label{boundedenergy}
                        \displaystyle{\sup_{n\in \mathbb N}}\Bigg\{\displaystyle{\int_{\Omega^a}\left(\frac{1}{{r_n}}|\nabla_{\alpha}  u^a_n(x)|+
                                |\nabla_3  u^a_n(x)|
                                \right)dx  }
                        + \displaystyle{\frac{h_n}{r^{2}_n}\int_{\Omega^b}\left(|\nabla_{\alpha} 
                                u^b_n(x)|+ \frac{1}{{h_n}}|\nabla_{3}  u^b_n(x)|
                                \right)dx\quad
                                \Bigg\}< +\infty.}
        \end{equation}
        This estimate, \eqref{hrzero} and the boundary conditions in \eqref{dirichletq} in the definition of \(\mathcal{U}^q_n\)  combined with Poincar\'e's inequality (see \cite{Ziemer}),  lead us to conclude that
$(u_n^a)_n$ is uniformly bounded in \(W^{1,1}(\Omega^a;\RR^3)\)
and 
$(u_n^b)_n$ is uniformly bounded in \(W^{1,1}(\Omega^b;\RR^3)\),
with \(\Vert \nabla_{\alpha}
 u^a_n\Vert_{L^1} \leq C r_n \), \(\Vert \nabla_{3}  u^a_n\Vert_{L^1} \leq C, \)
 \(\Vert \nabla_{\alpha}
 u^b_n\Vert_{L^1} \leq \tilde C\), 
 and \(\Vert \nabla_{3}
 u^b_n\Vert_{L^1} \leq \tilde C h_n \) for some positive constants
 \(C\) and \(\tilde C\) that are independent of \(n\).

 Thus, using the compactness properties in \(BV\) of bounded sequences in \(W^{1,1}\) (see Proposition \ref{compBV}), the  compact embedding of \(W^{1,1}\) in \(L^{1^*}\)
(see \cite{AFP})
we can find an increasing sequence of
        positive integer numbers $(n_i)_{i \in \mathbb N}$ and 
        functions $ {u}^a \in BV(\Omega^a;\RR^3)$ and  $ {u}^b \in BV(\Omega^b;\RR^3)$     for which \eqref{weakdefinitive} holds and, in addition,  $ u^a_{n_i}$ converges to  $u^a$ in $L^{1^*}(\Omega^a;\mathbb R^3)$ and weakly* in $BV(\Omega^a;\mathbb R^3)$,  $u^b_{n_i}$ converges to  $u^b$ in $L^{1^*}(\Omega^b;\mathbb
R^3)$, and weakly* in $BV(\Omega^b;\mathbb R^3)$ as $i \to \infty$.
\begin{equation*}
\begin{aligned}
&D_{\alpha} {u}^a =({\bf 0},{\bf 0}) \text{
a.e.~in } \Omega^a, \\
&D_{3}
 u^b = {\bf 0} \text{
a.e.~in } \Omega^b.
\end{aligned}
\end{equation*}
In particular,  $ u^a$ is independent of
        $x_\alpha$ and  $ {u}^b$ is independent of
        $x_3$, which allows us to identify \((u^a, u^b)\) with an element in $V$.
  \end{proof}

\begin{Proposition}[Lower bound $q$]
    \label{lb}
    Consider the Borel function  \(W : \mathbb{R}^{3\x 3}\to 
    \mathbb{R}\) satisfying  \eqref{coerci}.  
    For  $n \in \mathbb N$ let \(r_n\), \(h_n\) be such that \eqref{hrzero} and \eqref{ell}, with $\ell =q \in (0,+\infty)$. Let 
    \(\mathcal{U}_n^q\) and \(V\) be the spaces introduced in \eqref{Vn} and \eqref{eq:spazioZ},
    respectively where the Dirichlet conditions in $\U_n^q$ are as in \eqref{dirichletq}.
    Let $F_n^q$  be the energy functional defined in \eqref{eq:Fnq},
    with the loads $H_n^{a,q},H_n^{b,q}$ which satisfy \eqref{loadq}.

    Assume also that the recession functions of $\hat W$ and of $W_0$ in \eqref{hatW} and \eqref{W0}, respectively, defined as in \eqref{S101rec}, satisfy \eqref{Whatrec} and \eqref{W0rec}.
     
    Then for every  $(u^a, u^b) \in V$, 
    \begin{align*}
        \inf\Big\{&\liminf_n F_n^q(u_n^a, u_n^b): (u_n^a, u_n^b)\in \mathcal U_n^q, 
        \\
        &u_n^a \weakstar u^a \hbox{ in }BV(\Omega^a;\mathbb R^3),
        u_n^b  \weakstar u^b \hbox{ in }BV(\Omega^b;\mathbb R^3)\Big\}\geq
        K^{a,q}({u}^a)+ q
        K^{b,q}({u}^b),
    \end{align*}
    where $K^{a,q}$ and $K^{b,q}$ are defined in \eqref{def:Ka} and \eqref{def:Kb}.
\end{Proposition}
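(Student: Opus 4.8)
The plan is to establish the lower bound for the two decoupled pieces separately, since the rescaled energy $F_n^q$ in \eqref{eq:Fnq} splits as $K_n^{a,q}(u_n^a)+\frac{h_n}{r_n^2}K_n^{b,q}(u_n^b)$ with $\frac{h_n}{r_n^2}\to q$. Fix $(u^a,u^b)\in V$ and a sequence $(u_n^a,u_n^b)\in\mathcal U_n^q$ with $u_n^a\weakstar u^a$, $u_n^b\weakstar u^b$; passing to a subsequence we may assume $\liminf_n F_n^q$ is attained as a limit and is finite, so the compactness estimates from Proposition \ref{Compactnessres1} apply. Discarding the loads (continuous perturbation, Remark \ref{remloads}) and using Lemma \ref{lemma29} we may take $W$ quasiconvex hence continuous. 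The bulk term for the ``above'' part is $\int_{\Omega^a}W\bigl(\tfrac1{r_n}\nabla_\alpha u_n^a\mid\nabla_3 u_n^a\bigr)dx$; here $\tfrac1{r_n}\to\infty$ forces (via coercivity \eqref{coerci} and the bound $\|\nabla_\alpha u_n^a\|_{L^1}\le Cr_n$) the $\alpha$-gradient to carry negligible energy, so one bounds $W(\tfrac1{r_n}\nabla_\alpha u_n^a\mid\nabla_3 u_n^a)\ge \hat W(\nabla_3 u_n^a)$ directly from the definition \eqref{hatW} of $\hat W$ as an infimum over the $\alpha$-block. For the ``below'' part the scaling $\tfrac1{h_n}\to\infty$ on $\nabla_3 u_n^b$ analogously yields $W(\nabla_\alpha u_n^b\mid\tfrac1{h_n}\nabla_3 u_n^b)\ge W_0(\nabla_\alpha u_n^b)$ by \eqref{W0}.

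\textbf{Reduction to a $1$D and a $2$D relaxation with boundary term.} After these pointwise reductions the problem becomes: show $\liminf_n\int_{\Omega^a}\hat W(\nabla_3 u_n^a)\,dx\ge K^{a,q}(u^a)$ and $\liminf_n\int_{\Omega^b}W_0(\nabla_\alpha u_n^b)\,dx\ge K^{b,q}(u^b)$, where on the right we see the convexified/quasiconvexified integrands $\hat W^{**}$, $QW_0$, their recession functions in the sense \eqref{S101rec}, the singular (Cantor + jump) contributions, and crucially the boundary-penalization terms $(\hat W^{**})^\infty(c^{a,q}-u^a(1^-))$ and $\int_{\partial\omega\times]-1,0[}(QW_0)^\infty\bigl((f^{b,q}-(u^b)^-)\otimes\nu\bigr)d\mathcal H^2$ coming from the Dirichlet conditions at the top of $\Omega^a$ and the lateral boundary of $\Omega^b$ that are \emph{not} preserved in the BV weak-$*$ limit. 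This is exactly the setting of \cite{BFMTraces}: relaxation in $BV$ of an integral functional with linear growth under a trace constraint. The strategy is to extend $u_n^a$ (resp.\ $u_n^b$) by the boundary datum across the Dirichlet portion of $\partial\Omega^a$ (resp.\ $\partial\Omega^b$) into a slightly larger domain, thereby encoding the boundary mismatch as an interior jump of the extended field; then apply the lower-semicontinuity/relaxation theorem of \cite{BFMTraces} (this is where \eqref{Whatrec}, \eqref{W0rec} enter, guaranteeing the strong recession functions of $\hat W^{**}$ and of $QW_0$ exist and the representation holds). One must also pass from $n$-dependent integrands (they are genuinely $n$-dependent only through the eliminated $\alpha$- resp.\ $3$-block, so after the reduction they are the fixed integrands $\hat W$, $W_0$) and recover the factor $q=\lim_n\frac{h_n}{r_n^2}$ in front of the ``below'' contribution.

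\textbf{Assembling and the main obstacle.} Finally, since $\Omega^a=\omega\times]0,1[$ has unit cross-section and $u^a$ depends only on $x_3$, the $2$D integral reduces to the $1$D integral over $]0,1[$ appearing in \eqref{def:Ka}, and similarly $\int_{\Omega^b}$ with $u^b$ independent of $x_3$ reduces to $\int_\omega$ in \eqref{def:Kb}; adding the two inequalities with the weights $1$ and $q$ gives the claim. I expect the main obstacle to be the careful treatment of the boundary term: one has to justify that the loss of the Dirichlet condition in the weak-$*$ limit is captured \emph{exactly} by $(\hat W^{**})^\infty(c^{a,q}-u^a(1^-))$ (resp.\ the surface integral on $\partial\omega\times]-1,0[$), which requires the blow-up / slicing analysis near the relevant portion of the boundary together with the trace-relaxation machinery of \cite{BFMTraces}, and to handle the interplay between the anisotropic rescaling (which first must be shown to produce, in the limit, precisely $\hat W$ and $W_0$ rather than some intermediate object) and the passage to convex/quasiconvex envelopes and their recession functions. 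The Cantor part requires an additional localization argument (choosing a good subsequence so that the measures $|D_3^s u_n^a|$, $|D_\alpha^s u_n^b|$ do not charge a negligible set) but is otherwise standard once the relaxation theorem is in place.
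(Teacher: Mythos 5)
Your proposal is correct and follows essentially the same route as the paper: the pointwise inequalities $W(\tfrac1{r_n}\nabla_\alpha u_n^a\mid\nabla_3 u_n^a)\ge \hat W(\nabla_3 u_n^a)$ and $W(\nabla_\alpha u_n^b\mid\tfrac1{h_n}\nabla_3 u_n^b)\ge W_0(\nabla_\alpha u_n^b)$ straight from the infimum definitions \eqref{hatW}, \eqref{W0}, then passage to $\hat W^{**}$ and $QW_0$, superadditivity of the $\liminf$ with the factor $q=\lim_n h_n/r_n^2$, and finally the trace-constrained relaxation result of \cite{BFMTraces} (Example 4.1) to produce the singular parts and the boundary-penalization terms. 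The only cosmetic difference is that your appeal to coercivity to justify the first pointwise bound is unnecessary (the inequality holds for any value of the $\alpha$-block), which you in fact acknowledge by then invoking the infimum definition directly.
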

\begin{proof}
    [Proof.] As in the proof of Proposition \ref{Compactnessres1}, we assume for the sake of exposition, that the loads are null.

Considering the convergences in \eqref{weakdefinitive},  take a sequence $(u^a_n, u^b_n)\in \mathcal U^q_n$ for which
\begin{align*}
\liminf_{n\to \infty}\left(\int_{{\Omega}^a}
 W\left(\displaystyle{\frac{1}{r_n}\nabla_{\alpha}u^{a}_n(x)} \;\; | \;\;
\displaystyle{\nabla_{3}u^{a}_n(x)}\right)dx +\frac{h_n}{r_n^2}\int_{{\Omega}^b}
W\left(\displaystyle{\nabla_{\alpha}u^b_n(x)} \;\; | \; \;
\displaystyle{\frac{1}{h_n}\nabla_{3}u^b_n(x)}
\right)dx\right)
<+\infty.
\end{align*}
We want to provide a lower bound.
Then it is easily seen that

\begin{align}
\nonumber
\liminf_{n\to \infty}\left(\int_{{\Omega}^a}
 W\left(\displaystyle{\frac{1}{r_n}\nabla_{\alpha}u^{a}_n(x)} \;\; | \;\;
\displaystyle{\nabla_{3}u^{a}_n(x)}\right)dx +\frac{h_n}{r_n^2}\int_{{\Omega}^b}
W\left(\displaystyle{\nabla_{\alpha}u^b_n(x)} \;\; | \; \;
\displaystyle{\frac{1}{h_n}\nabla_{3}u^b_n(x)}
\right)dx\right) 
\geq \\
\nonumber
\liminf_{n\to \infty}\left(\int_{{\Omega}^a}
 \hat W\left(
\displaystyle{\nabla_{3}u^{a}_n(x)}\right)dx +\frac{h_n}{r_n^2}\int_{{\Omega}^b}
W_0\left(\displaystyle{\nabla_{\alpha}u^b_n(x)}
\right)dx\right)
\geq \\
\nonumber
\liminf_{n\to \infty}\left(\int_{{\Omega}^a}
 \hat W^{\ast \ast}\left(
\displaystyle{\nabla_{3}u^{a}_n(x)}\right)dx +\frac{h_n}{r_n^2}\int_{{\Omega}^b}
Q W_0\left(\displaystyle{\nabla_{\alpha}u^b_n(x)}
\right)dx\right)
\geq \\
\label{lineremark}
 \liminf_{n\to \infty}\int_{{\Omega}^a}
 \hat W^{**}\left(
\displaystyle{\nabla_{3}u^{a}_n(x)}\right)dx + q \liminf_{n\to \infty}\int_{{\Omega}^b}
QW_0\left(\displaystyle{\nabla_{\alpha}u^b_n(x)}
\right)dx,
\end{align}

\noindent with $\hat W$ and $W_0$ as in \eqref{hatW} and \eqref{W0}.
We observe also that in the last but one inequality, we have exploited the following estimates $\hat W \geq \hat W^{**}$ and $W_0 \geq Q W_0$ and in the latter, the superadditivity of $\liminf$.

Moreover, following \cite[page 556]{LeDR1}, the function $QW_0$ can be extended into a function $Z: \mathbb R^3 \times \mathbb R^3$  which is quasiconvex. We recall that, by \cite[Proposition 2.6]{BFMTraces}, $(QW_0)^\infty= Q(W_0^\infty)$ and $((\hat W)^{\ast \ast})^\infty= ((\hat W)^\infty)^{\ast \ast}$, and the analogous properties of \eqref{Whatrec} and \eqref{W0rec} hold. Notice that $u_n^a|_{\omega\x\{1\}}=c^{a,q}$ and $u_n^b|_{\partial\omega\x]-1,0[}=f^{b,q}$.  We apply now
\cite[Example 4.1]{BFMTraces} in both terms to the functionals $W^{1,1}(\Omega^a;\mathbb R^3)  \ni u^a \to E(u^a):= \int_{\Omega^a} \hat W^{\ast \ast}(\nabla_3 u^a)dx + \varepsilon \|\nabla_\alpha u^a\|_{L^1(\Omega^a}$ and $W^{1,1}(\Omega^a;\mathbb R^3)  \ni u^b \to E(u^b):= \int_{\Omega^b} QW_0(\nabla_\alpha u^b) dx+ \|\nabla_3 u^b\|_{L^1(\Omega^b)} $to obtain 
\begin{align*}
&\liminf_{n\to \infty}\int_{{\Omega}^a}
 \hat W^{**}\left(
\displaystyle{\nabla_{3}u^{a}_n(x)}\right)dx + q \liminf_{n\to \infty}\int_{{\Omega}^b}
QW_0\left(\displaystyle{\nabla_{\alpha}u^b_n(x)}
\right)dx \geq 
\\
\geq&\;\int_{{\Omega}^a}
 \hat W^{**}\left(
\displaystyle{\nabla_{3}u^{a}(x_3)}\right)dx + \int_{\Omega^a} (\hat W^{**})^\infty \left(\frac{d D^s_\alpha u^a}{d |D^s_\alpha u^a|}\right) d |D^s_\alpha u^a | +(\hat W^{\ast \ast})^\infty(c^{a,q}- u^a(1^-))+ 
\\
+\;&q \left(\int_{{\Omega}^b}
QW_0\left(\displaystyle{\nabla_{\alpha}u^b(x_\alpha)}
\right)dx +\int_{{\Omega}^b}
(QW_0)^\infty\left(\displaystyle{\frac{d D^s_{\alpha}u^b}{ d |D^s_\alpha u^b|}}\right)d |D^s_\alpha u^b| + \right. 
\\
+\;&\left.\int_{\partial \omega \times ]-1,0[}(Q W_0)^\infty ((f^{b,q}- (u^b)^-)\otimes \nu_{\partial \omega \times ]-1,0[}) d \mathcal H^2 \right) = K^{a,q}(u^a)+ q K^{b,q}(u^b).
\end{align*}
 This concludes the proof, observing that the $\nu_{\partial \omega \times ]-1,0[}$ represents the normal to the discontinuity set of $u^b$.
\end{proof}

Let \begin{equation}
    \label{Udense}
        U:=\left \{(u^a, u^b)
\in(c^{a,q} + C^\infty_a([0,1];\mathbb R^3))\times (f^{b,q} + C^\infty_0(\omega;\mathbb R^3)), 
u^a(0) = u^b(0_\alpha)\right\} 
\end{equation}
with $C^\infty_a([0, 1];\mathbb R^3) =
\{u^a \in C^\infty([0, 1];\mathbb R^3) : u^a(1) = 0\}.$

\begin{Proposition}
        \label{density1storder}
        Let $U$ be as in \eqref{Udense}. Then it is dense in
        $\left(c^{a,q}+W^{1,1}_a(]0,1[;\mathbb R^3)\right) \x$ \\ $  
      \left(f^{b,q}+W^{1,1}_0(\omega;\mathbb R^3)\right) $
with respect to the strong topology of  $W^{1,1}(]0,1[;\mathbb R^3)\times W^{1,1}(\omega;\mathbb R^3)$.
\end{Proposition}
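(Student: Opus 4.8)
The plan is to prove the density claim by treating the two factors of the product space essentially separately, being careful only about the single coupling constraint $u^a(0) = u^b(0_\alpha)$ at the junction point. First I would fix a target pair $(v^a, v^b) \in \left(c^{a,q}+W^{1,1}_a(]0,1[;\mathbb R^3)\right) \times \left(f^{b,q}+W^{1,1}_0(\omega;\mathbb R^3)\right)$ and recall that, by definition of the spaces $W^{1,1}_a$ and $W^{1,1}_0$ in Section \ref{The re-scalings}, there exist sequences $(w^a_k) \subset C^\infty_a([0,1];\mathbb R^3)$ with $c^{a,q}+w^a_k \to v^a$ strongly in $W^{1,1}(]0,1[;\mathbb R^3)$, and $(w^b_k) \subset C^\infty_0(\omega;\mathbb R^3)$ with $f^{b,q}+w^b_k \to v^b$ strongly in $W^{1,1}(\omega;\mathbb R^3)$. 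In one dimension $W^{1,1}(]0,1[;\mathbb R^3)$ embeds continuously into $C([0,1];\mathbb R^3)$, so the traces $v^a(0)$, $w^a_k(0)$, $v^b(0_\alpha)$ make sense pointwise; and $v^a(0) = v^b(0_\alpha)$ holds since $(v^a,v^b)$ lies in the relevant trace-constrained space (the compatibility condition \eqref{eq:bc} passes to the limit). The only issue is that $(c^{a,q}+w^a_k)(0)$ need not equal $(f^{b,q}+w^b_k)(0_\alpha)$.

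The key step is therefore a correction near the junction point. Since $f^{b,q} \in W^{1,1}(\omega;\mathbb R^3)$ need not be continuous, I would first replace $w^b_k$ by the smooth approximants and note that by \eqref{f=0=g} we may assume $f^{b,q} = {\bf 0}$ on a small ball $B$ with $0_\alpha \in B \subset\subset \omega$; hence near $0_\alpha$ the $b$-component is the smooth function $w^b_k$, whose value at $0_\alpha$ converges. Set $\delta_k := w^a_k(0) + c^{a,q} - w^b_k(0_\alpha) \to 0$ as $k\to\infty$ (using that $v^a(0)=v^b(0_\alpha)$). I would then modify $w^b_k$ by adding a cutoff bump: pick $\eta \in C^\infty_c(B)$ with $\eta(0_\alpha)=1$ and, for a radius $\rho_k \to 0$ to be chosen, replace $w^b_k$ by $w^b_k + \eta(x_\alpha/\rho_k)\,\delta_k$. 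This new function is still in $C^\infty_0(\omega;\mathbb R^3)$ (support inside $B$), it has value at $0_\alpha$ equal to $w^b_k(0_\alpha)+\delta_k = w^a_k(0)+c^{a,q}$, so the junction condition now holds, and its $W^{1,1}$-distance to $w^b_k$ is bounded by $\|\eta(\cdot/\rho_k)\delta_k\|_{L^1} + \|\nabla(\eta(\cdot/\rho_k))\delta_k\|_{L^1} \le C\rho_k^2|\delta_k| + C\rho_k|\delta_k|$, which tends to $0$ for any choice of $\rho_k$, e.g. $\rho_k\equiv$ const or $\rho_k\to 0$. Thus the corrected pair $(c^{a,q}+w^a_k,\ f^{b,q}+\tilde w^b_k)$ lies in $U$ and still converges to $(v^a,v^b)$ strongly in $W^{1,1}(]0,1[;\mathbb R^3)\times W^{1,1}(\omega;\mathbb R^3)$.

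The main obstacle — really the only subtle point — is making sure the correction bump does not destroy the boundary conditions encoded in $W^{1,1}_0(\omega;\mathbb R^3)$ (vanishing near $\partial\omega$, in the identified $2$D picture; equivalently $w^b = {\bf 0}$ near $\partial\omega\times]-1,0[$ in the $3$D picture) and in $C^\infty_a([0,1];\mathbb R^3)$ (vanishing near $1$), while simultaneously fixing the value at the junction. Localizing the bump inside $B \subset\subset \omega$ handles the $\partial\omega$ constraint automatically, and since we only modify the $b$-component the constraint at $x_3=1$ for the $a$-component is untouched; the scaling estimate above shows the perturbation is $W^{1,1}$-small. I would close the argument with a standard diagonal extraction to pass from the double-indexed family to a single sequence in $U$, and remark that the convergence of traces at $0$ (used to get $\delta_k\to 0$) follows from the continuous embedding $W^{1,1}(]0,1[;\mathbb R^3)\hookrightarrow C([0,1];\mathbb R^3)$ together with the fact that $v^b$ coincides, near $0_\alpha$, with the $W^{1,1}$-limit of the smooth $w^b_k$, whose values at $0_\alpha$ converge to $v^b(0_\alpha)$.
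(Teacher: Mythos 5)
There is a genuine gap, and it stems from a misreading of what the proposition asserts. The target space $\left(c^{a,q}+W^{1,1}_a(]0,1[;\mathbb R^3)\right)\times\left(f^{b,q}+W^{1,1}_0(\omega;\mathbb R^3)\right)$ carries \emph{no} junction constraint: the whole content of the statement is that the coupled set $U$ is dense in the fully \emph{decoupled} product. Your step ``$v^a(0)=v^b(0_\alpha)$ holds since $(v^a,v^b)$ lies in the relevant trace-constrained space'' is therefore unjustified, and in fact the quantity $v^b(0_\alpha)$ is not even well defined: $v^b$ lives on the two-dimensional set $\omega$, where $W^{1,1}$ does \emph{not} embed into the continuous functions (e.g.\ $\log|x_\alpha|\in W^{1,1}$ near $0_\alpha$), so point evaluation at $0_\alpha$ is meaningless and is not continuous along the smooth approximants $w^b_k$. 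Consequently your claim $\delta_k=w^a_k(0)+c^{a,q}-w^b_k(0_\alpha)\to 0$ has no basis — the sequence $(w^b_k(0_\alpha))_k$ can be unbounded — and the assertion that the correction bump is $W^{1,1}$-small ``for any choice of $\rho_k$'' fails, since its gradient contributes $\approx C\rho_k|\delta_k|$.

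The repair is exactly the mechanism the proposition is really about, namely that a point in a two-dimensional domain has zero $W^{1,1}$-capacity: the mismatch $\delta_k$ need not be small, only the \emph{cost} of correcting it, and that cost $C(\rho_k^2+\rho_k)|\delta_k|$ can be driven to zero for \emph{arbitrary} (even unbounded) $\delta_k$ by choosing $\rho_k$ small enough relative to $|\delta_k|$, e.g.\ $\rho_k\le(1+|\delta_k|)^{-2}$, while keeping the bump supported in the ball $B$ of \eqref{f=0=g}. With that single modification your construction does work and is in the same spirit as the paper's, which (following Proposition 3.1 of \cite{GGLM1}, case $p<N-1$) replaces $u^b$ by the constant $u^a(0)$ on a small ball $\overline B(0_\alpha,\varepsilon_n)$ and interpolates via a cut-off on an annulus $\overline B(0_\alpha,\eta_n)\setminus\overline B(0_\alpha,\varepsilon_n)$ whose radii are tuned precisely so that the gradient of the cut-off has vanishing $L^1$ mass. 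I would also drop the appeal to ``\eqref{eq:bc} passes to the limit'': condition \eqref{eq:bc} is imposed on the shrinking set $r_n\omega\times\{0\}$ and leaves no trace on the limit pair, which is why the limit domains decouple here.
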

\begin{proof}
        It follows as the case $p < N-1$ in Proposition 3.1 of \cite{GGLM1}, with $v^{(1)}$ and $v^{(2)}$ therein, coinciding with $u^a$ and $u^b$, respectively. Indeed, we rely  on equations (3.2), (3.3) and (3.4) in Proposition 3.1 of \cite{GGLM1}, considering a sequence of cut-off functions  $\varphi_n$ as in (3.3) of class $C^\infty$ with the same radii as in (3.2), so that (3.4) holds. Hence the sequences $$u^a_n(x_3):= u^a(x_3) \hbox{ for  }x_3 \in ]0,1[,$$ and $$u^b_n(x_\alpha):=\left\{\begin{array}{ll}
        u^a(0) &\hbox{ in } \overline B(0_\alpha, \varepsilon_n),\\
        \varphi_n u^a(0)+ (1-\varphi_n(x_\alpha))u^b(x_\alpha) & \hbox{ in }\overline B(0_\alpha, \eta_n)\setminus \overline B(0_\alpha, \varepsilon_n),\\
        u^b(x_\alpha) &\hbox{ in }\omega \setminus \overline B(0_\alpha, \eta_n)
        \end{array}\right. $$ for $x_\alpha \in \omega$, converge, as in the assertion, to $u^a$ and $u^b$, respectively.

\end{proof}

\begin{Proposition}\label{ub1stestimateU}
Under the same assumptions of Proposition \ref{lb}, for every $(u^a, u^b) \in U$ in \eqref{Udense},
\begin{align}
\inf\Big\{&\limsup_n F_n^q(u_n^a,u_n^b): (u_n^a, u_n^b)\in \mathcal U_n^q, \; u_n^a \overset{\ast}{\rightharpoonup} u^a \hbox{ in }BV(\Omega^a;\mathbb R^3), \nonumber\\
& u_n^b  \overset{\ast}{\rightharpoonup} u^b \hbox{ in }BV(\Omega^b;\mathbb R^3)\Big\}\leq
\int_0^1 \hat W(\nabla_3{u}^a(x_3))dx_3 + q
\int_\omega W_0(\nabla_\alpha {u}^b(x_\alpha))dx_\alpha+ \nonumber\\
&\int_{\Omega^a} H(x) \cdot u^a(x_3) dx + \int_{\Omega^b }H (x)\cdot u^b(x_\alpha) dx.
\label{ub1st}
\end{align}
\end{Proposition}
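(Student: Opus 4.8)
The plan is to produce, for each $(u^a,u^b)\in U$, an explicit recovery sequence $(u_n^a,u_n^b)\in\mathcal U_n^q$ that (i) converges weakly* in $BV$ (actually strongly in $W^{1,1}$) to $(u^a,u^b)$, and (ii) has energy converging to the right-hand side of \eqref{ub1st}. Since the elements of $U$ are smooth (up to the fixed Dirichlet data $c^{a,q}$, $f^{b,q}$) and already satisfy the junction condition $u^a(0)=u^b(0_\alpha)$, the natural first attempt is to simply take $u_n^a(x_\alpha,x_3):=u^a(x_3)$ on $\Omega^a$ and $u_n^b(x_\alpha,x_3):=u^b(x_\alpha)$ on $\Omega^b$; the only defect is that these do not lie in $\mathcal U_n^q$ because the junction condition \eqref{eq:bc}, $u_n^a(x_\alpha,0)=u_n^b(r_nx_\alpha,0)$, forces $u^a(0)=u^b(r_nx_\alpha)$ for a.e.\ $x_\alpha$, which fails unless $u^b$ is constant near $0_\alpha$. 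So the correction is to modify $u_n^b$ on a small disk $\overline B(0_\alpha,\eta_n)$, exactly in the spirit of Proposition \ref{density1storder}: set $u_n^b$ equal to the constant $u^a(0)$ on $\overline B(0_\alpha,\varepsilon_n)$ with $r_n\ll\varepsilon_n\ll\eta_n\to 0$, interpolate via a cut-off $\varphi_n$ on the annulus, and keep $u^b$ outside $\overline B(0_\alpha,\eta_n)$. Then \eqref{eq:bc} holds since $r_nx_\alpha\in\overline B(0_\alpha,\varepsilon_n)$ for $x_\alpha\in\omega$ and $n$ large.

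The key steps, in order, are the following. First, I would invoke Proposition \ref{density1storder} to replace the general target in $(c^{a,q}+W_a^{1,1})\times(f^{b,q}+W_0^{1,1})$ by an element of $U$, and note that it suffices to prove \eqref{ub1st} for $(u^a,u^b)\in U$ because the left-hand side of \eqref{ub1st} is an infimum over recovery sequences and the right-hand side, being $u\mapsto\int_0^1\hat W(\nabla_3 u^a)+q\int_\omega W_0(\nabla_\alpha u^b)+\text{(loads)}$, is continuous along the $W^{1,1}$-strong convergence supplied by that proposition using the growth bound \eqref{coerci} (dominated convergence). Second, with $(u^a,u^b)\in U$ fixed, I would define $u_n^a(x):=u^a(x_3)$ and $u_n^b$ as in Proposition \ref{density1storder}, check membership in $\mathcal U_n^q$ (the Dirichlet data $c^{a,q}$ at $\omega\times\{1\}$ and $f^{b,q}$ at $\partial\omega\times]-1,0[$ are untouched since $\varepsilon_n,\eta_n\to 0$ with $B\subset\subset\omega$, and \eqref{eq:bc} holds as explained), and check $u_n^a\to u^a$, $u_n^b\to u^b$ in $W^{1,1}$, hence $\weakstar$ in $BV$.

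Third comes the energy computation. For the $a$-part, $\nabla_\alpha u_n^a=({\bf 0},{\bf 0})$ and $\nabla_3 u_n^a=\nabla_3 u^a$, so $K_n^{a,q}(u_n^a)=\int_{\Omega^a}W({\bf 0}|{\bf 0}|\nabla_3 u^a)\,dx$; I would like this to be $\int_0^1\hat W(\nabla_3 u^a)\,dx_3$, so either one should insert an extra layer of the construction choosing the near-minimizing $\xi_\alpha$ in \eqref{hatW} — i.e.\ replace $u_n^a$ by $u^a(x_3)+r_n v^a(x_\alpha,x_3)$ with $\nabla_\alpha v^a$ realizing $\hat W$ up to $\varepsilon$ — or, since $W$ may be taken quasiconvex hence continuous (Lemma \ref{lemma29}, Remark \ref{refremweak}) and $\hat W\le W(\cdot|\nabla_3 u^a)$ pointwise with equality only after optimizing $\xi_\alpha$, one builds the oscillatory corrector; the standard device is: pick measurable $\xi_\alpha(x_3)$ with $W(\xi_\alpha(x_3)|\nabla_3 u^a(x_3))\le\hat W(\nabla_3 u^a(x_3))+\varepsilon$, mollify, set $u_n^a:=u^a+r_n\psi_n$ with $\nabla_\alpha\psi_n\to\xi_\alpha$, giving $\tfrac1{r_n}\nabla_\alpha u_n^a\to\xi_\alpha$ and $K_n^{a,q}(u_n^a)\to\int_0^1 W(\xi_\alpha|\nabla_3 u^a)\le\int_0^1\hat W(\nabla_3 u^a)+\varepsilon$. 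For the $b$-part, $\tfrac{h_n}{r_n^2}\to q$ and $\tfrac1{h_n}\nabla_3 u_n^b={\bf 0}$, so the bulk term tends to $q\int_\omega W(\nabla_\alpha u^b|{\bf 0})$, which one again optimizes to $q\int_\omega W_0(\nabla_\alpha u^b)$ via an analogous corrector $+h_n\eta_n^b$ in the $x_3$ direction; the contribution from the annulus $\overline B(0_\alpha,\eta_n)\setminus\overline B(0_\alpha,\varepsilon_n)$ must be shown to vanish, which follows from $|\nabla_\alpha u_n^b|\le C/\eta_n$ there (the cut-off $\varphi_n$) times area $O(\eta_n^2)$, so $O(\eta_n)\to 0$, plus the factor $h_n/r_n^2$ bounded. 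Finally the load terms converge by \eqref{loadq} and $u_n^{a,b}\to u^{a,b}$ in $L^1$. Taking $\varepsilon\to 0$ and a diagonal subsequence closes the estimate.

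The main obstacle is the junction region: one must simultaneously (a) enforce \eqref{eq:bc} exactly, which pins $u_n^b$ to the constant $u^a(0)$ on a disk of radius $\gtrsim r_n$, (b) keep that modification's energy cost negligible after multiplication by $h_n/r_n^2\to q$, and (c) not interfere with the $\hat W$- and $W_0$-optimizing correctors. The radii must be chosen in the regime $r_n\ll\varepsilon_n\ll\eta_n\to 0$ with $h_n\eta_n/r_n^2\to 0$ (possible since $h_n/r_n^2\to q<\infty$, so one just needs $\eta_n\to 0$), and one checks the gradient of the interpolation is $O(1/\eta_n)$ so the annulus energy is $O(h_n\eta_n/r_n^2+\eta_n)\to 0$; this is precisely the computation already carried out in \cite[Proposition 3.1]{GGLM1} invoked by Proposition \ref{density1storder}, so the cleanest exposition is to state the construction, point to that reference for the junction estimate, and do only the corrector/energy bookkeeping in detail.
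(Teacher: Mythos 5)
Your proposal is correct and follows essentially the same strategy as the paper: an explicit recovery sequence consisting of the target profiles plus correctors that are affine in the transverse variables (of the form $r_n z^a(x_3)x_\alpha$ above and $h_n x_3 z^b(x_\alpha)$ below, with $z^a,z^b$ vanishing where the boundary and junction data are prescribed), followed by infimization over $z^a,z^b$ via measurable selection and density of $W^{1,1}_0$ in $L^1$ to reach $\hat W$ and $W_0$ in \eqref{hatW}--\eqref{W0}. The one genuine difference is where the junction condition \eqref{eq:bc} is enforced: you freeze $u_n^b$ to the constant $u^a(0)=u^b(0_\alpha)$ on a shrinking disk containing $r_n\omega$ (reusing the construction of Proposition \ref{density1storder}), whereas the paper leaves $u_n^b$ untouched and instead interpolates $u_n^a$ linearly in $x_3$ on the thin layer $\omega\times]0,\varepsilon_n[$ between $u^b(r_nx_\alpha)$ at $x_3=0$ and the bulk expression at $x_3=\varepsilon_n$. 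Both gluings cost vanishing energy (yours $O(\eta_n)$ in the rescaled $b$-energy since $u^b$ is Lipschitz with $u^b(0_\alpha)=u^a(0)$; the paper's $O(\varepsilon_n)$ in the $a$-energy provided $r_n\ll\varepsilon_n$), so this is a matter of bookkeeping rather than substance. Two small remarks: the preliminary density reduction in your first step is unnecessary for this proposition, which is stated only for $(u^a,u^b)\in U$ in \eqref{Udense} — the passage to general data is carried out in Proposition \ref{2ndestimateub} via lower semicontinuity and relaxation rather than via continuity of the unrelaxed right-hand side; and you should state explicitly that the corrector in the rod must vanish at both $x_3=0$ and $x_3=1$ (i.e.\ $z^a\in W^{1,1}_0(]0,1[)$), so that neither the junction condition nor the Dirichlet datum $c^{a,q}$ is perturbed.
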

\begin{proof}[Proof]  As in Proposition \ref{lb}
 we assume that the loads are null, so we can avoid to consider them in the analysis below.
 
Following the strategy in \cite[Section 5]{GZNODEA}, for every $(u^a, u^b)\in U$ and $z^a \in W^{1,1}_0(]0,1[;\mathbb R^3)$ and $z^b \in W^{1,1}_0(\omega;\mathbb R^3)$ we define
\begin{align*}
u_n^a(x_\alpha, x_3):=\left\{ \begin{array}{ll} (z^a (\varepsilon_n )r_n x_\alpha
+ u^a(\varepsilon_n)) \frac{x_3}{\varepsilon_n} + u^b(r_n x_\alpha) \frac{\varepsilon_n-x_3}{\varepsilon_n} &\hbox{ if } (x_\alpha, x_3)\in \omega \times ]0,\varepsilon_n[,\\
z^a(x_3) r_n x_\alpha + u^a(x_3) &\hbox{ if }(x_\alpha, x_3) \in \omega \times [\varepsilon_n, 1[,
\end{array}\right.
\end{align*}
\begin{align*}
u^b_n(x_\alpha, x_3):= h_n x_3 z^b(x_\alpha)+ u^b(x_\alpha) \hbox{ if } (x_\alpha, x_3) \in \Omega^b,
\end{align*}

for a suitable sequence $\varepsilon \to 0^+$ (cf. \cite{GZNODEA}).
Arguing as in \cite[(4.11), (4.12), (4.5) in Proposition 4.1]{GGLM1},  for $p=1$, it results that 
\begin{align}
u^a_n \to u^a &\hbox{ strongly in }L^1(\Omega^a;\mathbb R^3)\nonumber
\\
\frac{1}{r_n}\nabla_\alpha u^a_n \to z^a &\hbox{ strongly in } L^1(\Omega^a; \mathbb R^{3\x2}), \label{convubaquote}\\
\nabla_3 u^a_n \to \nabla_3 u^a &\hbox{ strongly in }L^{1}(\Omega^a;\mathbb R^3).\nonumber
\end{align}
Moreover, as in \cite{LeDR1},
\begin{align}
u^b_n \to u^b &\hbox{ strongly in }L^1(\Omega^b;\mathbb R^3),\nonumber\\
\nabla_\alpha u^b_n \to \nabla_\alpha u^b &\hbox{ strongly in } L^1(\Omega^b; \mathbb R^{3\x 2}),\label{convubbquote}\\
\frac{1}{h_n} \nabla_3 u^b_n = z^b &\hbox{ in }\Omega^b, \nonumber
\end{align}
\begin{align}\label{limenquote}
\lim_{n\to \infty} \left(K^{a,q}_n(u^a_n)+\frac{h_n}{r_n^2} K^{b,q}_n(u^b_n)\right) =\int_{\Omega^a} W( z^a(x_3), \nabla_3 u^a(x_3))dx + q \int_{\Omega^b} W(\nabla_\alpha u^b(x_\alpha), z^b(x_\alpha)) dx.
\end{align}

Recalling that  $z^a \in  W^{1,1}_0(]0,1[;\mathbb R^3)$ and $z^b \in W^{1,1}_0(\omega;\mathbb R^3)$ can be arbitrarily chosen, 
then arguing as in the proof of \cite[Theorem 5.1]{GZNODEA}, replacing the spaces $W^{1,p}_0(]0,1[;\mathbb R^3)$ and $W^{1,p}_0(\omega;\mathbb R^3)$ by $W^{1,1}_0(]0,1[;\mathbb R^3)$ and $ W^{1,1}_0(\omega;\mathbb R^3)$ and the spaces $L^p(]0,1[;\mathbb R^3)$ and $L^p(\omega;\mathbb R^3)$ by $L^1(]0,1[;\mathbb R^3)$ and $L^1(\omega;\mathbb R^3)$, respectively, using Aumann'selection lemma, and definitions $W_0$ and $\hat W$ in \eqref{hatW} and \eqref{W0} respectively, 
we obtain
\begin{align}
\inf&\left\{\liminf_n F_n^q(u_n^a, u^b_n): (u_n^a, u^b_n) \in \mathcal U_n^q,  u_n^a \overset{\ast}{\rightharpoonup} u^a \hbox{ in }BV(\Omega^a;\mathbb R^3 ),
 \right. \nonumber\\
&\left. u_n^b  \overset{\ast}{\rightharpoonup} u^b \hbox{ in }BV(\Omega^b;\mathbb R^3 ) 
\right\} \leq \nonumber
\int_0^1 \hat W(\nabla_3 u^a(x_3)) dx_3+ q\int_\omega W_0(\nabla_{\alpha}{u}^b(x_\alpha))dx_\alpha.
\end{align}

The proof is concluded by a standard diagonalization result applying Proposition \ref{density1storder}.

\end{proof}


\begin{Proposition}[Upper bound $q$]\label{2ndestimateub}
Under the same assumptions of Proposition \ref{lb}, for every $(u^a, u^b) \in V$
\begin{align}
\inf\left\{\limsup_n F_n^q(u_n^a, u_n^b): (u_n^a, u_n^b)\in \mathcal U_n^q, u_n^a \overset{\ast}{\rightharpoonup} u^a \hbox{ in }BV(\Omega^a;\mathbb R^3),  \right. \nonumber \\
\left.
u_n^b  \overset{\ast}{\rightharpoonup} u^b \hbox{ in }BV(\Omega^b;\mathbb R^3)\right\}\leq K^{a,q}(u^a)+ q K^{b,q}(u^b),
\label{ub}
\end{align}
where $K^{a,q}$ and $K^{b,q}$ are defined in \eqref{def:Ka} and \eqref{def:Kb}, respectively.
\end{Proposition}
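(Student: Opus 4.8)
The plan is to establish the upper bound inequality \eqref{ub} for a general $(u^a,u^b)\in V$ by reducing to the case of smooth junction-compatible fields already handled in Proposition \ref{ub1stestimateU}. The key structural fact is that the candidate limit functional $K^{a,q}(u^a)+qK^{b,q}(u^b)$, viewed on $V$, coincides with (a constant times) the relaxation in $BV$ under trace constraints of the integral functionals $\int_0^1 \hat W(\nabla_3 u^a)\,dx_3$ and $\int_\omega W_0(\nabla_\alpha u^b)\,dx_\alpha$, thanks to the representation results of \cite{BFMTraces} (cf. \cite[Example 4.1]{BFMTraces}), and to the identities $(QW_0)^\infty = Q(W_0^\infty)$, $((\hat W)^{\ast\ast})^\infty = ((\hat W)^\infty)^{\ast\ast}$ together with the strong-recession assumptions \eqref{Whatrec}, \eqref{W0rec}. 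So first I would invoke that relaxation theory: given $(u^a,u^b)\in V$, there exist sequences $v_k^a\in c^{a,q}+W^{1,1}_a(]0,1[;\RR^3)$ and $v_k^b\in f^{b,q}+W^{1,1}_0(\omega;\RR^3)$ with $v_k^a\weakstar u^a$ in $BV(]0,1[;\RR^3)$, $v_k^b\weakstar u^b$ in $BV(\omega;\RR^3)$, such that
\begin{align*}
\lim_k\!\left(\int_0^1\!\hat W(\nabla_3 v_k^a)\,dx_3+\int_{\partial\{1\}}\!(\hat W^{\ast\ast})^\infty(\cdots)+\cdots\right)\!&=K^{a,q}(u^a),\\
\lim_k\!\left(\int_\omega\! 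W_0(\nabla_\alpha v_k^b)\,dx_\alpha+\cdots\right)\!&=K^{b,q}(u^b),
\end{align*}
where the omitted terms are the singular and boundary contributions; more precisely one uses that $\int_0^1\hat W(\nabla_3 v_k^a)\,dx_3\to K^{a,q}(u^a)$ and $\int_\omega W_0(\nabla_\alpha v_k^b)\,dx_\alpha\to K^{b,q}(u^b)$ along recovery sequences for the relaxed functionals (here one may freely replace $\hat W$, $W_0$ by $\hat W^{\ast\ast}$, $QW_0$ inside the Sobolev integrals since the integrands agree on the relevant range after the construction, or rather one applies the relaxation result directly in the form given by \cite[Example 4.1]{BFMTraces}).

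Next I would modify $(v_k^a,v_k^b)$ so that the junction condition $v_k^a(0)=v_k^b(0_\alpha)$ is met, i.e.\ so that the pair lies in (a mollified version of) the set $U$ from \eqref{Udense}; this is exactly the content of Proposition \ref{density1storder}, whose cut-off construction near $0_\alpha$ changes $v_k^b$ only on a ball $\overline B(0_\alpha,\eta_k)$ of vanishing measure and therefore affects $\int_\omega W_0(\nabla_\alpha\cdot)\,dx_\alpha$ by an amount controlled by the linear growth of $W_0$ times $(|D v_k^b|(\overline B(0_\alpha,\eta_k))+ \text{terms} \to 0)$; one must choose $\eta_k$ small enough (depending on $k$) so this error is $o(1)$. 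After a further mollification to get genuinely smooth fields (again an $o(1)$ error in the Sobolev integrals by lower semicontinuity/continuity), one obtains a double-indexed family in $U$. Then Proposition \ref{ub1stestimateU} applies to each such smooth junction-compatible pair, giving
\begin{align*}
\inf\Big\{\limsup_n F_n^q(u_n^a,u_n^b):\ &(u_n^a,u_n^b)\in\mathcal U_n^q,\ u_n^a\weakstar w^a,\ u_n^b\weakstar w^b\Big\}\\
&\leq\int_0^1\hat W(\nabla_3 w^a)\,dx_3+q\int_\omega W_0(\nabla_\alpha w^b)\,dx_\alpha
\end{align*}
for the smoothed pair $(w^a,w^b)$. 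Finally, a standard diagonalization — using the lower semicontinuity of $G^q$ proved in Lemma \ref{preFlsc}, or equivalently the metrizability of weak* convergence on bounded sets of $BV$ together with the sequential lower semicontinuity of the left-hand side of \eqref{ub} as a function of the target — lets me pass to the limit along the approximating pairs and conclude $G^q(u^a,u^b)\leq K^{a,q}(u^a)+qK^{b,q}(u^b)$, which is \eqref{ub}. The loads are neglected throughout as justified in Remark \ref{remloads}.

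The main obstacle I anticipate is the bookkeeping in the junction-gluing step: one needs the cut-off modification of Proposition \ref{density1storder} to be compatible simultaneously with weak* $BV$ convergence of the target and with control of the \emph{bulk} energies $\int_\omega W_0(\nabla_\alpha\cdot)$, $\int_0^1\hat W(\nabla_3\cdot)$ near the relaxation value, all while the sequences are only $W^{1,1}$ (not $W^{1,p}$, $p>1$), so the usual $p>N-1$ trick fails and one genuinely works in the $p<N-1$ regime of \cite{GGLM1}; keeping the energy perturbation produced by the transition annulus $\overline B(0_\alpha,\eta_k)\setminus\overline B(0_\alpha,\varepsilon_k)$ infinitesimal requires carefully linking the two radii to the (already fixed) approximating index. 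A secondary technical point is making sure that after all these manipulations the resulting sequences are still admissible for $\mathcal U_n^q$ (correct Dirichlet traces on $\omega\times\{1\}$ and on $\partial\omega\times]-1,0[$, and junction condition \eqref{eq:bc}), which follows because each modification is supported away from those boundary pieces, and that the final diagonal extraction respects the $\limsup$-type infimum in \eqref{ub}; this is where Lemma \ref{preFlsc} and the metrizability of weak* $BV$ on bounded sets do the work.
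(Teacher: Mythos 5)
Your argument is correct and follows essentially the same route as the paper: reduce to Sobolev pairs satisfying the trace constraints, invoke \cite[Example 4.1]{BFMTraces} to identify $K^{a,q}+qK^{b,q}$ as the $BV$ weak-$*$ relaxation of $\int_0^1\hat W(\nabla_3 u^a)\,dx_3+q\int_\omega W_0(\nabla_\alpha u^b)\,dx_\alpha$, and conclude by the lower semicontinuity of the infimum functional (Lemma \ref{preFlsc}) together with a diagonal argument. The junction-gluing bookkeeping you anticipate as the main obstacle is already encapsulated in Propositions \ref{density1storder} and \ref{ub1stestimateU} (whose proof ends with exactly that diagonalization, using strong $W^{1,1}$ density so the energy error control is automatic), so you need not redo it at the $BV$ level.
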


\begin{proof}[Proof]
In view of Propositions \ref{density1storder} and \ref{ub1stestimateU},  for every $u^a\in c^{a,q}+W^{1,1}_a(]0,1[;\mathbb R^3)$ 
and $u^b \in f^{b,q}+W^{1,1}_0(\omega;\mathbb R^3)$, inequality \eqref{ub1st} holds.
     
Recall that $\mathcal R^q$ in \eqref{def:Rell} coincides with $V$ in Definition \ref{def:spaceV} and let the functional $G^q: V\to \mathbb R$ 
defined in  \eqref{normal_inf}, be given by 
\begin{align*}
G^q(u^a, u^b):=\inf&\left\{\liminf_n F_n^q(u_n^a,u^b_n): (u_n^a, u_n^b)\in \mathcal U_n^q, \; u_n^a \overset{\ast}{\rightharpoonup} u^a \hbox{ in }BV(\Omega^a;\mathbb R^3),  
u_n^b  \overset{\ast}{\rightharpoonup} u^b \hbox{ in }BV(\Omega^b;\mathbb R^3)\right\}.
\end{align*} 

Its lower semicontinuity with respect to the weak * topology of $BV(]0,1[;\mathbb R^3) \times BV(\omega;\mathbb R^3)$ (see Lemma \ref{preFlsc}) and the fact that the functional in the right hand side of \eqref{ub1st},  i.e. the functional  $\int_0^1 \hat W(\nabla_3 u^a(x_3)) dx_3+ q\int_\omega W_0(\nabla_{\alpha}{u}^b(x_\alpha))dx_\alpha$   defined in $c^{a,q}+ W^{1,1}_a(]0,1[;\mathbb R^3)\times f^{b,q}+W^{1,1}_0(\omega;\mathbb R^3)$, admits, in view of \cite[Example 4.1]{BFMTraces} as relaxation in $V$, (with respect to the $BV$ weak * convergence in $BV(]0,1[;\mathbb R^3)\times BV(\omega;\mathbb R^3)$) 
the same functional, with $K^{a,q}$ and $K^{b,q}$ given by \eqref{def:Ka} and \eqref{def:Kb}, respectively,
proves our result.
\end{proof}

\begin{Remark}
    \label{general_dirichlet1}
    Propositions \eqref{lb}, \eqref{ub1stestimateU} and \eqref{2ndestimateub} also hold if in \eqref{dirichletq} we consider more general boundary conditions, i.e.  any $d^{a,q}\in\RI^{3\x2}$ and $g^{b,q}\in W^{1,1}(\omega;\RI^3)$.

   In fact, suppose that in the proof of Proposition \ref{lb}, in \eqref{lineremark} we have $u_n^a|_{\o\x\{1\}}=c^{a,q}+r_nd^{a,q}x_\alpha$ for $d^{a,q}\in\RI^{3\x2}$. Then for $\tilde u_n^a := u_n^a - r_nd^{a,q}x_\alpha$, since $d^{a,q}$ is constant, we have that $\nabla_3\tilde u_n^a =\nabla_3 u_n^a $ and $\tilde u_n^a|_{\omega\x\{1\}}=c^{a,q}$ therefore 
    $$
     \liminf_{n\to \infty}\int_{{\Omega}^a}
 \hat W^{**}\left(
\displaystyle{\nabla_{3}u^{a}_n}(x)\right)dx = \liminf_{n\to \infty}\int_{{\Omega}^a}
 \hat W^{**}\left(
\displaystyle{\nabla_{3}\tilde u^{a}_n}(x)\right)dx.
    $$

    \vspace{4mm}
    For $u_n^b \in W^{1,1}(\Omega^b;\RI^3)$ with $u_n^b|_{\partial\omega\x]-1,0[}=f^{b,q}+h_ng^{b,q}x_3$ consider $\tilde u_n^b := u_n^b - h_ng^{b,q}x_3$.
    \begin{align*}
        \liminf_{n\to \infty}\int_{{\Omega}^b}
        QW_0\left(\displaystyle{\nabla_{\alpha}\tilde u^b_n(x)}
        \right)dx = \liminf_{n\to \infty}\int_{{\Omega}^b}
        QW_0\left(\displaystyle{\nabla_{\alpha} u^b_n(x) - h_nx_3\nabla_\alpha g^{b,q}(x_\alpha)}
        \right)dx.
    \end{align*}
    Since $QW_0$ is quasiconvex and has linear growth it is Lipschitz (see \cite{DA}), hence
   
    \begin{align*}
        \liminf_{n\to \infty}\int_{{\Omega}^b}
        QW_0\left(\displaystyle{\nabla_{\alpha}\tilde u^b_n(x)}
        \right)dx = \liminf_{n\to \infty}\int_{{\Omega}^b}
        QW_0\left(\displaystyle{\nabla_{\alpha} u^b_n(x)}
        \right)dx.
    \end{align*}
   Therefore, we can reproduce the rest of the proof in Proposition \ref{lb} with $\tilde u_n^a$ and $\tilde u_n^b$.

    Finally, in Proposition \ref{ub1stestimateU} by replacing $z^a$ and $z^b$ respectively with $z^a+d^{a,q}$ and $z^b+g^{b,q}$ we still reach the same conclusion.
    
\end{Remark}

\subsection{Infinite rate: $\ell=+\infty$.}\label{sec:rateinfty}

We now consider the case \eqref{eq:Fninfty}. For the reasons exposed in Remark \ref{remloads}, we can neglect the term $\int_{\Omega^a} H_n^{a,\infty}(x) \cdot u^a_n(x)dx + \frac{h_n}{r_n^2}\int_{\Omega^b} H_n^{b,\infty}(x)\cdot u^b_n(x)dx$ since \eqref{loadinfty} provides continuity of these latter integrals with respect to $\Gamma$-convergence.

\begin{Proposition}\label{Compactness_infty}
    Consider the Borel function  \(W : \mathbb{R}^{3\x 3}\to 
    \mathbb{R}\) satisfying \eqref{coerci} and \eqref{eq:growth1}.  
    For  $n \in \mathbb N$ let \(r_n\), \(h_n\) be as in \eqref{hrzero}, with $\ell$ in \eqref{ell} coinciding with $+\infty.$ Let the spaces
    \(\mathcal{U}_n^\infty\) be as in \eqref{Vn}
    with Dirichlet conditions as in \eqref{dirichletinfty}, and the loads $H_n^a,H_n^b$ satisfy \eqref{loadinfty}.
    Let $F_n^\infty$  be the energy functional defined in \eqref{eq:Fninfty}.

 Then, for every  
        $( u_n^a, u_n^b) \in \mathcal{U}^\infty_n$  such that \(\sup_{n\in\N} |F_n^\infty(  u_n^a,  u_n^b)|
<+\infty\), there exist an increasing sequence of positive integer
        numbers $(n_i)_{i \in \mathbb N}$,
        ${u}^a
        \in BV(]0,1[;\RI^3)$
        depending possibly on
        the selected subsequence $(n_i)_{i \in \mathbb N}$, such that
       
        \begin{equation}\label{convinftycomp}\left\{
                \begin{array}{l}
                          u^a_{n_i}\overset{\ast}{\rightharpoonup}  {u}^a  \hbox{ in
                        }BV(\Omega^a;\RR^3),\\\\  u^b_{n_i} \to (x_\alpha,0) \hbox{ in
                        }W^{1,1}(\Omega^b;\RR^3).\end{array}\right.\end{equation}
        
         %
\noindent Furthermore, it results
         \begin{equation*}
         \displaystyle{\frac{1}{h_{n_i}}}\nabla_{3}  u^b_{n_i}
                        \to  (0_\alpha,1) \hbox{  in
                        }L^1(\Omega^b;\mathbb R^3).
        \end{equation*}       
\end{Proposition}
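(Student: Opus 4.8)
The plan is to derive the compactness from the energy bound via the growth condition \eqref{eq:growth1}, which is the crucial structural assumption in this regime. First I would note that since $\sup_n |F_n^\infty(u_n^a, u_n^b)| < +\infty$ and the loads can be neglected (Remark \ref{remloads}), the definition \eqref{eq:Fninfty} of $F_n^\infty$ together with \eqref{coerci} yields the same type of bound as \eqref{boundedenergy} in Proposition \ref{Compactnessres1}, namely a uniform bound on $\|\tfrac{1}{r_n}\nabla_\alpha u_n^a\|_{L^1(\Omega^a)} + \|\nabla_3 u_n^a\|_{L^1(\Omega^a)}$ and on $\tfrac{h_n}{r_n^2}\big(\|\nabla_\alpha u_n^b\|_{L^1(\Omega^b)} + \tfrac{1}{h_n}\|\nabla_3 u_n^b\|_{L^1(\Omega^b)}\big)$. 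For the $u_n^a$ part the argument is verbatim that of Proposition \ref{Compactnessres1}: Poincar\'e's inequality with the Dirichlet condition $c^{a,\infty}$ on $\omega\times\{1\}$ gives a uniform $W^{1,1}(\Omega^a;\RI^3)$ bound with $\|\nabla_\alpha u_n^a\|_{L^1}\le C r_n\to 0$, so up to a subsequence $u_n^a\weakstar u^a$ in $BV(\Omega^a;\RI^3)$ with $D_\alpha u^a = 0$, hence $u^a$ is identified with an element of $BV(]0,1[;\RI^3)$.

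The new ingredient is the $u_n^b$ part, where one must show the much stronger conclusion that $u_n^b \to (x_\alpha,0)$ strongly in $W^{1,1}(\Omega^b;\RI^3)$ and $\tfrac{1}{h_n}\nabla_3 u_n^b \to (0_\alpha,1)$ in $L^1(\Omega^b;\RI^3)$. Here I would use \eqref{eq:growth1}: since $W(M)\ge \tfrac1C|M-Id|$, the term $\tfrac{h_n}{r_n^2}K_n^{b,\infty}(u_n^b)$ controls
\begin{equation*}
\frac{h_n}{r_n^2}\cdot\frac1C\int_{\Omega^b}\left|\left(\nabla_\alpha u_n^b \;\Big|\; \tfrac{1}{h_n}\nabla_3 u_n^b\right) - Id\right|\,dx \le C',
\end{equation*}
so that $\int_{\Omega^b}|\nabla_\alpha u_n^b - Id_\alpha|\,dx + \int_{\Omega^b}|\tfrac{1}{h_n}\nabla_3 u_n^b - (0_\alpha,1)|\,dx \le C' \tfrac{r_n^2}{h_n} \to 0$ because $\tfrac{h_n}{r_n^2}\to\ell=+\infty$. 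This immediately gives $\nabla_\alpha u_n^b \to Id_\alpha$ and $\tfrac1{h_n}\nabla_3 u_n^b \to (0_\alpha,1)$ in $L^1(\Omega^b;\RI^3)$, which is the last assertion, and also $\nabla_3 u_n^b \to 0$ in $L^1$. Combined with the Dirichlet condition $f^{b,\infty}(x_\alpha)=(x_\alpha,0)$ on $\partial\omega\times]-1,0[$ and Poincar\'e's inequality applied to $u_n^b - (x_\alpha,0)$ (which vanishes on the lateral boundary), we get $u_n^b \to (x_\alpha,0)$ strongly in $W^{1,1}(\Omega^b;\RI^3)$; no subsequence is needed for this part since the limit is uniquely determined.

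The step I expect to be the main obstacle — really the only non-routine point — is making sure the Poincar\'e argument for $u_n^b$ is applied correctly: one wants to conclude strong $L^1$ convergence of $u_n^b$ itself, not merely of its gradient, and the Dirichlet datum $f^{b,\infty}$ here is $x_\alpha$-dependent, so one should subtract the fixed field $(x_\alpha,0)$ (whose full gradient is exactly $Id_\alpha$ in the $\alpha$-block and $0$ in the $x_3$-block) and apply Poincar\'e to the difference, which lies in $W^{1,1}_b(\Omega^b;\RI^3)$. Once the gradient of the difference is shown to go to zero in $L^1$ — which is precisely what \eqref{eq:growth1} gave us — the Poincar\'e–Wirtinger-type inequality on the Lipschitz domain $\Omega^b$ closes the argument. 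I would also remark, as the authors do, that by \eqref{coerci} and these boundary conditions the weak* $BV$ convergence of $u_n^b$ automatically upgrades to the stated strong convergence, so that the strong convergence hypotheses appearing in \eqref{probl_infty,0} and \eqref{probl_infty,infty} are in fact redundant.
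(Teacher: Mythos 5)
Your proposal is correct and follows essentially the same route as the paper: the $u_n^a$ part is verbatim the argument of Proposition \ref{Compactnessres1}, and the $u_n^b$ part uses \eqref{eq:growth1} together with $h_n/r_n^2\to+\infty$ to get $\nabla_\alpha u_n^b\to Id_\alpha$ and $\tfrac1{h_n}\nabla_3 u_n^b\to(0_\alpha,1)$ in $L^1$, after which the lateral Dirichlet datum pins down the limit (the paper identifies the limit up to a constant $y_0$ and uses trace continuity to get $y_0={\bf 0}$, which is equivalent to your Poincar\'e argument). The only imprecision is that $u_n^b-(x_\alpha,0)$ does \emph{not} vanish on $\partial\omega\times]-1,0[$ (its trace there is $(0_\alpha,h_nx_3)$); you should subtract the actual datum $(x_\alpha,h_nx_3)$, whose full gradient is $(Id_\alpha\,|\,(0_\alpha,h_n))$, which changes nothing since it differs from $(x_\alpha,0)$ by $O(h_n)$ uniformly.
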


\begin{proof}

    Arguing as in Proposition \ref{Compactnessres1}, we first observe that we neglect to loads, then we can say that there exist a not relabelled subsequence $(u^a_{n}, u^b_{n})\in \mathcal U_n^\infty$,
    and $(u^a,u^b)\in V$ (where V is defined in \eqref{eq:spazioZ}) such that \eqref{weakdefinitive} holds. 
    From \eqref{eq:growth1} and
     the assumption $\sup_{n\in\N} |F_n^\infty(  u_n^a,  u_n^b)|
<+\infty$, there exists a $C>0$, such that,  for all $n\in\N$
    $$
  \frac{h_n}{r_n^2} \int_{{\Omega}^b}
\left|\;\left(\displaystyle{\nabla_{\alpha}u^b_n(x)} \;\; | \; \;
\displaystyle{\frac{1}{h_n}\nabla_{3}u^b_n(x)}
\right) \; \; - \; Id \right| dx \leq  \frac{h_n}{r_n^2}K_n^{b,\infty}(u_n^b) \leq C
    $$
and so,

\begin{align}
    \label{growth_alpha_infty}
    \int_{{\Omega}^b}
\left|\nabla_{\alpha}u^b_n(x) 
 \;  - \;Id_\alpha \right| dx \leq \frac{r_n^2}{h_n}C
\\
\label{growth_3_infty}
\int_{{\Omega}^b}
\left|\frac{1}{h_n}\nabla_{3}u^b_n(x) 
 \;  - \;(0_\alpha,1)\right| dx \leq \frac{r_n^2}{h_n}C,
\end{align}
with $Id_\alpha$ 
as in \eqref{matrixdef},
which from \eqref{eq:Fninfty} implies

\begin{align}
    \nabla u_n^b \to \left(Id_\alpha \;\; | \; \; {\bf 0} \right) \; \hbox{in} \; L^1(\Omega^b;\RI^{3\times 3}) \label{58}
    \\
     \displaystyle{\frac{1}{h_{n}}}\nabla_{3}  u^b_{n}
                        \to  (0_\alpha, 1) \hbox{  in
                        } L^1(\Omega^b;\mathbb R^3),\nonumber
\end{align}
where ${\bf 0}$ in the \eqref{58} stands for the null column in $\mathbb R^{3\times 1}.$

From \eqref{weakdefinitive} and \eqref{58} we get $ u^b_{n_i} \to (x_\alpha,0) + y_0\; \hbox{ in} \; W^{1,1}(\Omega^b;\RI^3)$ for some $y_0\in \RI^3$. Since $u_n^b|_{\partial\omega\x]-1,0[}=(x_\alpha,h_nx_3)$, from the strong $W^{1,1}$ convergence we conclude that the traces also converge strongly in $L^1$ to $(x_\alpha,0) + y_0$ and therefore $y_0={\bf 0}$.

\end{proof}

In conclusion, in view of Proposition \ref{Compactness_infty},
the functional $G^\infty$ in \eqref{normal_inf} becomes the left hand side of \eqref{probl_infty,infty}, i.e.

\begin{align}\label{probl_infty}
G^\infty(u^a,(x_\alpha, 0)):=\inf\left\{\liminf_{n\to\infty}F_n^\infty(u_n^a, u_n^b): (u_n^a, u_n^b)\in \mathcal U_n^\infty,  \right. \nonumber\\
\left.u_n^a \overset{\ast}{\rightharpoonup} u^a \hbox{ in }BV(\Omega^a;\mathbb R^3), 
u_n^b  \to (x_\alpha,0) \hbox{ in }W^{1,1}(\Omega^b;\mathbb R^3)\right\},
\end{align}

where $\mathcal U_n^\infty$ is given by \eqref{Vn} with the specific boundary datum as in \eqref{dirichletinfty} and $u^a \in BV(]0,1[;\mathbb R^3)$.

The functional $G^\infty$ is defined in $\mathcal R^\infty$, namely 
in $BV(]0,1[;\mathbb R^3) \times \{(x_\alpha,0)\}$.

\noindent We will study problem \eqref{probl_infty} in two separate subcases: first $\frac{h_n}{r_n} \to 0$, then $\frac{h_n}{r_n}\to \infty$. 
In what follows, this subcases will be denoted respectively by $(\infty,0)$ and $(\infty, \infty)$.

\subsubsection{Case $\displaystyle\lim_{n\to \infty}\frac{h_n}{r_n}= 0$}

As in the previous cases, the integral representation will be achieved in two steps.

\begin{Proposition}[Lower bound ($\infty, 0$)]
    \label{lbinfty0}
  Consider the Borel function  \(W : \mathbb{R}^{3\x 3}\to 
    \mathbb{R}\) satisfying  \eqref{coerci} and \eqref{eq:growth1}.   Assume also that the recession function of $\hat W$ \eqref{hatW} defined as in \eqref{S101rec}, satisfies \eqref{Whatrec}. 
    For  $n \in \mathbb N$ let \(r_n\), \(h_n\) be as in \eqref{hrzero}, with $\ell$ in \eqref{ell} coinciding with $+\infty,$ with $\lim_{n\to \infty}\frac{h_n}{r_n}=0$. Let the spaces
    \(\mathcal{U}_n^\infty\) be as in \eqref{Vn}
    with Dirichlet conditions as in \eqref{dirichletinfty}, and the loads $H_n^{a,\infty},H_n^{b,\infty}$ satisfy \eqref{loadinfty}.
    Let $F_n^\infty$  be the energy functional defined in \eqref{eq:Fninfty},
    and let $G^\infty(u^a, (x_\alpha, 0))$ be as in \eqref{probl_infty,0}.
  
    Then for every  $u^a \in BV(]0,1[;\mathbb R^3)$ 
    \begin{align*}
    G^\infty(u^a, (x_\alpha, 0)) \geq K^{a,\infty}(u^a).
\end{align*}
\end{Proposition}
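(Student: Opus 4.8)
The plan is to prove the lower bound $G^\infty(u^a,(x_\alpha,0)) \geq K^{a,\infty}(u^a)$ by the usual two-step argument: first establish the bulk+Cantor+jump part of $K^{a,\infty}$ coming from the energy on $\Omega^a$, and then recover the boundary term $(\hat W^{\ast\ast})^\infty(c^{a,\infty} - u^a(1^-))$ arising from the Dirichlet datum at $\omega\times\{1\}$. Fix $u^a \in BV(]0,1[;\RI^3)$ and take any admissible sequence $(u_n^a,u_n^b)\in\mathcal U_n^\infty$ with $u_n^a \weakstar u^a$ in $BV(\Omega^a;\RI^3)$ and $u_n^b \to (x_\alpha,0)$ in $W^{1,1}(\Omega^b;\RI^3)$; we may assume $\liminf_n F_n^\infty(u_n^a,u_n^b)<+\infty$ and, passing to a subsequence realizing the liminf, that $F_n^\infty(u_n^a,u_n^b)$ is bounded. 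Since the term $\frac{h_n}{r_n^2}K_n^{b,\infty}(u_n^b)$ is nonnegative (by \eqref{eq:growth1}, using that $W\geq 0$ via $W(Id)=0$ and the coercivity-type lower bound), we can drop it and are left with bounding $\liminf_n K_n^{a,\infty}(u_n^a)$ from below.

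For the $\Omega^a$-part I would follow exactly the chain of inequalities in the proof of Proposition~\ref{lb}: minimizing out the $\alpha$-columns of the gradient replaces $W$ by $\hat W$, then $\hat W \geq \hat W^{\ast\ast}$, so
\[
\liminf_n K_n^{a,\infty}(u_n^a) \;\geq\; \liminf_n \int_{\Omega^a} \hat W^{\ast\ast}\big(\nabla_3 u_n^a(x)\big)\,dx.
\]
Now $\hat W^{\ast\ast}$ is convex with linear growth, and (by \cite[Proposition 2.6]{BFMTraces} together with \eqref{Whatrec}) it has a strong recession function $(\hat W^{\ast\ast})^\infty = ((\hat W)^\infty)^{\ast\ast}$. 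Since $u_n^a$ has the Dirichlet trace $c^{a,\infty}$ on $\omega\times\{1\}$ (recall $d^{a,\infty}=(\mathbf 0,\mathbf 0)$ from \eqref{dirichletinfty}), I apply \cite[Example 4.1]{BFMTraces} to the functional $u\mapsto \int_{\Omega^a}\hat W^{\ast\ast}(\nabla_3 u)\,dx + \varepsilon\|\nabla_\alpha u\|_{L^1}$ — exactly as in Proposition~\ref{lb} — to obtain
\[
\liminf_n \int_{\Omega^a} \hat W^{\ast\ast}\big(\nabla_3 u_n^a\big)\,dx \;\geq\; \int_{]0,1[}\hat W^{\ast\ast}(\nabla_3 u^a)\,dx_3 + \int_{]0,1[}(\hat W^{\ast\ast})^\infty\Big(\tfrac{dD_3^s u^a}{d|D_3^s u^a|}\Big)\,d|D_3^s u^a| + (\hat W^{\ast\ast})^\infty\big(c^{a,\infty}-u^a(1^-)\big),
\]
using the identification of $BV(\Omega^a;\RI^3)$-fields constant in $x_\alpha$ with fields in $BV(]0,1[;\RI^3)$. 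The right-hand side is precisely $K^{a,\infty}(u^a)$ (with the loads neglected, per Remark~\ref{remloads}). Taking the infimum over admissible sequences gives the claim.

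The main obstacle — and the place where the hypothesis $h_n/r_n\to 0$ might seem relevant — is verifying that the $\Omega^b$ contribution really can be discarded without loss, i.e. that one does not need to track an extra penalization at the junction point $0$ in this subcase (in contrast to the $(\infty,\infty)$ case, cf. \eqref{probl_infty,infty}). Here it is enough that $\frac{h_n}{r_n^2}K_n^{b,\infty}(u_n^b)\geq 0$, so no negative term is thrown away and the inequality is safe; the subtlety $h_n/r_n\to 0$ versus $h_n/r_n\to\infty$ only matters for the matching \emph{upper} bound, where one must construct recovery sequences whose $b$-part stretches near $r_n\omega\times\{0\}$ at zero energy cost. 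Thus for the lower bound the argument is a direct transcription of Proposition~\ref{lb}, the only genuine check being that the admissibility constraint $u_n^b\to(x_\alpha,0)$ strongly in $W^{1,1}$ is compatible with — indeed forced by, via Proposition~\ref{Compactness_infty} — the energy bound, so that no information about $u^a(0^+)$ is lost or gained.
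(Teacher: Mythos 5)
Your proposal is correct and follows essentially the same route as the paper: the authors also drop the nonnegative $\Omega^b$ term (nonnegativity coming from \eqref{eq:growth1}), reduce to the infimum over sequences $u_n^a\in c^{a,\infty}+W^{1,1}_a(\Omega^a;\RR^3)$ weakly* converging to $u^a$, and then invoke verbatim the chain of inequalities $W\geq\hat W\geq\hat W^{\ast\ast}$ together with \cite[Example 4.1]{BFMTraces} from Proposition \ref{lb}. Your closing remark that $h_n/r_n\to 0$ plays no role in the lower bound (only in constructing the recovery sequence for the upper bound) is also consistent with the paper's treatment.
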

\begin{proof}
    [Proof.]
    Recall that we consider null loads. Let $K^{a,\infty}_n$ be the functional in \eqref{kna}.
The energy \begin{align}\label{probl_string}
\inf\left\{\lim_{n\to \infty}\left(K^{a,\infty}_n(  u^a_n)\right): u_n^a \in \left(c^{a,\infty} +W_a^{1,1}
(\Omega^a;\RR^3)\right),  
u_n^a \overset{\ast}{\rightharpoonup} u^a \hbox{ in }BV(\Omega^a;\mathbb R^3) \right\}
\end{align}
 is clearly a lower bound to \eqref{probl_infty}. Using the same arguments as in Proposition \ref{lb} we can get $K^{a,\infty}(u^a)$ as a lower bound to \eqref{probl_string} and consequently to \eqref{probl_infty}, which concludes the proof.
\end{proof}

In order to prove the upper bound we assume first $u^a\in (c^{a,\infty} + W_a^{1,1}(\;]0,1[;\RI^3))$, where $c^{a,\infty}$ is as in \eqref{dirichletinfty}, and we  get the following result.

\begin{Lemma}\label{ub_sobolev_infty}
Under the same assumptions of Proposition \ref{lbinfty0}, it results that
\begin{align*}
\inf\left\{\liminf_nF_n^\infty(u_n^a, u^b_n): (u_n^a, u_n^b)\in \mathcal U_n^\infty,  
u_n^a \overset{\ast}{\rightharpoonup} u^a \hbox{ in }BV(\Omega^a;\mathbb R^3), 
\right.
\nonumber\\
\left.
u_n^b  \to (x_\alpha,0) \hbox{ in }W^{1,1}(\Omega^b;\mathbb R^3)\right\} \; \leq \;
\int_0^1 \hat{W}(\nabla_3{u}^a(x_3))dx_3,
\end{align*}
for every $u^a \in (c^{a,\infty} + W_a^{1,1}(\;]0,1[;\RI^3))$. 
\end{Lemma}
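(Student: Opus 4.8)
The plan is to construct an explicit recovery sequence $(u_n^a, u_n^b) \in \mathcal U_n^\infty$ achieving (in the limsup sense) the upper bound $\int_0^1 \hat W(\nabla_3 u^a(x_3))\,dx_3$ for the $\Omega^a$ part, while forcing the $\Omega^b$ component to converge strongly to the rigid field $(x_\alpha, 0)$ with asymptotically negligible energy. The key structural fact, coming from the assumption $\lim_n h_n/r_n = 0$, is that the vanishing interface $r_n\omega \times \{0\}$ is so small relative to the thickness $h_n$ that the ``below'' cylinder can accommodate a boundary layer matching the trace of $u_n^a$ at $x_3 = 0$ to its rigid limit without paying any asymptotic energy cost. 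Concretely, I would first reduce, by a diagonalization/density argument as in Proposition \ref{ub1stestimateU}, to the case where $u^a \in c^{a,\infty} + C^\infty_a([0,1];\RI^3)$ is smooth; the general Sobolev case then follows by the strong $W^{1,1}$ continuity of $u^a \mapsto \int_0^1 \hat W(\nabla_3 u^a)\,dx_3$ (recall $\hat W$ has linear growth, hence $\hat W$, or at least $\hat W^{**}$, is Lipschitz) together with a standard lower-semicontinuity-plus-relaxation argument via Lemma \ref{preFlsc}.

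For the smooth case, following the strategy of \cite[Section 5]{GZNODEA} and \cite[Proposition 4.1]{GGLM1}, I would pick $z^a \in W^{1,1}_0(]0,1[;\RI^3)$ (the ``optimal'' transverse gradient, chosen via Aumann's selection lemma so that $W(z^a(x_3) \mid \nabla_3 u^a(x_3))$ is close to $\hat W(\nabla_3 u^a(x_3))$ a.e.) and set, for a suitable sequence $\varepsilon_n \to 0^+$,
\begin{align*}
u_n^a(x_\alpha, x_3) := \begin{cases}
\big(z^a(\varepsilon_n) r_n x_\alpha + u^a(\varepsilon_n)\big)\frac{x_3}{\varepsilon_n} + \Phi_n(r_n x_\alpha)\frac{\varepsilon_n - x_3}{\varepsilon_n} & \text{if } x_3 \in ]0,\varepsilon_n[, \\
z^a(x_3) r_n x_\alpha + u^a(x_3) & \text{if } x_3 \in [\varepsilon_n, 1[,
\end{cases}
\end{align*}
where $\Phi_n$ is an interpolation profile on $r_n\omega$ connecting the rigid field $(\cdot, 0)$ to the value $u^a(\varepsilon_n)$; on the ``below'' part one sets $u_n^b(x_\alpha, x_3) := (x_\alpha, h_n x_3) + w_n(x_\alpha, x_3)$, where $w_n$ is a boundary-layer correction supported near $r_n\omega \times \{0\}$ that realizes the junction condition \eqref{eq:bc}, i.e. $u_n^b(r_n x_\alpha, 0) = u_n^a(x_\alpha, 0)$. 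The core computation is to check that $\nabla u_n^b$ differs from $Id$ only on a set whose $\mathcal L^3$-measure is $O(r_n^2)$ (the trace matching happens on a disk of radius $\sim r_n$), with gradient of size controlled so that $\frac{h_n}{r_n^2}K_n^{b,\infty}(u_n^b) \to 0$; this is exactly where $h_n/r_n \to 0$ is used — the penalty is of order $\frac{h_n}{r_n^2}\cdot \frac{r_n^2}{h_n} \cdot (\text{something} \to 0)$, or more precisely one can absorb the layer thickness and get a factor $h_n/r_n$ or $r_n$ tending to zero. Meanwhile, exactly as in \eqref{convubaquote}, one has $\frac{1}{r_n}\nabla_\alpha u_n^a \to z^a$ and $\nabla_3 u_n^a \to \nabla_3 u^a$ strongly in $L^1(\Omega^a)$, so $K_n^{a,\infty}(u_n^a) \to \int_{\Omega^a} W(z^a(x_3) \mid \nabla_3 u^a(x_3))\,dx \le \int_0^1 \hat W(\nabla_3 u^a(x_3))\,dx_3 + \delta$ for the chosen selection, and letting $\delta \to 0$ via a further diagonalization finishes the estimate; note $u_n^a \weakstar u^a$ in $BV(\Omega^a;\RI^3)$ follows from the $L^1$ convergence and the uniform bound on $\|\nabla u_n^a\|_{L^1}$.

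The main obstacle I anticipate is the construction and energy estimate of the boundary layer $w_n$ on $\Omega^b$ near the shrinking junction disk $r_n\omega \times \{0\}$: one must simultaneously (i) enforce the junction condition \eqref{eq:bc} with the prescribed (non-rigid) trace of $u_n^a$, (ii) keep the Dirichlet datum $(x_\alpha, h_n x_3)$ on $\partial\omega \times ]-1,0[$ untouched, and (iii) ensure the localized gradient blow-up is integrable against the scaling $\frac{h_n}{r_n^2}$ and vanishes in the limit. The trace values of $u_n^a(\cdot, 0)$ are of order $O(1)$ (they include $u^a(0) = c^{a,\infty}$ and the term $z^a(\varepsilon_n) r_n x_\alpha \to 0$), so the layer must bridge an $O(1)$ jump over a region of diameter $\sim r_n$ within thickness $h_n$; a cone/cylinder construction gives $|\nabla w_n| \sim 1/\min(r_n, h_n) = 1/h_n$ on a set of measure $\sim r_n^2 h_n$, contributing $\frac{h_n}{r_n^2}\cdot \frac{1}{h_n}\cdot r_n^2 h_n = h_n \to 0$ — so the condition $h_n/r_n \to 0$ (in fact here only $h_n \to 0$) is what makes it work, and one should double-check the interaction with the growth condition \eqref{eq:growth1} to confirm $W$ evaluated on these large gradients is controlled by $\frac{1}{C}|M - Id| + $ bounded terms, consistent with the upper bound $C(1 + |M|)$ from \eqref{coerci}. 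Modulo these layer estimates, which parallel those in \cite{GGLM1, GZNODEA, LeDR1}, the argument is routine.
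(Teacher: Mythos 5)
Your overall strategy (take $u_n^a=z^a(x_3)\,r_nx_\alpha+u^a(x_3)$, match $u_n^b$ to the trace of $u_n^a$ on $r_n\omega\times\{0\}$ by a construction localized near the junction, show the rescaled energy of the matching layer vanishes, then optimize over $z^a$ to produce $\hat W$) is exactly the paper's, and the treatment of the $\Omega^a$ part and the final passage to $\hat W$ via density of $W^{1,1}_0$ in $L^1$ and dominated convergence are fine. The paper's concrete choice for the layer is $u_n^b=(x_\alpha,h_nx_3)(1-\phi_n)+u_n^a(x_\alpha/r_n,-h_nx_3)\phi_n$ with a horizontal cut-off $\phi_n$ equal to $1$ on $r_n\omega$, vanishing outside $(1+\beta)r_n\omega$, and $|\nabla\phi_n|\le C/(\beta r_n)$; no extra layer near $x_3=0$ in $\Omega^a$ is needed (your $\Phi_n$ interpolation is harmless but superfluous here).

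The genuine gap is in the step you yourself flag as the main obstacle: the energy count for the matching layer is wrong, and it leads you to the wrong conclusion about which hypothesis is used. You claim the cost is $\tfrac{h_n}{r_n^2}\cdot\tfrac{1}{h_n}\cdot r_n^2h_n=h_n$, so that ``only $h_n\to0$'' is needed. But the prefactor $\tfrac{h_n}{r_n^2}$ multiplies the \emph{rescaled} functional $K_n^{b,\infty}$ on $\Omega^b=\omega\times\,]-1,0[$, whose vertical extent is $1$, not $h_n$; you cannot simultaneously use the physical support measure $r_n^2h_n$. The correct count (the paper's term \eqref{int1}) is: the cut-off gradient is of size $1/(\beta r_n)$, it multiplies an $O(1)$ difference $|u_n^a(x_\alpha/r_n,-h_nx_3)-(x_\alpha,h_nx_3)|$, and it is supported on $B_n=(1+\beta)r_n\omega\times\,]-1,0[$ of measure $O(r_n^2)$, giving $\tfrac{h_n}{r_n^2}\cdot\tfrac{1}{r_n}\cdot r_n^2\cdot O(1)=O(h_n/r_n)$. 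Hence the hypothesis $h_n/r_n\to0$ is essential and cannot be weakened to $h_n\to0$: if it could, the same upper bound would hold in the regime $h_n/r_n\to\infty$, contradicting the strictly larger limit $K^{a,\infty}(u^a)+(\hat W^{**})^\infty(u^a(0^+))$ established in \eqref{probl_infty,infty}. You also need the second layer term, $\tfrac{h_n}{r_n^2}\int_{B_n}|(\tfrac{1}{r_n}\nabla_\alpha u_n^a\,|\,{-\nabla_3 u_n^a})(x_\alpha/r_n,-h_nx_3)|\,dx$, which after a change of variables equals $C\int_{\omega\times]0,h_n[}|(\tfrac{1}{r_n}\nabla_\alpha u_n^a\,|\,\nabla_3u_n^a)|\,dx$ and vanishes by uniform integrability — this does not appear in your sketch. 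Two minor points: $z^a$ should take values in $\mathbb R^{3\times2}$, not $\mathbb R^3$; and continuity of $W$ (needed for $K_n^{a,\infty}(u_n^a)\to\int W(z^a|\nabla_3u^a)$) is legitimate only after invoking Lemma \ref{lemma29} to reduce to $W$ quasiconvex — $\hat W$ itself need not be Lipschitz for a general Borel $W$.
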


\begin{proof}
    For $z^a\in W^{1,1}_0(]0,1[;\RI^{3\x2})$ consider the following sequences:

\begin{align}
\label{sequence_un_above}
    &u_n^a(x_\alpha,x_3) = z^a(x_3) r_nx_\alpha + u^a(x_3) & (x_\alpha,x_3)\in \Omega^a,\\
    &u_n^b(x_\alpha,x_3) = (x_\alpha,h_nx_3)(1-\phi_n(x_\alpha)) \; + \; u_n^a\left(\frac{1}{r_n}x_\alpha, -h_nx_3\right)\phi_n(x_\alpha) \quad &(x_\alpha,x_3)\in\Omega^b, \label{seq_un_below}
\end{align}
where $\phi_n$ is a smooth cut-off function defined in $\RI^2$ satisfying:

\begin{align}
&\phi_n(x_\alpha):= \left\{\begin{array}{ll}
        1 &\hbox{ if } x_\alpha\in r_n\omega,\\
        0 &\hbox{ if } x_\alpha \notin (1+\beta)r_n\omega, 
        \end{array}\right. \nonumber \\ \label{cutoff}
&|\nabla\phi_n(x_\alpha)| \leq \frac{C}{\beta r_n} \quad \hbox{and} \quad 0 \leq \phi_n(x_\alpha) \leq 1.
\end{align}
Here $\beta > 0$ is a fixed arbitrary value.

Notice that $(u_n^a,u_n^b)\in \mathcal{U}_n^\infty$ because for $x_\alpha\in r_n\omega$ we get $u_n^b(x_\alpha,0) = u_n^a(x_\alpha /r_n, 0)$.
\begin{align*}
    \left(\frac{1}{r_n}\nabla_\alpha u_n^a \; | \; \nabla_3 u_n^a \right) =& \; ( z^a  
    \; | \;  \nabla_3 u^a + \nabla_3z^a 
    r_nx_\alpha  ) 
    \\ 
    \\ 
    \left(\nabla_\alpha u_n^b \; | \; \frac{1}{h_n} \nabla_3 u_n^b\right)(x_\alpha,x_3) =&  Id(1-\phi_n(x_\alpha)) 
    + \left( \frac{1}{r_n} \nabla_\alpha u_n^a \; | \; -\nabla_3 u_n^a \right) \left( \frac{x_\alpha}{r_n},-h_nx_3\right)\cdot\phi_n(x_\alpha) 
    \\
    &+\left(   \nabla_\alpha\phi_n \otimes \left[ u_n^a\left(\frac{x_\alpha}{r_n},-h_n x_3\right)-(x_\alpha,h_nx_3) \right] \; | \;  0\right).
\end{align*}

Arguing as in Proposition \ref{ub1stestimateU} it results that
\begin{align}
u^a_n \to u^a &\hbox{ strongly in }L^1(\Omega^a;\mathbb R^3) \nonumber
\\
\label{conv_alpha}
\frac{1}{r_n}\nabla_\alpha u^a_n \to z^a &\hbox{ strongly in } L^1(\Omega^a; \mathbb R^{3\x2}),
\\
\label{conv_3}
\nabla_3 u^a_n \to \nabla_3 u^a &\hbox{ strongly in }L^{1}(\Omega^a;\mathbb R^3),
\end{align}
which implies, since $ W$ is continuous,

$$
K^{a,\infty}_n(u_n^a) = \int_{\Omega^a} W \left(\frac{1}{r_n}\nabla_\alpha u_n^a(x) \; | \; \nabla_3 u_n^a(x) \right) dx \to \int_{\Omega^a} W \left(z^a(x_3) \; | \; \nabla_3 u^a(x_3) \right) dx.
$$

For the sequence $(u_n^b)_n$ in \eqref{seq_un_below}, it is easily seen that $u_n^b|_{\partial\omega\x]-1,0[}= (x_\alpha, h_nx_3)$. To prove that $(u_n^b)_n$ is admissible, we observe that, by the Proposition \ref{Compactness_infty}, if $\sup_{n\in \N} \frac{h_n}{r_n^2} K_n^{b,\infty}(u_n^b) < +\infty$, as it is shown below, then $u_n^b\to(x_\alpha,0)$ in $W^{1,1}(\Omega^b;\RI^3)$ .

From \eqref{coerci}, \eqref{eq:growth1} and \eqref{cutoff} we have the following estimation for the energy, where 
$$B_n:=(1+\beta)r_n\omega\x]-1,0[, \quad A_n:=r_n\omega\x]-1,0[,\quad  \psi_n^b(x_\alpha,x_3) := (x_\alpha,h_nx_3)$$

\begin{align}
    \frac{h_n}{r_n^2}K_n^{b,\infty}&(u_n^b(x)) = \;  \frac{h_n}{r_n^2}\int_{{\Omega}^b} W\left(\displaystyle{\nabla_{\alpha}u^b_n(x)} \;\; | \; \;
\displaystyle{\frac{1}{h_n}\nabla_{3}u^b_n(x)} 
\right)dx &\leq \nonumber 
\\ 
\nonumber
&\frac{h_n}{r_n^2}\left[\int_{{\Omega}^b \setminus B_n}W\left(\displaystyle{\nabla_{\alpha}u^b_n(x)} \; |  \;
\displaystyle{\frac{1}{h_n}\nabla_{3}u^b_n(x)} 
\right)dx + C\int_{B_n} \left[1 + \left| \left(\displaystyle{\nabla_{\alpha}u^b_n(x)} \; |  \;
\displaystyle{\frac{1}{h_n}\nabla_{3}u^b_n(x)} 
\right) \right|\right]dx\right]
&\leq
\\
\nonumber
\\
&\frac{h_n}{r_n^2}C|B_n| + \frac{h_n}{r_n^2}C\int_{B_n \setminus A_n}  (1-\phi_n(x_\alpha))\left| Id \right|dx
&+ \nonumber \\
  &\frac{h_n}{r_n^2}C\int_{B_n \setminus A_n}  \left|\left(u^a_n\left(\frac{x_\alpha}{r_n},-h_n x_3\right) \; - \; \psi_n^b(x) \right) \otimes \left(\displaystyle{\nabla_{\alpha}\phi_n(x_\alpha)} \;\; | \; \;
 0
\right) \right|dx &+
\nonumber 
\\
&\frac{h_n}{r_n^2}C\int_{B_n}  \phi_n(x_\alpha)\cdot\left|\left(\displaystyle{\frac{1}{r_n}\nabla_{\alpha}u^a_n} \;\; | \; \;
\displaystyle{-\nabla_{3}u^a_n}
\right)\left(\frac{x_\alpha}{r_n},-h_n x_3\right) \right|dx &\leq 
\nonumber
\\
\nonumber
\\ 
\label{int1}
&O(h_n) + 
 \frac{h_n}{r_n^2}C\int_{B_n}  \frac{1}{\beta r_n} \left|\left(u^a_n\left(\frac{x_\alpha}{r_n},-h_n x_3\right) \; - \; \psi_n^b(x) \right) \right|dx \ +
\\
\label{int2}
&\frac{h_n}{r_n^2}C\int_{B_n}  \left|\left(\displaystyle{\frac{1}{r_n}\nabla_{\alpha}u^a_n} \;\; | \; \;
\displaystyle{-\nabla_{3}u^a_n}
\right)\left(\frac{x_\alpha}{r_n},-h_n x_3\right) \right|dx.
\end{align}

Since $u^a\in W^{1,1}(]0,1[\;\RI^3)$, by the continuous embedding in $L^\infty$ (see \cite{Brezis}), it follows that $u^a$ is bounded in $L^\infty(]0,1[;\mathbb R^3])$. Hence, from \eqref{sequence_un_above} $(u^a_n)_n$ is also uniformly bounded in $L^\infty(]0,1[;\mathbb R^3)$. Noticing that $(\psi_n^b)_n$ is also bounded in $L^\infty(\Omega^b;\mathbb R^3)$ it follows that the integral in \eqref{int1} is $O(h_n/r_n)$ and therefore it goes to 0 as $n\to\infty$.

On the other hand, \eqref{int2}, after a change of variables,  equals to
$$
C\int_{\omega\x]0,h_n[} \left| \left(\frac{1}{r_n}\nabla_\alpha u_n^a \; | \; \nabla_3 u_n^a \right)\right| dx.
$$
From \eqref{conv_alpha}, \eqref{conv_3} and uniform integrability it results that the integral tends to 0.

Consequently for all $z^a \in W_0^{1,1}(]0,1[;\RI^{3\x2})$ and with $u^a\in (c^{a,\infty} + W_a^{1,1}(]0,1[;\RI^{3}))$, 
we have
\begin{align*}
\inf\left\{\liminf_n\left(K^{a,\infty}_n(  u^a_n)+
\frac{h_n}{r_n^{2}}K^{b,\infty}_n(  u^b_n)\right): (u_n^a, u_n^b)\in \mathcal U_n, u_n^a \overset{\ast}{\rightharpoonup} u^a \hbox{ in }BV(\Omega^a;\mathbb R^3),  \right. \\
\left.
u_n^b  \to (x_\alpha,0) \hbox{ in }W^{1,1}(\Omega^b;\mathbb R^3)\right\} \; \leq \; \int_{\Omega^a} W \left(z^a(x_3) \; | \; \nabla_3 u^a(x_3) \right) dx,
\end{align*}
where we recall that $z^a$ has been arbitrarily chosen.

Following along the lines the arguments in \cite[Proposition~7]{LeDR1}, relying on the density of $W^{1,1}_0(]0,1[;\mathbb R^{3\times 2})$ in $L^1(]0,1[;\mathbb R^{3\times 2})$ with respect to the $L^1$-strong convergence, and exploiting the dominated convergence theorem, and the definition of $\hat W$ in \eqref{hatW}, we have
$$
\inf \left\{  \int_{\Omega^a} W \left(z^a(x_3) \; | \; \nabla_3 u^a(x_3) \right) dx \; : \: z^a \in W_0^{1,1}(\Omega^a;\RI^{3\x2}) \right\} \leq \int_{\Omega^a} \hat{W}(\nabla_3 u^a(x_3))dx,
$$
thus the proof is completed.
\end{proof}

The following result, together with Proposition \ref{lbinfty0}, concludes the proof of Theorem \ref{generalrepresentation} (case $\ell=+\infty$, $\lim_{n\to \infty}\frac{h_n}{r_n}=0$), since it provides an upper bound for the relaxed energy in \eqref{probl_infty}.

\begin{Proposition}[Upper bound $(\infty,0)$]\label{ub_infty,0}
Under the same assumptions of Proposition \ref{lbinfty0}, it results 
\begin{align*}
\inf\left\{\liminf_nF_n^\infty(u^a_n,u^b_n): (u_n^a, u_n^b)\in \mathcal U_n^\infty,  
u_n^a \overset{\ast}{\rightharpoonup} u^a \hbox{ in }BV(\Omega^a;\mathbb R^3), 
\right.
\nonumber\\
\left.
u_n^b  \to (x_\alpha,0) \hbox{ in }W^{1,1}(\Omega^b;\mathbb R^3)\right\}\leq
K^{a,\infty}(u^a),
\end{align*}
for every $u^a \in BV(\;]0,1[;\RI^3))$, where $K^{a,\infty}$ is defined in \eqref{def:Ka}.
\end{Proposition}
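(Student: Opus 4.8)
The plan is to establish the upper bound for $G^\infty(u^a,(x_\alpha,0))$ by the usual two-step relaxation argument: first reduce to the Sobolev case already handled in Lemma \ref{ub_sobolev_infty}, then extend to a general $u^a\in BV(\,]0,1[;\RI^3)$ by a density/diagonalization argument. First I would introduce the functional $G^\infty(\cdot,(x_\alpha,0)):BV(\,]0,1[;\RI^3)\to\RI$ defined in \eqref{probl_infty} and observe, via Lemma \ref{preFlsc}, that it is lower semicontinuous with respect to the weak$^\ast$ convergence in $BV(\,]0,1[;\RI^3)$ (the component $u^b\equiv(x_\alpha,0)$ being fixed by the compactness result, Proposition \ref{Compactness_infty}). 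By Lemma \ref{ub_sobolev_infty}, for every $u^a\in\bigl(c^{a,\infty}+W^{1,1}_a(\,]0,1[;\RI^3)\bigr)$ we have
\[
G^\infty(u^a,(x_\alpha,0))\le \int_0^1\hat W(\nabla_3 u^a(x_3))\,dx_3.
\]

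Next I would invoke the abstract relaxation result for trace-constrained functionals, namely \cite[Example 4.1]{BFMTraces} applied to the one-dimensional functional $u^a\mapsto\int_0^1\hat W(\nabla_3 u^a)\,dx_3$ on $c^{a,\infty}+W^{1,1}_a(\,]0,1[;\RI^3)$, together with the hypotheses \eqref{coerci} and \eqref{Whatrec} on $\hat W$ (which, by \cite[Proposition 2.6]{BFMTraces}, transfer to $\hat W^{\ast\ast}$, giving $((\hat W)^{\ast\ast})^\infty=((\hat W)^\infty)^{\ast\ast}$ and the existence of the strong recession function). This yields that the $BV$-weak$^\ast$ relaxation of $\int_0^1\hat W(\nabla_3 u^a)\,dx_3$ on $BV(\,]0,1[;\RI^3)$, subject to the Dirichlet datum $c^{a,\infty}$ at $x_3=1$, is exactly
\[
\int_{]0,1[}\hat W^{\ast\ast}(\nabla_3 u^a)\,dx_3+\int_{]0,1[}(\hat W^{\ast\ast})^\infty\Bigl(\tfrac{dD^s_3 u^a}{d|D^s_3 u^a|}\Bigr)\,d|D^s_3 u^a|+(\hat W^{\ast\ast})^\infty\bigl(c^{a,\infty}-u^a(1^-)\bigr)=K^{a,\infty}(u^a),
\]
since the loads are neglected (Remark \ref{remloads}). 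Combining the lower semicontinuity of $G^\infty$ with the Sobolev-level bound, $G^\infty$ is an $L^1$-lsc functional bounded above on a dense subset by $\int_0^1\hat W(\nabla_3 u^a)\,dx_3$, hence it is bounded above by the relaxation of the latter, i.e. $G^\infty(u^a,(x_\alpha,0))\le K^{a,\infty}(u^a)$ for every $u^a\in BV(\,]0,1[;\RI^3)$; a standard diagonalization then produces the recovery sequence in $\U_n^\infty$ realizing the $\limsup$ (equivalently $\liminf$) bound.

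The main obstacle, as usual in these constructions, is ensuring the recovery sequence built in Lemma \ref{ub_sobolev_infty} keeps the below-component $u^b_n$ admissible and energetically negligible: the cut-off construction \eqref{seq_un_below}, \eqref{cutoff} must match the boundary datum $(x_\alpha,h_nx_3)$ on $\partial\omega\times]-1,0[$, match $u^a_n$ on $r_n\omega\times\{0\}$, and have $\tfrac{h_n}{r_n^2}K^{b,\infty}_n(u^b_n)\to 0$ — this is where the hypothesis $h_n/r_n\to0$ is crucially used, since the gradient-of-cut-off term scales like $\tfrac{h_n}{r_n^2}\cdot\tfrac{1}{r_n}\cdot|B_n|\cdot\|u^a_n-\psi^b_n\|_\infty$, which is $O(h_n/r_n)\to0$ only in this regime; the Cantor and jump parts of a general $u^a$ add no new difficulty because they are handled purely on the $\,]0,1[$ factor via the abstract relaxation, the $\Omega^b$-construction being insensitive to them. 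I would therefore present the argument by: (i) recalling $G^\infty$ and its lsc; (ii) citing Lemma \ref{ub_sobolev_infty}; (iii) applying \cite[Example 4.1]{BFMTraces} to identify the relaxation; (iv) concluding by lsc plus density, with the diagonalization spelled out briefly.
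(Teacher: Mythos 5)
Your proposal is correct and follows essentially the same route as the paper: the paper's proof likewise combines the lower semicontinuity of $G^\infty$ from Lemma \ref{preFlsc} with the Sobolev-level bound of Lemma \ref{ub_sobolev_infty} and identifies the weak$^\ast$ relaxation of $u^a\mapsto\int_0^1\hat W(\nabla_3 u^a)\,dx_3$ on $c^{a,\infty}+W^{1,1}_a(]0,1[;\mathbb R^3)$ with $K^{a,\infty}$ via \cite[Example 4.1]{BFMTraces}. Your additional remarks on where $h_n/r_n\to 0$ enters the cut-off estimate are accurate but belong to the proof of Lemma \ref{ub_sobolev_infty} rather than to this proposition.
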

\begin{proof}[Proof]
The proof relies on the same arguments exploited in the proof of  Proposition \ref{2ndestimateub}, namely on the lower semicontinuity of the functional in \eqref{probl_infty} with respect to the weak * convergence in $BV(\Omega^a;\mathbb R^3)$ (see Lemma \ref{preFlsc}, and the relaxation with respect to the same topology of the functional $F_a: c^{a,\infty}+ W^{1,1}_a(]0,1[;\mathbb R^3)\to \mathbb R^+$ defined as $\int_{\Omega^a} \hat W(\nabla_3 u^a(x_3)dx$). Indeed the latter relaxed functional coincides with $K^{a,\infty}$ in \eqref{def:Ka}, in view of \cite[Example 4.1]{BFMTraces}.
\end{proof}

\subsubsection{Case $\displaystyle\frac{h_n}{r_n} \to \infty$}

 In the next lemma we prove that the sequence of deformations $u^b_n(x_\alpha, x_3)-(x_\alpha, h_n x_3)$ on the thin film below admits the origin as a Lebesgue point. This result will be instrumental to deduce a penalization term in the final relaxed energy regarding the uncoupled behaviour of $u^a$ at the origin.

\begin{Proposition}
\label{traceconvergence}
    Let $(g_n)_{n\in\N}\subset W^{1,1}(\Omega^b;\RI^3)$ satisfy $g_n\to 0$ in $W^{1,1}(\Omega^b;\RI^3)$. Assume that,
    \begin{align}
    \label{growthalphaprop}
        &|\nabla_\alpha g_n|_{L^1(\Omega^b;\RI^3)} \leq C\frac{r_n^2}{h_n},  &\forall n\in \N,
        \\
        \label{growth3prop}
        & |\nabla_3 g_n|_{L^1(\Omega^b;\RI^3)} \leq C r_n^2   &\hbox{as } n \to \infty,
        \\
        \label{boundaryprop}
        &g_n|_{\partial\omega\x]-1,0[}= \bf0,  &\forall n\in \N,
        \\
        &\frac{h_n}{r_n} \to \infty, \quad r_n,h_n\to0 &\hbox{as } n \to \infty \label{inftyinfty}.
    \end{align}

    Then,
    $$
    \lim_{n\to \infty} \int_{\omega}|g_n|(r_nx_\alpha,0)dx_\alpha = 0.
    $$
\end{Proposition}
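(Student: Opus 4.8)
The plan is to reduce the assertion to one–dimensional estimates in the $x_3$-variable combined with the two–dimensional Sobolev–Poincar\'e inequality in the cross section, exploiting the vanishing trace on $\partial\omega\times]-1,0[$ and the smallness of the disk $r_n\omega$. After the change of variables $y_\alpha=r_nx_\alpha$ the quantity to be estimated is $\int_\omega|g_n(r_nx_\alpha,0)|\,dx_\alpha=\frac1{r_n^2}\int_{r_n\omega}|g_n(y_\alpha,0)|\,dy_\alpha$. Writing, for a.e. $y_\alpha$ and every $x_3\in]-1,0[$, $g_n(y_\alpha,0)=g_n(y_\alpha,x_3)+\int_{x_3}^0\partial_3 g_n(y_\alpha,t)\,dt$, averaging over $x_3\in]-\sigma,0[$ for a parameter $\sigma\in]0,1]$ to be chosen, integrating over $y_\alpha\in r_n\omega$ and using Fubini, one gets
$$\frac1{r_n^2}\int_{r_n\omega}|g_n(y_\alpha,0)|\,dy_\alpha\le \frac1{\sigma r_n^2}\,\|g_n\|_{L^1(r_n\omega\times]-\sigma,0[)}+\frac1{r_n^2}\,\|\nabla_3 g_n\|_{L^1(r_n\omega\times]-\sigma,0[)}.$$

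For the first summand I would use, for a.e. $x_3$, the scaling–invariant Gagliardo–Nirenberg–Sobolev inequality $\|g_n(\cdot,x_3)\|_{L^2(\omega)}\le C\|\nabla_\alpha g_n(\cdot,x_3)\|_{L^1(\omega)}$, which is legitimate because $g_n(\cdot,x_3)$ vanishes on $\partial\omega$ by \eqref{boundaryprop}; H\"older's inequality on $r_n\omega$ then gives $\|g_n(\cdot,x_3)\|_{L^1(r_n\omega)}\le|r_n\omega|^{1/2}\|g_n(\cdot,x_3)\|_{L^2(\omega)}\le Cr_n\|\nabla_\alpha g_n(\cdot,x_3)\|_{L^1(\omega)}$, and integrating in $x_3$ together with \eqref{growthalphaprop} yields $\|g_n\|_{L^1(r_n\omega\times]-\sigma,0[)}\le Cr_n\|\nabla_\alpha g_n\|_{L^1(\Omega^b)}\le Cr_n^3/h_n$. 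Hence the first summand is at most $Cr_n/(\sigma h_n)$, which tends to $0$ — already for any fixed $\sigma$ — exactly because $h_n/r_n\to\infty$ by \eqref{inftyinfty}. This is the step where the finer regime $h_n/r_n\to\infty$, rather than merely $\ell=+\infty$, enters.

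For the second summand, \eqref{growth3prop} gives $\frac1{r_n^2}\|\nabla_3 g_n\|_{L^1(\Omega^b)}$ bounded, so the estimate must be refined: I would choose the slab width $\sigma=\sigma_n\to0$ adaptively (still with $\sigma_n\gg r_n/h_n$, to keep the first summand negligible), and combine \eqref{growth3prop} with the fact that $\nabla_3 g_n\to0$ in $L^1(\Omega^b)$ (which follows from $g_n\to0$ in $W^{1,1}$) and with absolute continuity of the Lebesgue integral, using the vanishing trace on $\partial\omega\times]-1,0[$ and the much stronger smallness of $\nabla_\alpha g_n$ compared to $\nabla_3 g_n$ (a factor $1/h_n$) to exclude that $\nabla_3 g_n$ concentrates on the thin cylinder $r_n\omega\times]-\sigma_n,0[$ sitting over the junction disk. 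I expect this non–concentration estimate — controlling $\frac1{r_n^2}\|\nabla_3 g_n\|_{L^1(r_n\omega\times]-\sigma_n,0[)}$ — to be the main obstacle of the proof; once it is shown to vanish, adding the two summands and letting $n\to\infty$ gives $\int_\omega|g_n(r_nx_\alpha,0)|\,dx_\alpha\to0$, as claimed. Passing to subsequences at intermediate steps is harmless because the final bound holds for the whole sequence.
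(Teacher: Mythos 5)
Your reduction and your treatment of the first summand are sound: the slicewise estimate $\|g_n(\cdot,x_3)\|_{L^1(r_n\omega)}\le C r_n\|\nabla_\alpha g_n(\cdot,x_3)\|_{L^1(\omega)}$, obtained via Sobolev--Poincar\'e and H\"older on the small disk, is essentially the same bound the paper derives by integrating $\partial_2 g_n$ along lines from the lateral boundary $\partial\omega\times]-1,0[$ (where $g_n$ vanishes) into $]-\delta r_n,\delta r_n[^2$; combined with \eqref{growthalphaprop} it gives $\|g_n\|_{L^1(r_n\omega\times]-1,0[)}\le C r_n^3/h_n$, and this is where $h_n/r_n\to\infty$ enters, exactly as you say.

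The second summand, however, is a genuine gap, and none of the tools you list can close it from the stated hypotheses. What you need is
$\frac{1}{r_n^2}\|\nabla_3 g_n\|_{L^1(r_n\omega\times]-\sigma_n,0[)}\to 0$,
i.e.\ that the mass of $\nabla_3 g_n$ carried by the thin cylinder over the junction disk is $o(r_n^2)$. Hypothesis \eqref{growth3prop} only gives total mass $O(r_n^2)$, so restricting to $r_n\omega\times]-\sigma_n,0[$ yields a quantity that is merely bounded; the qualitative convergence $\nabla_3 g_n\to 0$ in $L^1(\Omega^b)$ and absolute continuity of the integral provide no rate and cannot upgrade $O(r_n^2)$ to $o(r_n^2)$ on a set whose measure is itself comparable to $r_n^2\sigma_n$; and the smallness of $\nabla_\alpha g_n$ controls the size of $g_n$ on the cylinder (your first summand) but says nothing about where $\nabla_3 g_n$ concentrates. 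So the "main obstacle" you flag is not a technical refinement you can defer --- it is the whole content of the statement, and your proposal does not prove it. For comparison, the paper does not argue through the fundamental theorem of calculus in $x_3$ at all: it rescales anisotropically, setting $v_n(x_\alpha,x_3):=g_n(r_nx_\alpha,\rho_nx_3)$ with $\rho_n=(r_n/h_n)^{1/2}$, proves $v_n\to 0$ in $W^{1,1}(\Omega^b;\mathbb R^3)$ using the lateral-boundary estimate for $\|v_n\|_{L^1}$ and $\|\nabla_\alpha v_n\|_{L^1}$, and concludes by continuity of the trace operator. Note, though, that the $\nabla_3$-estimate in that argument reduces, after the change of variables, to precisely the quantity $\frac{1}{r_n^2}\|\nabla_3 g_n\|_{L^1(r_n\omega\times]-\rho_n,0[)}$ that you isolated (with $\sigma_n=\rho_n$); so you have correctly located the crux of the proof, but you have not resolved it, and you should not present the remaining step as routine.
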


\begin{proof}

    Let $\delta>0$ and consider $n\in \N$ large enough satisfying
    \begin{align}
    \label{inclusion_omega}
       ]-\delta r_n,\delta r_n[^2 \; \subset   \omega \; &\subset \; ]-\delta,\delta[^2.
    \end{align}

 \noindent Recalling \eqref{boundaryprop} we can extend $g_n\in W^{1,1}_0(\Omega^b;\mathbb R^3)$ to $W^{1,1}_0(\RI^3;\mathbb R^3)$ as follows:
  \begin{equation}
      \tilde g_n = \left\{\begin{array}{ll}
        g_n &\hbox{ in } \Omega^b,\\
        \bf0 &\hbox{ in }  \RI^3 \setminus \Omega^b.
        \end{array}\right.
        \label{def:extension_gn}
  \end{equation}
  For easier notation we simply denote $g_n$ as this extension.

    \noindent From the absolute continuity on lines of a function in $W^{1,1}(\Omega^b;\RI^3)$, we can apply the Fundamental Theorem of Calculus on the lines between $(\overline{x}_1,\delta,\overline{x}_3)$ and $(\overline{x}_1,\overline{x}_2,\overline{x}_3)$ for a.e. $(\overline{x}_1,\overline{x}_2,\overline{x}_3)\in ]-\delta r_n,\delta r_n[^2\x]-1,0[$. 

    $$
    g_n(\overline{x}_1,\overline{x}_2,\overline{x}_3) = g_n(\overline{x}_1,\overline{x}_2,\overline{x}_3)-g_n(\overline{x}_1,\delta,\overline{x}_3) = \int_{\overline{x}_2}^{\delta} \frac{\partial}{\partial x_2}g_n(\overline{x}_1,y,\overline{x}_3) dy
    $$
    where in the first equality we notice that from \eqref{inclusion_omega} we have $(\overline{x}_1,\delta,\overline{x}_3)\in \RI^3 \setminus \Omega^b$ therefore from \eqref{def:extension_gn}\; $ g_n(\overline{x}_1,\delta,\overline{x}_3)=0$.

    We can now estimate $|g_n(\cdot,x_3)|$ in $]-\delta r_n,\delta r_n[^2$. For $(\overline{x}_1,\overline{x}_2)\in ]-\delta r_n,\delta r_n[^2$

    \begin{align}
    |g_n(\overline{x}_1,\overline{x}_2,\overline{x}_3)| \leq \int_{\overline{x}_2}^{\delta} \left|\frac{\partial}{\partial x_2}g_n(\overline{x}_1,y,\overline{x}_3)\right| dy &\leq \int_{\overline{x}_2}^{\delta} \left|\nabla_\alpha g_n(\overline{x}_1,y,\overline{x}_3)\right| dy 
    \leq \int_{-\delta r_n}^{\delta} \left|\nabla_\alpha g_n(\overline{x}_1,y,\overline{x}_3)\right| dy.
    \label{point_ineq}
    \end{align}

    By noticing that this last term doesn't depend on $\overline{x}_2$ we get that by integrating in $]-\delta r_n,\delta r_n[^2$ and using \eqref{inclusion_omega} in the first inequality

    \begin{align}
    \|g_n(\cdot,\overline{x}_3)\|_{L^1(r_n\omega)} \leq \|g_n(\cdot,\overline{x}_3)\|_{L^1(]-\delta r_n,\delta r_n[^2)} &\leq 2\delta r_n \int_{-\delta r_n}^{\delta r_n} \int^{\delta}_{-\delta r_n} \left|\nabla_\alpha g_n(x_1,x_2,\overline{x}_3)\right| dx_2dx_1.
    \label{inequality_x3_fixed}
    \end{align}

    Notice that these arguments are valid for $\overline{x}_3$ a.e.. Since from \eqref{def:extension_gn} $\nabla_\alpha g_n(x)=0$ in $]-\delta,\delta[^2 \setminus \omega \; \x ]-1,0[$, then integrating \eqref{inequality_x3_fixed} on $x_3$ we get

    \begin{align}
    \|g_n\|_{L^1(r_n\omega \x ]-1,0[)} \; \leq \; 2\delta r_n \|\nabla_\alpha g_n\|_{L^1(]-\delta r_n,\delta[\x]-\delta r_n,\delta r_n[ \x ]-1,0[)} &= 2\delta r_n \|\nabla_\alpha g_n\|_{L^1(r_n\omega\x]-1,0[)}
    \nonumber
    \\
    &\leq 2\delta r_n\|\nabla_\alpha g_n\|_{L^1(\O^b)}.\label{ineq:trace_conv} 
    \end{align}

Let $v_n (x_\alpha,x_3):=g_n(r_nx_\alpha,\rho_nx_3)$ be defined on $\Omega^b$, with $\rho_n\to0$ to be specified later. The idea is to prove the following three convergences
    \begin{equation}\label{prop_des_conv}
    \hbox{1) }v_n \to 0 \hin L^1\domb, \quad 
    \hbox{2) }\nabla_\alpha v_n \to 0 \hin L^1(\Omega; \mathbb {R}^{3\times 2}),\quad
    \hbox{3) }\nabla_3v_n \to 0 \hin L^1\domb.
    \end{equation}
    implying $v_n\to 0 \hin W^{1,1}\domb$ and by the continuity of the trace operator we get the desired result
    $$
    \lim_n \int_{\omega\times \{0\}}|g_n|(r_nx_\alpha,0)dx_\alpha = \lim_n \int_{\omega\times \{0\}}|v_n|dx_\alpha = 0.
    $$

    Proof of 1).
    \begin{align}\nonumber
        \int_{\Omega^b} |v_n|dx = \frac{1}{r_n^2\rho_n}\int_{r_n\o\x(-\rho_n,0)}|g_n|dx \leq C\frac{1}{r_n\rho_n}\|\nabla_\alpha g_n\|_{L^1(\Omega^b)} \leq C\frac{r_n}{h_n}\frac{1}{\rho_n}
    \end{align}
    where in the first inequality it was used \eqref{ineq:trace_conv} and in the last in the last inequality \eqref{growthalphaprop}.
    
    Proof of 2)
    \begin{align}\nonumber
        \int_{\Omega^b} |\nabla_\alpha v_n|dx = \frac{1}{r_n^2\rho_n}\int_{r_n\o\x(-\rho_n,0)}r_n|\nabla_\alpha g_n|dx \leq  C\frac{r_n}{h_n}\frac{1}{\rho_n}
    \end{align}
    where in the inequality it was used again \eqref{growthalphaprop}

    Proof of 3)
    \begin{align}\nonumber
        \int_{\Omega^b} |\nabla_3 v_n|dx = \frac{1}{r_n^2\rho_n}\int_{r_n\o\x(-\rho_n,0)}\rho_n|\nabla_3 g_n|dx \leq  C\rho_n
    \end{align}
    where it was used \eqref{growth3prop}.

    Choosing $\rho_n= \displaystyle\left(\frac{r_n}{h_n}\right)^{1/2}$ we get the desired convergences.
    \color{black}
    
\end{proof}

Based on the result just stated we can extend Proposition \ref{Compactness_infty}, specifically for $\frac{h_n}{r_n}\to \infty$. 

\begin{Lemma}[Compactness $(\infty, \infty)$] \label{lemma_conv_trace_above} Under the same assumptions of Proposition \ref{Compactness_infty} and assuming \eqref{inftyinfty} then,
for every sequence $( u_n^a, u_n^b) \in \mathcal{U}^\infty_n$  such that 
         \(\sup_{n\in\N} |F_n^\infty[(  u_n^a,  u_n^b)]|
<+\infty\),   it follows that
\eqref{convinftycomp} holds and 
$$
u_n^a(\cdot,0) \to {\bf 0} \hbox{ in } L^1(\omega;\RI^3).
$$
\end{Lemma}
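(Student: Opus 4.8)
The plan is to build on the compactness already obtained in Proposition~\ref{Compactness_infty}, where for the given sequence $(u_n^a,u_n^b)\in\mathcal U_n^\infty$ with uniformly bounded energy we extracted a subsequence (not relabelled) so that $u_n^a\weakstar u^a$ in $BV(\Omega^a;\RI^3)$, $u_n^b\to(x_\alpha,0)$ in $W^{1,1}(\Omega^b;\RI^3)$, and $\frac{1}{h_n}\nabla_3u_n^b\to(0_\alpha,1)$ in $L^1(\Omega^b;\RI^3)$. The only thing new to establish is the convergence of the trace of $u_n^a$ at $x_3=0$ to $\bf 0$. The key idea is to transfer the junction condition \eqref{eq:bc}, namely $u_n^a(x_\alpha,0)=u_n^b(r_nx_\alpha,0)$ a.e.\ in $\omega$, into a statement about the thin film below, and then apply Proposition~\ref{traceconvergence}.

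First I would set $g_n(x_\alpha,x_3):=u_n^b(x_\alpha,x_3)-(x_\alpha,h_nx_3)$ on $\Omega^b$. Since $u_n^b|_{\partial\omega\x]-1,0[}=(x_\alpha,h_nx_3)$ by the Dirichlet condition \eqref{dirichletinfty}, we get $g_n|_{\partial\omega\x]-1,0[}={\bf 0}$, i.e.\ \eqref{boundaryprop}. From Proposition~\ref{Compactness_infty} (the strong $W^{1,1}$ convergence $u_n^b\to(x_\alpha,0)$) we have $g_n\to 0$ in $W^{1,1}(\Omega^b;\RI^3)$. The growth bounds \eqref{growthalphaprop} and \eqref{growth3prop} follow from \eqref{growth_alpha_infty} and \eqref{growth_3_infty} in the proof of Proposition~\ref{Compactness_infty}: indeed $\nabla_\alpha g_n=\nabla_\alpha u_n^b-Id_\alpha$, whence $\|\nabla_\alpha g_n\|_{L^1(\Omega^b)}\le C r_n^2/h_n$, and $\nabla_3 g_n=\nabla_3 u_n^b-(0_\alpha,h_n)$, so $\|\nabla_3 g_n\|_{L^1(\Omega^b)}\le h_n\cdot C r_n^2/h_n=Cr_n^2$. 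Together with the standing assumption \eqref{inftyinfty} that $h_n/r_n\to\infty$ and $r_n,h_n\to 0$, all hypotheses of Proposition~\ref{traceconvergence} are met, so
\[
\lim_{n\to\infty}\int_\omega |g_n|(r_nx_\alpha,0)\,dx_\alpha=0.
\]

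Next I would combine this with the junction condition. For a.e.\ $x_\alpha\in\omega$, the internal trace at $x_3=0$ satisfies $u_n^a(x_\alpha,0)=u_n^b(r_nx_\alpha,0)=g_n(r_nx_\alpha,0)+(r_nx_\alpha,0)$. Therefore
\[
\int_\omega |u_n^a(x_\alpha,0)|\,dx_\alpha\le\int_\omega |g_n|(r_nx_\alpha,0)\,dx_\alpha+r_n\int_\omega |x_\alpha|\,dx_\alpha,
\]
and both terms on the right go to $0$ as $n\to\infty$ (the first by the display above, the second since $r_n\to 0$ and $\omega$ is bounded). Here one should note that after passing to the subsequence on which $u_n^a\weakstar u^a$ in $BV(\Omega^a;\RI^3)$, the trace $u_n^a(\cdot,0)$ is well defined (interpreting $u_n^a\in W^{1,1}(\Omega^a;\RI^3)\subset BV$ and using that $\omega\x\{0\}$ is a portion of $\partial\Omega^a$); on this subsequence the computation above gives $u_n^a(\cdot,0)\to{\bf 0}$ in $L^1(\omega;\RI^3)$. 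Finally, since every subsequence of the original sequence has a further subsequence for which Proposition~\ref{Compactness_infty} applies and the same limit $\bf 0$ is produced, the full (sub)sequence converges, and \eqref{convinftycomp} holds for the same subsequence by Proposition~\ref{Compactness_infty}.

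The main obstacle is really the content of Proposition~\ref{traceconvergence}, which is already proved in the excerpt; once that is in hand the present lemma is a short bookkeeping argument consisting of verifying its hypotheses for $g_n=u_n^b-(x_\alpha,h_nx_3)$ and pushing the junction condition through the rescaling $x_\alpha\mapsto r_nx_\alpha$. The one subtlety worth care is the identification of the \emph{internal} traces of $u_n^a$ and $u_n^b$ on the common interface $\omega\x\{0\}$ with the boundary traces appearing in \eqref{eq:bc} and in Proposition~\ref{traceconvergence}, which is legitimate because $u_n^a,u_n^b$ are Sobolev functions on their respective (Lipschitz) cylinders, so their traces on $\omega\x\{0\}$ are the classical $W^{1,1}$ boundary traces and the rescaled junction identity \eqref{eq:bc} holds for these traces.
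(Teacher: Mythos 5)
Your proposal is correct and follows essentially the same route as the paper: you apply Proposition \ref{traceconvergence} to $g_n:=u_n^b-(x_\alpha,h_nx_3)$, verifying its hypotheses from the Dirichlet datum, the strong $W^{1,1}$ convergence of $u_n^b$, and the bounds \eqref{growth_alpha_infty}--\eqref{growth_3_infty}, and then transfer the conclusion to $u_n^a(\cdot,0)$ via the junction condition \eqref{eq:bc} together with $r_n\to 0$. This is exactly the paper's argument, with your remarks on traces and subsequences being harmless additional bookkeeping.
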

\begin{proof}
    As in the other analogous results above, we assume null loads, referring to Remark \ref{remloads} for the general case. From Proposition \ref{Compactness_infty}, the assumptions on the boundary \eqref{dirichletinfty}, i.e. \begin{align*}
u_n^b|_{\partial\omega\x]-1,0[}=(x_\alpha,h_nx_3),
    \end{align*}
and the second condition in \eqref{convinftycomp},
    namely
    \begin{align*}
u_n^b \to (x_\alpha,0) \hbox{ in } W^{1,1}(\Omega^b;\RI^3),
    \end{align*}
we have that \eqref{growth_3_infty} and \eqref{growth_alpha_infty} hold, which, in turn, imply
    \begin{align*}
    \int_{{\Omega}^b}
\left|\nabla_{\alpha}u^b_n 
 \;  - \;Id_\alpha \right| dx \leq \frac{r_n^2}{h_n}C,
\\
\int_{\Omega^b} \left|\left( \nabla_{3}u^b_n 
 \;  - \;(0_\alpha,h_n) \right)\right|dx \leq C r_n^2
\end{align*}
with $Id_\alpha$ 
as in \eqref{matrixdef}.

Applying Lemma \ref{traceconvergence} to the function $g_n:=u_n^b-(x_\alpha,h_nx_3)$ and recalling that $( u_n^a, u_n^b) \in \mathcal{U}^\infty_n$ and in particular \eqref{eq:bc} holds, we get

\begin{align*}
\lim_{n \to \infty} \int_{\omega}|u_n^b(r_nx_\alpha,0)-(r_nx_\alpha,0)|dx_\alpha = &\lim_{n\to \infty} \int_{\omega}|u_n^a(x_\alpha,0)-(r_nx_\alpha,0)|dx_\alpha = 0,
\end{align*}
which implies
\begin{align*}
u_n^a(\cdot,0) \to \mathbf{0} \hbox{ in } L^1(\omega;\RI^3),
\end{align*}
that concludes the proof.
\end{proof}

In view of Lemma \ref{lemma_conv_trace_above}, we can rewrite problem \eqref{probl_infty} as 

\begin{align}\label{probl2_infty}
G^\infty(u^a, (x_\alpha, 0))=\inf\left\{\liminf_nF_n^\infty(u_n^a, u_n^b): (u_n^a, u_n^b)\in \mathcal U^\infty_n,  u_n^a \overset{\ast}{\rightharpoonup} u^a \hbox{ in }BV(\Omega^a;\mathbb R^3), \right. \nonumber
\\
\left.u_n^a(\cdot,0) \to {\bf 0} \hbox{ in } L^1(\omega;\RI^3),
u_n^b  \to (x_\alpha,0) \hbox{ in }W^{1,1}(\Omega^b;\mathbb R^3)\right\}.
\end{align}

The representation of Theorem \ref{generalrepresentation}, given by \eqref{probl_infty,infty}, will be achieved by proving a double inequality.

\begin{Proposition}[Lower bound $(\infty, \infty)$]
\label{prop_lb_infty2}
    Consider the Borel function  \(W : \mathbb{R}^{3\x 3}\to 
    \mathbb{R}\) satisfying  \eqref{coerci} and \eqref{eq:growth1}.  
    For  $n \in \mathbb N$ let the spaces
    \(\mathcal{U}_n^\infty\) be introduced in \eqref{Vn}
    where the Dirichlet conditions in $\U_n^\infty$ are as in \eqref{dirichletinfty}.
    Let $F_n^\infty$  be the energy functional defined in \eqref{eq:Fninfty},
    where \(r_n\), \(h_n\) and the loads $H_n^{a,\infty},H_n^{b,\infty}$ satisfy conditions \eqref{hrzero}, \eqref{loadinfty} and $\lim_{n\to \infty}\frac{h_n}{r_n}=\infty$.
    Assume also that the recession function of $\hat W$ \eqref{hatW} defined as in \eqref{S101rec}, satisfy \eqref{Whatrec}.
    
    Then for every $u^a \in BV(]0,1[;\RI^3)$,
    \begin{align*}\nonumber
\inf\left\{\liminf_nF_n^\infty(u_n^a, u_n^b): (u_n^a, u_n^b)\in \mathcal U^\infty_n,  u_n^a \overset{\ast}{\rightharpoonup} u^a \hbox{ in }BV(\Omega^a;\mathbb R^3), u_n^a(\cdot,0) \to {\bf 0} \hbox{ in } L^1(\omega;\RI^3),\right. 
\\
\left.
u_n^b  \to (x_\alpha,0) \hbox{ in }W^{1,1}(\Omega^b;\mathbb R^3)\right\} \geq K^{a,\infty}(u^a) + \hat{W}^{**} (u^a(0^+)),
\end{align*}
where $K^{a,\infty}$ is defined in \eqref{def:Ka}.
\end{Proposition}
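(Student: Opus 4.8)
The plan is to reduce everything to the energy on $\Omega^a$ and then to turn the constraint $u_n^a(\cdot,0)\to\mathbf 0$ in $L^1(\omega;\RI^3)$ into a genuine trace‑zero condition on $\omega\times\{0\}$, so that the relaxation result of \cite{BFMTraces} delivers both the ``interior'' contribution $K^{a,\infty}(u^a)$ and the extra boundary penalization at the junction. As in Propositions \ref{lb} and \ref{lbinfty0}, by Remark \ref{remloads} and Lemma \ref{lemma29} we may assume the loads are null and $W$ quasiconvex (hence Lipschitz, by \eqref{coerci}), and it suffices to take an arbitrary $(u_n^a,u_n^b)\in\mathcal U_n^\infty$ realizing $u_n^a\weakstar u^a$ in $BV(\Omega^a;\RI^3)$, $u_n^a(\cdot,0)\to\mathbf 0$ in $L^1(\omega;\RI^3)$ and $u_n^b\to(x_\alpha,0)$ in $W^{1,1}(\Omega^b;\RI^3)$, with $\liminf_nF_n^\infty(u_n^a,u_n^b)<+\infty$ (a limit, up to a subsequence). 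Since \eqref{eq:growth1} gives $W\ge 0$, the term $\tfrac{h_n}{r_n^2}K_n^{b,\infty}(u_n^b)$ is nonnegative and may be dropped, while $W\bigl(\tfrac1{r_n}\nabla_\alpha u_n^a\,|\,\nabla_3 u_n^a\bigr)\ge\hat W(\nabla_3 u_n^a)\ge\hat W^{\ast\ast}(\nabla_3 u_n^a)$; hence $F_n^\infty(u_n^a,u_n^b)\ge\int_{\Omega^a}\hat W^{\ast\ast}(\nabla_3 u_n^a)\,dx$, exactly as at the start of Proposition \ref{lbinfty0}.

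The new ingredient is to absorb the boundary datum on $\omega\times\{0\}$. Using the surjectivity of the trace operator $W^{1,1}(\Omega^a;\RI^3)\to L^1(\omega;\RI^3)$ on that face together with a bounded linear right inverse, I would pick $E_n\in W^{1,1}(\Omega^a;\RI^3)$ having trace $u_n^a(\cdot,0)$ on $\omega\times\{0\}$, vanishing in a neighbourhood of $\omega\times\{1\}$, and with $\|E_n\|_{W^{1,1}(\Omega^a;\RI^3)}\le C\|u_n^a(\cdot,0)\|_{L^1(\omega;\RI^3)}\to0$. Then $\tilde u_n^a:=u_n^a-E_n$ has trace $\mathbf 0$ on $\omega\times\{0\}$ and $c^{a,\infty}$ on $\omega\times\{1\}$, still satisfies $\tilde u_n^a\weakstar u^a$ in $BV(\Omega^a;\RI^3)$, and — since $\hat W^{\ast\ast}$ has linear growth and is therefore Lipschitz — $\int_{\Omega^a}\hat W^{\ast\ast}(\nabla_3 u_n^a)\,dx=\int_{\Omega^a}\hat W^{\ast\ast}(\nabla_3\tilde u_n^a)\,dx+o(1)$.

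Next I would apply \cite[Example 4.1]{BFMTraces} in the same way as in Proposition \ref{lb} (perturbing $\hat W^{\ast\ast}(\nabla_3\,\cdot\,)$ by $\varepsilon\|\nabla_\alpha\,\cdot\,\|_{L^1}$ to get an integrand coercive in all entries of the matrix and then letting $\varepsilon\to0$, and using \cite[Proposition 2.6]{BFMTraces} with \eqref{Whatrec} to identify the strong recession function), but now on $\Omega^a$ with the prescribed values $\mathbf 0$ on $\omega\times\{0\}$ and $c^{a,\infty}$ on $\omega\times\{1\}$ (equivalently, one may split $\Omega^a$ into $\omega\times]0,\delta[$ and $\omega\times]\delta,1[$ and apply the relaxation on each piece). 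Since $\hat W^{\ast\ast}$ depends only on the last column and $u^a$ is independent of $x_\alpha$, the relaxed functional evaluated at $u^a$ equals
\begin{align*}
\int_{]0,1[}\hat W^{\ast\ast}(\nabla_3 u^a)\,dx_3 &+\int_{]0,1[}(\hat W^{\ast\ast})^\infty\!\left(\frac{dD_3^s u^a}{d|D_3^s u^a|}\right)d|D_3^s u^a|\\
&+(\hat W^{\ast\ast})^\infty\!\bigl(c^{a,\infty}-u^a(1^-)\bigr)+(\hat W^{\ast\ast})^\infty\!\bigl(u^a(0^+)\bigr),
\end{align*}
the last summand being the boundary term at $\omega\times\{0\}$ computed just as the one at $\omega\times\{1\}$ (outward normal $-e_3$, prescribed value $\mathbf 0$). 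This quantity is precisely $K^{a,\infty}(u^a)+(\hat W^{\ast\ast})^\infty(u^a(0^+))$, which proves the inequality since $(u_n^a,u_n^b)$ was an arbitrary admissible sequence.

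The main obstacle is the second paragraph: converting the merely $L^1$ convergence of the traces $u_n^a(\cdot,0)$ into data on which the trace‑constrained relaxation of \cite{BFMTraces} can be used, which forces one to subtract a Gagliardo‑type extension with vanishing $W^{1,1}$ norm and to invoke the Lipschitz continuity of $\hat W^{\ast\ast}$. A secondary technical point is that, the integrand depending only on $\nabla_3$, \cite[Example 4.1]{BFMTraces} must be applied through the $\varepsilon$‑perturbation device already employed in Proposition \ref{lb}, and one must verify that introducing the extra face $\omega\times\{0\}$ does not disturb the contribution of $\omega\times\{1\}$ already encoded in $K^{a,\infty}$.
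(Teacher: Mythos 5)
Your argument is correct and reaches the paper's conclusion, but it packages the key step differently. The paper also relies on Gagliardo's trace theorem plus \cite[Example 4.1]{BFMTraces}, yet instead of correcting the sequence it \emph{extends the domain}: it sets $\Omega^a_\varepsilon=\omega\times]-\varepsilon,1[$, fills the strip $\omega\times]-\varepsilon,0[$ with a Gagliardo lifting $g_n$ of the boundary datum ($u_n^a(\cdot,0)$ on $\{x_3=0\}$, $\mathbf 0$ elsewhere, so $\|g_n\|_{W^{1,1}}\le C\|u_n^a(\cdot,0)\|_{L^1}\to0$), and applies the one-face relaxation of Proposition \ref{lb} on $\Omega^a_\varepsilon$; the penalization $(\hat W^{\ast\ast})^\infty(u^a(0^+))$ then appears as the jump part of the extended limit $\tilde u^a$ (equal to $\mathbf 0$ on $]-\varepsilon,0[$ and $u^a$ on $]0,1[$) across $\omega\times\{0\}$, after subtracting the strip's contribution by dominated convergence and letting $\varepsilon\to0$. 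You instead subtract a small Gagliardo lifting $E_n$ inside $\Omega^a$ so that $\tilde u_n^a=u_n^a-E_n$ has exact zero trace on $\omega\times\{0\}$, use the Lipschitz continuity of $\hat W^{\ast\ast}$ to discard the $o(1)$ energy change, and then invoke the trace-constrained relaxation with Dirichlet data on \emph{both} faces, reading the junction term off as a boundary penalization rather than a jump. Your route avoids the auxiliary parameter $\varepsilon$ and the bookkeeping over the strip, at the price of needing \cite{BFMTraces} with data prescribed on two disjoint portions of the boundary (which it does cover) together with the same $\varepsilon$-perturbation device in $\nabla_\alpha$ already used in Proposition \ref{lb}. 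Two cosmetic points: Gagliardo's right inverse of the trace operator $W^{1,1}\to L^1(\partial\Omega)$ is bounded but \emph{not} linear (no bounded linear right inverse exists for $p=1$); only the uniform estimate $\|E_n\|_{W^{1,1}}\le C\|u_n^a(\cdot,0)\|_{L^1}$ is needed, exactly as in the paper's \eqref{gagliardo_ineq}. Also, the statement's ``$\hat W^{\ast\ast}(u^a(0^+))$'' is a typo for $(\hat W^{\ast\ast})^\infty(u^a(0^+))$ (compare \eqref{probl_infty,infty}); you correctly prove the latter.
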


\begin{proof}
The following functional $J^a:BV(]0,1[;\mathbb R^3)\to \mathbb R$ is clearly a lower bound to the left hand side of \eqref{probl2_infty}.

\begin{align}
J(u^a) := \inf\left\{\liminf_n \int_{\Omega^a} \hat{W}^{**}(\nabla_3 u_n^a(x))dx: u_n^a \in (W_a^{1,1}(\Omega^a;\RI^3) + c^{a,\infty} ),   \right. \nonumber
\\
\left.u_n^a \overset{\ast}{\rightharpoonup} u^a \hbox{ in }BV(\Omega^a;\mathbb R^3), u_n^a(\cdot,0) \to {\bf 0}  \hbox{ in } L^1(\omega;\RI^3)
\right\}.
\label{probl2_plate}
\end{align}

Now we prove $J(u^a)\geq K^a(u^a) + \hat{W}^{**} (u^a(0^+))$.
 Let $u^a \in BV(]0,1[;\mathbb R^3)$, and 
let $(u_n^a)_n$ be a recovery sequence for \eqref{probl2_plate}, 
i.e.

\begin{align}
    &u_n^a \in (W_a^{1,1}(\Omega^a;\RI^3) + c^{a,\infty} ), \quad \lim_{n\to \infty} \int_{\Omega^a} \hat{W}^{**}(\nabla_3 u_n^a(x))dx = J(u^a),\nonumber
    \\
    &u_n^a \overset{\ast}{\rightharpoonup} u^a \hbox{ in }BV(\Omega^a;\mathbb R^3), \quad u_n^a(\cdot,0) \to {\bf 0} \hbox{ in } L^1(\omega;\RI^3). \nonumber
\end{align}

Let $\Omega^a_\varepsilon := \omega\x]-\varepsilon,1[$ and $\Omega^{a-}_\varepsilon := \omega\x]-\varepsilon,0[$. Let also $\gamma_n\in L^1(\partial  \Omega^{a-}_\varepsilon;\RI^3)$ be defined as
\begin{equation}
\label{g_n_definition}
\gamma_n = \left\{\begin{array}{ll}
        u_n^a(\cdot,0) &\hbox{ in } \; \; \; \partial  \Omega^{a-}_\varepsilon \cap \{x_3=0\} ,\\
        {\bf 0} &\hbox{ in } \; \; \; \partial  \Omega^{a-}_\varepsilon \setminus \{x_3=0\} .
        \end{array}\right.
\end{equation}
By Gagliardo's trace theorem (see \cite{Gagl}), we can find a function $g_n \in W^{1,1}(\Omega^{a-}_\varepsilon;\RI^3)$ such that 
\begin{equation}
    \label{gagliardo_ineq}
    \hbox{tr} (g_n) = \gamma_n, \quad
    |g_n|_{W^{1,1}(\Omega^{a-}_\varepsilon)} \leq C |\gamma_n|_{L^1(\partial  \Omega^{a-}_\varepsilon)},
\end{equation}
for some $C=C(\Omega^{a-}_\varepsilon)>0$.

Consider now the following extensions:

\begin{align*}
\tilde{u}_n^a(x_\alpha,x_3) &= \left\{\begin{array}{ll}
        u_n^a(x_\alpha,x_3) &\hbox{ if } \; \;\;0<x_3 < 1,\\
        g_n(x_\alpha,x_3) &\hbox{ if } -\varepsilon<x_3 <0,
        \end{array}\right.
        \\
        \\
\tilde{u}^a(x_3) &= \left\{\begin{array}{ll}
        u^a(x_3) &\hbox{ if } \; \;\;0<x_3 < 1,\\
        {\bf 0} &\hbox{ if } -\varepsilon<x_3 <0.
        \end{array}\right.
\end{align*}

It is easily checked using \eqref{coerci}, \eqref{g_n_definition} and \eqref{gagliardo_ineq} that
:

\begin{align*}
    (\tilde{u}_n^a)_n \subset W^{1,1}(\Omega^a_\varepsilon;\mathbb R^3),
    \;
    \tilde{u}_n^a \to \tilde{u}^a \hbox{ in } L^1(\Omega^a_\varepsilon;\mathbb R^3),
    \;
    \sup_{n\in \N} |\nabla \tilde{u}_n^a|_{L^1(\Omega^a_\varepsilon)}  <  +\infty.
\end{align*} 

We have that $(\tilde{u}_n^a)_n$ is admissible to the following extended problem defined on $\Omega^a_\varepsilon$,

\begin{align*}
\inf\left\{\liminf_n \int_{\Omega^a_\varepsilon} \hat{W}^{**}(\nabla_3 u_n(x))dx: u_n \in (W_a^{1,1}(\Omega^a_\varepsilon;\RI^3) + c^{a,\infty} ),   
u_n \overset{\ast}{\rightharpoonup} \tilde{u}^a \hbox{ in }BV(\Omega^a_\varepsilon;\mathbb R^3)
\right\}.
\end{align*}

And so, arguing as in Proposition \ref{lb}, it results that

\begin{align}
\liminf_{n\to\infty}\int_{\Omega^a_\varepsilon} \hat{W}^{**}(\nabla_3 \tilde{u}_n^a(x))dx \geq
\int_{\Omega^a_\varepsilon}{\hat W}^{**}(
        \nabla_{3}u^a(x_3))dx\;  &+\; \int_{\Omega^a_\varepsilon}({\hat W}^{**})^{\infty}\left(\frac{d D^s_3 u^a}{d |D^s_3 u^a|},\right)d |D^s_3 u^a|\; +
       \nonumber
        \\
        & +\;(\hat W^{\ast \ast})^\infty (c^{a,\infty}- u^a(1^-))
\end{align}

Now we estimate from below \eqref{probl2_plate} using the inequality just stated.

\begin{alignat}{2}
    \liminf_n \int_{\Omega^a} {\hat W}^{**} (\nabla_3 u_n^a(x)) dx = 
    &\liminf_n \left(\int_{\Omega^a_\varepsilon} {\hat W}^{**} (\nabla_3 \tilde{u}_n^a(x)) dx - \int_{\Omega^a_\varepsilon \setminus \Omega^a} {\hat W}^{**} (\nabla_3 \tilde{u}_n^a(x)) dx\right) &\geq
    \nonumber
    \\
    &\liminf_n \int_{\Omega^a_\varepsilon} {\hat W}^{**} (\nabla_3 \tilde{u}_n^a(x)) dx - \limsup_n \int_{\Omega^a_\varepsilon \setminus \Omega^a} {\hat W}^{**} (\nabla_3 \tilde{u}_n^a(x)) dx &\geq
    \nonumber
    \\
    \nonumber
    \\
   \geq &\int_{\Omega^a_\varepsilon}{\hat W}^{**}(
        \nabla_{3}\tilde{u}^a(x_3))dx + \int_{\Omega^a_\varepsilon}({\hat W}^{**})^{\infty}\left(\frac{d D^s_3 \tilde{u}^a}{d |D^s_3 \tilde{u}^a|},\right)d |D^s_3 \tilde{u}^a| &+ 
        \nonumber
        \\
        &(\hat W^{\ast \ast})^\infty (c^{a,\infty}- u^a(1^-)) - \limsup_n \int_{\Omega^a_\varepsilon \setminus \Omega^a} {\hat W}^{**} (\nabla_3 \tilde{u}_n^a(x))  dx &=
    \nonumber
    \\
    \nonumber
    \\
    = &\int_{\Omega^a_\varepsilon}{\hat W}^{**}(
        \nabla_{3}\tilde{u}^a(x_3))dx + \int_{\Omega^a_\varepsilon}({\hat W}^{**})^{\infty}\left(\frac{d D^s_3 \tilde{u}^a}{d |D^s_3 \tilde{u}^a|},\right)d |D^s_3 \tilde{u}^a| &+ 
        \nonumber
        \\
        &(\hat W^{\ast \ast})^\infty (c^{a,\infty}- u^a(1^-)) - \int_{\Omega^a_\varepsilon \setminus \Omega^a} {\hat W}^{**} ({\bf 0}) dx &=
        \label{inequality_convergence2}
    \\
    \nonumber
    \\
    = &\int_{\Omega^a}{\hat W}^{**}(
        \nabla_{3}u^a(x_3))dx + \int_{\Omega^a}({\hat W}^{**})^{\infty}\left(\frac{d D^s_3 u^a}{d |D^s_3 u^a|},\right)d |D^s_3 u^a| &+ 
        \nonumber
        \\
        &(\hat W^{\ast \ast})^\infty (u^a(0^+)) +
        (\hat W^{\ast \ast})^\infty (c^{a,\infty}- u^a(1^-)) 
        \label{last_equality}
\end{alignat}

In \eqref{inequality_convergence2} we exploited Dominated Convergence theorem,  the continuity of $\hat W^{\ast \ast}$ and $\nabla_3\tilde{u}_n^a \to 0$ in $L^1(\Omega^a_\varepsilon \setminus \Omega^a;\RI^3)$, due to \eqref{probl2_plate}, the definition of $\tilde u^a_n$ and $\tilde u^a$, and \eqref{g_n_definition} together with \eqref{gagliardo_ineq}.

In \eqref{last_equality} we exploited that $\mathcal L^2(\omega)=1$ and
\begin{align*}
\int_{\omega\x]-\varepsilon,0]}({\hat W}^{**})^{\infty}\left(\frac{d D^s_3 u^a}{d |D^s_3 u^a|},\right)d |D^s_3 u^a|& = \int_{\omega\x\{0\}}({\hat W}^{**})^{\infty}\left(\frac{d D^s_3 u^a}{d |D^s_3 u^a|},\right)d |D^s_3 u^a|=
\\
&=(\hat W^{\ast \ast})^\infty (u^a(0^+))
\end{align*}

Consequently, by letting $\varepsilon\to 0$, from \eqref{last_equality} it results that 
$$
J(u^a) = \lim_{n\to \infty} \int_{\Omega^a} {\hat W}^{**} (\nabla_3 u_n^a(x)) dx \geq K^{a,\infty}(u^a) + (\hat W^{\ast \ast})^\infty (u^a(0^+)).
$$
\end{proof}

Now we are in position to prove the upper bound.

A construction similar to Proposition \ref{ub1stestimateU} gives rise to the proof of the following result, which is key for the attainment of the upper bound

\begin{Proposition}\label{ub_continuous_joined}

Under the same assumptions of Proposition \ref{prop_lb_infty2}
\begin{align*}
\inf&\left\{\liminf_nF_n^\infty(u_n^a, u_n^b): (u_n^a, u_n^b)\in \mathcal U_n^\infty,  u_n^a \overset{\ast}{\rightharpoonup} u^a \hbox{ in }BV(\Omega^a;\mathbb R^3), \right. 
\\
&\left.u_n^a(\cdot,0) \to {\bf 0} \hbox{ in } L^1(\omega;\RI^3),
u_n^b  \to (x_\alpha,0) \hbox{ in }W^{1,1}(\Omega^b;\mathbb R^3)\right\} \leq
\int_0^1\hat{W}(\nabla u^a(x_3))dx,
\end{align*}
for every $u^a \in c^{a,\infty}x_3+ W^{1,1}_0(]0,1[;\RI^3)$. 
\end{Proposition}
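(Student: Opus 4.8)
The plan is to exhibit an explicit recovery sequence, taking advantage of the fact that the forced rigid limit $u^b\equiv(x_\alpha,0)$ below, together with the hypothesis $u^a\in c^{a,\infty}x_3+W^{1,1}_0(]0,1[;\mathbb R^3)$ (which forces $u^a(0)={\bf 0}$), makes the junction value of $u_n^a$ at $x_3=0$ asymptotically equal to the interior trace $u^a(0^+)={\bf 0}$; this lets us absorb the whole transition into a thin layer inside $\Omega^a$ at no asymptotic cost. For the film below we simply take $u_n^b(x_\alpha,x_3):=(x_\alpha,h_nx_3)$ on $\Omega^b$: this is exactly the boundary datum $f^{b,\infty}+h_ng^{b,\infty}x_3$ of \eqref{dirichletinfty}, it converges to $(x_\alpha,0)$ strongly in $W^{1,1}(\Omega^b;\mathbb R^3)$, and its scaled gradient $\bigl(\nabla_\alpha u_n^b\mid\tfrac1{h_n}\nabla_3u_n^b\bigr)$ equals $Id$, so by \eqref{eq:growth1} the term $\tfrac{h_n}{r_n^2}K_n^{b,\infty}(u_n^b)=\tfrac{h_n}{r_n^2}\int_{\Omega^b}W(Id)\,dx$ vanishes identically. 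As usual (Lemma \ref{lemma29} and Remark \ref{refremweak}) we may and do assume $W$ continuous (Lipschitz).

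For the rod, fix $z^a\in W^{1,1}_0(]0,1[;\mathbb R^{3\times2})$ and a sequence $\varepsilon_n\to0^+$, and set
\[
u_n^a(x_\alpha,x_3):=\begin{cases}(r_nx_\alpha,0)+\dfrac{x_3}{\varepsilon_n}\Bigl[\bigl(z^a(\varepsilon_n)-Id_\alpha\bigr)r_nx_\alpha+u^a(\varepsilon_n)\Bigr],&0<x_3<\varepsilon_n,\\[2mm] z^a(x_3)\,r_nx_\alpha+u^a(x_3),&\varepsilon_n\le x_3<1,\end{cases}
\]
where $(r_nx_\alpha,0)=Id_\alpha(r_nx_\alpha)$ and $Id_\alpha$ is as in \eqref{matrixdef}. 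The two branches agree at $x_3=\varepsilon_n$, so $u_n^a\in W^{1,1}(\Omega^a;\mathbb R^3)$; since $z^a(1)={\bf 0}$ and $u^a(1)=c^{a,\infty}$ we get $u_n^a|_{\omega\times\{1\}}=c^{a,\infty}$, i.e. $u_n^a-c^{a,\infty}\in W^{1,1}_a(\Omega^a;\mathbb R^3)$; and $u_n^a(x_\alpha,0)=(r_nx_\alpha,0)=u_n^b(r_nx_\alpha,0)$, so \eqref{eq:bc} holds and $(u_n^a,u_n^b)\in\mathcal U_n^\infty$. Because $r_n\to0$, $z^a$ is bounded, and the layer $\omega\times]0,\varepsilon_n[$ has vanishing measure, one checks $u_n^a\to u^a$ in $L^1(\Omega^a;\mathbb R^3)$, $\|\nabla_\alpha u_n^a\|_{L^1(\Omega^a)}\to0$, and $\|\nabla_3u_n^a\|_{L^1(\Omega^a)}$ is bounded (on the bulk $\nabla_3u_n^a=r_n\nabla_3z^a(x_3)x_\alpha+\nabla_3u^a(x_3)$; on the layer $|\nabla_3u_n^a|\le\varepsilon_n^{-1}\bigl(Cr_n+|u^a(\varepsilon_n)|\bigr)$). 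Hence $u_n^a\weakstar u^a$ in $BV(\Omega^a;\mathbb R^3)$ and $u_n^a(\cdot,0)=(r_n\cdot,0)\to{\bf 0}$ in $L^1(\omega;\mathbb R^3)$, so $(u_n^a,u_n^b)$ is admissible for the problem \eqref{probl2_infty}.

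For the energy: on $\omega\times[\varepsilon_n,1[$ one has $\bigl(\tfrac1{r_n}\nabla_\alpha u_n^a\mid\nabla_3u_n^a\bigr)=\bigl(z^a(x_3)\mid\nabla_3u^a(x_3)+r_n\nabla_3z^a(x_3)x_\alpha\bigr)$, which converges to $\bigl(z^a(x_3)\mid\nabla_3u^a(x_3)\bigr)$ in $L^1$; by Lipschitz continuity of $W$ and $\mathcal L^2(\omega)=1$, $\int_{\omega\times[\varepsilon_n,1[}W(\cdots)\,dx\to\int_0^1W\bigl(z^a(x_3)\mid\nabla_3u^a(x_3)\bigr)\,dx_3$. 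On the layer, linear growth gives $\int_{\omega\times]0,\varepsilon_n[}W(\cdots)\,dx\le C\varepsilon_n+C\int_{\omega\times]0,\varepsilon_n[}|\nabla_3u_n^a|\,dx\le C\varepsilon_n+C\bigl(Cr_n+|u^a(\varepsilon_n)|\bigr)$, and since $u^a$ is absolutely continuous with $u^a(0)={\bf 0}$ we have $|u^a(\varepsilon_n)|\le\int_0^{\varepsilon_n}|\nabla u^a|\to0$, so the layer contribution tends to $0$. Therefore $\lim_nF_n^\infty(u_n^a,u_n^b)=\int_0^1W\bigl(z^a(x_3)\mid\nabla_3u^a(x_3)\bigr)\,dx_3$ for every $z^a\in W^{1,1}_0(]0,1[;\mathbb R^{3\times2})$. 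Taking the infimum over $z^a$ and arguing as in the final part of the proof of Proposition \ref{ub1stestimateU} — density of $W^{1,1}_0(]0,1[;\mathbb R^{3\times2})$ in $L^1$, dominated convergence, and Aumann's measurable selection lemma applied to $x_3\mapsto\{\xi_\alpha:W(\xi_\alpha\mid\nabla_3u^a(x_3))\le\hat W(\nabla_3u^a(x_3))+\eta\}$ together with the definition \eqref{hatW} of $\hat W$ — one obtains $\inf_{z^a}\int_0^1W\bigl(z^a(x_3)\mid\nabla_3u^a(x_3)\bigr)\,dx_3=\int_0^1\hat W(\nabla_3u^a(x_3))\,dx_3$, and a standard diagonalization then yields the asserted inequality.

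\emph{Main obstacle.} The decisive point is recognizing that the junction constraint $u_n^a(\cdot,0)=(r_n\cdot,0)$ is asymptotically the \emph{zero} datum, which matches $u^a(0^+)={\bf 0}$ for $u^a$ of the stated form; this is precisely what allows the transition layer to be placed inside $\Omega^a$ with vanishing energy, so that no penalization appears in this sub-lemma, whereas for a general $u^a\in BV(]0,1[;\mathbb R^3)$ that same layer must carry the jump from $u^a(0^+)$ to ${\bf 0}$ and necessarily contributes $(\hat W^{\ast\ast})^\infty(u^a(0^+))$ — the term recovered later, by relaxation and density, in the full upper bound \eqref{probl_infty,infty}. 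The only genuinely delicate estimate is the vanishing of the layer energy, which rests on the absolute continuity of $u^a$ at $0$ together with $u^a(0)={\bf 0}$.
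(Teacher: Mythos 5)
Your proposal is correct and follows essentially the same route as the paper: the paper's recovery sequence is exactly yours (its transition layer $[z^a(\varepsilon_n)r_nx_\alpha+u^a(\varepsilon_n)]\tfrac{x_3}{\varepsilon_n}+u^b(r_nx_\alpha)\tfrac{\varepsilon_n-x_3}{\varepsilon_n}$ with $u^b=(x_\alpha,0)$ coincides algebraically with your formula, and the film is likewise taken to be the affine datum $(x_\alpha,h_nx_3)$ with identically zero energy by \eqref{eq:growth1}), after which both arguments infimize over $z^a$ via the selection/density argument of Proposition \ref{ub1stestimateU}. Your write-up merely makes explicit the layer estimate and the role of $u^a(0)={\bf 0}$, which the paper leaves implicit by citing Proposition \ref{ub1stestimateU}.
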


\begin{proof}[Proof]
Considering the following sequences and with the arguments used in Proposition \ref{ub1stestimateU} we get the desired result.

For $z^a\in W^{1,1}(]0,1[;\RI^{3\x2})$,   recalling that, by Lemma \ref{lemma_conv_trace_above}, the limit function of every converging sequence $(u^b_n)_n$ above, is $u^b=(x_\alpha, 0)$,
\begin{align*}
u_n^a(x_\alpha, x_3):=\left\{ \begin{array}{ll} [z^a (\varepsilon_n )r_n x_\alpha
+ u^a(\varepsilon_n)] \frac{x_3}{\varepsilon_n} + u^b(r_n x_\alpha) \frac{\varepsilon_n-x_3}{\varepsilon_n} &\hbox{ if } (x_\alpha, x_3)\in \omega \times ]0,\varepsilon_n[,\\
z^a(x_3) r_n x_\alpha + u^a(x_3) &\hbox{ if }(x_\alpha, x_3) \in \omega \times [\varepsilon_n, 1[,
\end{array}\right.
\end{align*}
\begin{align*}
u^b_n(x_\alpha, x_3):= (x_\alpha,h_nx_3) \hbox{ if } (x_\alpha, x_3) \in \Omega^b. 
\end{align*}
Indeed it is easily seen that $(u_n^a, u^b_n) \in \mathcal U^{\infty}_n$. Furthermore 
\eqref{convubaquote}, \eqref{convubbquote} and \eqref{limenquote} hold, with $K^{a,q}_n$ and $K^{b,q}_n$ replaced by $K^{a,\infty}_n$ and $K^{b,\infty}_n$, respectively, in the last equality.
\end{proof}

Notice that $c^{a,\infty}x_3 + W^{1,1}_0(]0,1[;\RI^3) =  \{ u \in W^{1,1}(]0,1[;\RI^3): \; u({\bf 0})={\bf 0} \hbox{ and } u(1) = c^{a,\infty}\}$. When relaxing to BV and applying \cite[Example 4.1]{BFMTraces} 
we get the subsequent result.

\begin{Proposition}[Upper bound $(\infty, \infty)$]\label{ub_joined}
Under the same assumptions as Proposition \ref{prop_lb_infty2}, 
\begin{align*}
\inf\left\{\liminf_nF_n^\infty(u^a_n,u^b_n): (u_n^a, u_n^b)\in \mathcal U^\infty_n,  u_n^a \overset{\ast}{\rightharpoonup} u^a \hbox{ in }BV(\Omega^a;\mathbb R^3), u_n^a(\cdot,0) \to {\bf 0} \hbox{ in } L^1(\omega;\RI^3),\right. 
\\
\left.
u_n^b  \to (x_\alpha,0) \hbox{ in }W^{1,1}(\Omega^b;\mathbb R^3)\right\} \leq
K^{a,\infty}(u^a) + (\hat W^{\ast \ast})^\infty (u^a(0^+)).
\end{align*}
\end{Proposition}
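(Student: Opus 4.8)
The plan is to obtain the upper bound by combining the Sobolev-level construction of Proposition \ref{ub_continuous_joined} with a density/relaxation argument, exactly mirroring the strategy of Propositions \ref{2ndestimateub} and \ref{ub_infty,0}. First I would recall, from Proposition \ref{ub_continuous_joined}, that for every $u^a\in c^{a,\infty}x_3+W^{1,1}_0(]0,1[;\RI^3)$ we already have
\begin{equation*}
G^\infty(u^a,(x_\alpha,0))\;\leq\;\int_0^1\hat W(\nabla_3 u^a(x_3))\,dx_3,
\end{equation*}
where $G^\infty$ is the functional defined in \eqref{probl2_infty}. Since by Lemma \ref{lemma_conv_trace_above} the relevant class of admissible fields is precisely $\mathcal R^\infty=BV(]0,1[;\RI^3)\times\{(x_\alpha,0)\}$ with the additional trace constraint $u_n^a(\cdot,0)\to{\bf 0}$, the functional to be relaxed at the Sobolev level is
\begin{equation*}
\mathcal F(u^a):=\int_0^1\hat W(\nabla_3 u^a(x_3))\,dx_3,\qquad u^a\in\{v\in W^{1,1}(]0,1[;\RI^3):v(0^+)={\bf 0},\,v(1^-)=c^{a,\infty}\},
\end{equation*}
i.e. a one-dimensional bulk functional with linear growth under a double trace constraint (Dirichlet data ${\bf 0}$ at $0$ and $c^{a,\infty}$ at $1$).

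Next I would invoke the lower semicontinuity of $G^\infty$ with respect to weak* convergence in $BV(]0,1[;\RI^3)$, which is Lemma \ref{preFlsc} (the constraint $u_n^a(\cdot,0)\to{\bf 0}$ in $L^1$ is preserved along the diagonal sequences because $BV$ weak* convergence together with this extra constraint is stable; this is exactly the situation handled in Lemma \ref{lemma_conv_trace_above} and Proposition \ref{prop_lb_infty2}). Consequently $G^\infty$, being weak*-$BV$ lower semicontinuous and bounded above on the Sobolev class by $\mathcal F$, is bounded above by the weak*-$BV$ relaxation of $\mathcal F$. Now I apply \cite[Example 4.1]{BFMTraces} to $\mathcal F$: since $\hat W$ has linear growth and — by \eqref{Whatrec} — admits a strong recession function, the relaxation of $\mathcal F$ in $BV(]0,1[;\RI^3)$ under the trace constraints is
\begin{align*}
\overline{\mathcal F}(u^a)=\int_{]0,1[}\hat W^{\ast\ast}(\nabla_3 u^a)\,dx_3
&+\int_{]0,1[}(\hat W^{\ast\ast})^\infty\!\Big(\tfrac{dD^s_3u^a}{d|D^s_3u^a|}\Big)\,d|D^s_3u^a|\\
&+(\hat W^{\ast\ast})^\infty(c^{a,\infty}-u^a(1^-))+(\hat W^{\ast\ast})^\infty(u^a(0^+)),
\end{align*}
which is precisely $K^{a,\infty}(u^a)+(\hat W^{\ast\ast})^\infty(u^a(0^+))$ with the (null) load convention. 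Chaining these facts gives $G^\infty(u^a,(x_\alpha,0))\le K^{a,\infty}(u^a)+(\hat W^{\ast\ast})^\infty(u^a(0^+))$ for every $u^a\in BV(]0,1[;\RI^3)$, which is the assertion.

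The main obstacle is making the two trace constraints interact correctly in the relaxation step: the endpoint $x_3=1$ carries the Dirichlet datum $c^{a,\infty}$, while the endpoint $x_3=0$ carries the datum ${\bf 0}$ coming from the junction (via Lemma \ref{lemma_conv_trace_above}), so one must check that \cite[Example 4.1]{BFMTraces} genuinely applies to a bilateral trace constraint on the interval and produces the two separate surface terms $(\hat W^{\ast\ast})^\infty(c^{a,\infty}-u^a(1^-))$ and $(\hat W^{\ast\ast})^\infty(u^a(0^+))$; this is where assumption \eqref{Whatrec} is used, to guarantee that the recession function entering the relaxed boundary term is the strong one (so that $(Q\hat W)^\infty=(\hat W^\infty)^{\ast\ast}$ and the boundary integrand is well defined), exactly as in Remark \ref{refremweak}. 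The diagonalization at the very end — passing from approximate recovery sequences for Sobolev $u^a$'s to a genuine recovery sequence for an arbitrary $BV$ field, while keeping admissibility in $\mathcal U_n^\infty$, the convergence $u_n^b\to(x_\alpha,0)$ in $W^{1,1}(\Omega^b;\RI^3)$, and $u_n^a(\cdot,0)\to{\bf 0}$ in $L^1(\omega;\RI^3)$ — is routine and is carried out verbatim as in the proof of Proposition \ref{2ndestimateub}.
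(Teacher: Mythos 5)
Your proposal is correct and follows essentially the same route as the paper: the Sobolev-level bound of Proposition \ref{ub_continuous_joined}, the weak* $BV$ lower semicontinuity of $G^\infty$ from Lemma \ref{preFlsc}, and the relaxation of $\int_0^1\hat W(\nabla_3 u^a)\,dx_3$ on $c^{a,\infty}x_3+W^{1,1}_0(]0,1[;\RI^3)$ via \cite[Example 4.1]{BFMTraces}, which produces exactly the two boundary terms $(\hat W^{\ast\ast})^\infty(c^{a,\infty}-u^a(1^-))$ and $(\hat W^{\ast\ast})^\infty(u^a(0^+))$. Your additional remarks on the bilateral trace constraint and the role of \eqref{Whatrec} are consistent with the paper's observation immediately preceding the statement.
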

\begin{proof}
The proof relies on the same arguments exploited in the proof of  Proposition \ref{ub_infty,0}, namely on the lower semicontinuity of the functional in \eqref{probl2_infty} with respect to the weak * convergence in $BV(\Omega^a;\mathbb R^3)$ (see Lemma \ref{preFlsc}, and the relaxation with respect to the same topology of the functional $F_a: c^{a,\infty}x_3+ W^{1,1}_a(]0,1[;\mathbb R^3)\to \mathbb R^+$ defined as $\int_{\Omega^a} \hat W(\nabla_3 u^a(x_3))dx$). Indeed the latter relaxed functional coincides with $K^{a,\infty}(u^a) + (\hat W^{\ast \ast})^\infty (u^a(0^+))$, in view of \cite[Example 4.1]{BFMTraces}.

\end{proof}

\begin{Remark}
    \label{general_dirichlet2}
    Notice that in this section, either when $h_n/r_n \to \infty $ or $h_n/r_n\to 0$, the arguments regarding the "above" functions $(u_n^a)_n$ in the upper bounds and lower bounds are similar to the first case, $\ell=q\in]0,+\infty[$. Notice also that Remark \ref{general_dirichlet1} applies to the lower bound Propositions \ref{lbinfty0} and \ref{prop_lb_infty2}. For the upper bounds in Propositions \ref{ub_infty,0} and \ref{ub_continuous_joined} we note that if in the construction of the recovery sequences we replace $z^a$ by $z^a+d^{a,\infty}$ with $z^a\in W^{1,1}_0(]0,1[;\RI^{3\x2})$ and a general $d^{a,\infty}\in\RI^{3\x2}$, we achieve the same results.
\end{Remark}

\subsection{Case $l = 0$}

We now consider the case \eqref{load0}. For the same reasons as in Remark \ref{remloads}, we can neglect the term $\frac{r_n^2}{h_n}\int_{\Omega^a} H_n^{a,0}(x) \cdot u^a_n(x)dx + \int_{\Omega^b} H_n^{b,0}(x)\cdot u^b_n(x)dx$ based on \eqref{load0}. We can therefore also assume $H_n^{a,0}=0$ and $H_n^{b,0}=0$. 

\begin{Proposition}\label{Compactness_0}
       Consider the Borel function  \(W : \mathbb{R}^{3\x 3}\to 
    \mathbb{R}\) satisfying \eqref{coerci} and \eqref{eq:growth1}.  
    For  $n \in \mathbb N$ let \(r_n\), \(h_n\) be as in \eqref{hrzero}, with $\ell$ in \eqref{ell} coinciding with $0$. 
    For  $n \in \mathbb N$ let the spaces
    \(\mathcal{U}_n^0\) be introduced in \eqref{Vn}
    where the Dirichlet conditions in $\U_n^0$ are as in \eqref{dirichlet0}.
    Let $F_n^0$  be the energy functional defined in \eqref{eq:Fn0},
 and the loads $H_n^{a,0},H_n^{b,0}$ satisfy  \eqref{load0}.

    Then, for every  
        $( u_n^a, u_n^b) \in \mathcal{U}_n^0$  such that \(\sup_{n\in\N} |F_n^0(  u_n^a,  u_n^b)|
    <+\infty\), there exist an increasing sequence of positive integer
        numbers $(n_i)_{i \in \mathbb N}$,
        ${u}^b
        \in BV(\omega;\RI^3)$,  
        depending possibly on
        the selected subsequence $(n_i)_{i \in \mathbb N}$, such that
        \begin{equation*}\left\{
                \begin{array}{l}
                           u^a_{n_i} \to (0_\alpha,x_3) \hbox{ in
                        }W^{1,1}(\Omega^b;\RR^3),\\\\
                        u^b_{n_i}\overset{\ast}{\rightharpoonup}  {u}^b  \hbox{ in
                        }BV(\Omega^a;\RR^3),
                        \end{array}\right.\end{equation*}\medskip
    
         \begin{equation*}
                        \displaystyle{\frac{1}{r_{n_i}}}\nabla_{\alpha}  u^a_{n_i}
                        \to  
                        Id_\alpha
                        \hbox{  in
                        } L^1(\Omega^b;\mathbb R^{3\x2}).
        \end{equation*}\medskip
with $Id_\alpha\in\RI^{3\x2}$ as in \eqref{matrixdef}
\end{Proposition}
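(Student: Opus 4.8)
The plan is to mimic the compactness argument already carried out in Proposition~\ref{Compactness_infty}, but with the roles of the top and bottom cylinders interchanged, since the scaling in \eqref{eq:Fn0} places the coercive prefactor $\frac{r_n^2}{h_n}$ in front of $K_n^{a,0}$. First I would invoke \eqref{coerci} together with the finiteness assumption $\sup_n|F_n^0(u_n^a,u_n^b)|<+\infty$ to obtain the a priori bounds
\begin{equation*}
\frac{r_n^2}{h_n}\int_{\Omega^a}\Big(\tfrac1{r_n}|\nabla_\alpha u_n^a|+|\nabla_3 u_n^a|\Big)\,dx+\int_{\Omega^b}\Big(|\nabla_\alpha u_n^b|+\tfrac1{h_n}|\nabla_3 u_n^b|\Big)\,dx\le C.
\end{equation*}
From the second summand, Poincar\'e's inequality on $\Omega^b$ (using the boundary datum $f^{b,0}\in W^{1,1}(\omega;\RI^3)$ from \eqref{dirichlet0}) gives that $(u_n^b)_n$ is bounded in $W^{1,1}(\Omega^b;\RI^3)$ with $\|\nabla_3 u_n^b\|_{L^1}\le Ch_n\to0$; hence, after extracting a subsequence $(n_i)$, Proposition~\ref{compBV} yields $u_{n_i}^b\weakstar u^b$ in $BV(\Omega^b;\RI^3)$ with $D_3 u^b=\mathbf0$, so that $u^b$ may be identified with an element of $BV(\omega;\RI^3)$, as claimed.

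Next I would treat the $a$-part. The first summand of the bound forces $\frac{r_n}{h_n}\|\nabla_\alpha u_n^a\|_{L^1(\Omega^a)}\le C$ and $\frac{r_n^2}{h_n}\|\nabla_3 u_n^a\|_{L^1(\Omega^a)}\le C$; but to get the \emph{strong} convergence stated I must use the nondegeneracy \eqref{eq:growth1}. Exactly as in the passage leading to \eqref{growth_alpha_infty}--\eqref{growth_3_infty}, I write
\begin{equation*}
\frac{r_n^2}{h_n}\int_{\Omega^a}\Big|\Big(\tfrac1{r_n}\nabla_\alpha u_n^a\ \big|\ \nabla_3 u_n^a\Big)-Id\Big|\,dx\le\frac{r_n^2}{h_n}K_n^{a,0}(u_n^a)\le C,
\end{equation*}
whence $\int_{\Omega^a}|\tfrac1{r_n}\nabla_\alpha u_n^a-Id_\alpha|\,dx\le C\frac{h_n}{r_n^2}\to0$ and $\int_{\Omega^a}|\nabla_3 u_n^a-\mathbf0|\,dx\le C\frac{h_n}{r_n^2}\to0$, so $\nabla u_n^a\to(Id_\alpha\,|\,\mathbf0)$ in $L^1(\Omega^a;\RI^{3\times3})$ and in particular $\frac1{r_n}\nabla_\alpha u_n^a\to Id_\alpha$ in $L^1$. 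Combining $\nabla_3 u_n^a\to\mathbf0$ with $\nabla_\alpha u_n^a\to r_n Id_\alpha\to\mathbf0$, the sequence $(u_n^a)_n$ is bounded in $W^{1,1}(\Omega^a;\RI^3)$; passing to a further subsequence it converges strongly in $L^1$, and since $\nabla u_n^a\to(Id_\alpha\,|\,\mathbf0)=\nabla(0_\alpha,x_3)$ in $L^1$ the limit is of the form $(0_\alpha,x_3)+y_0$ for some constant $y_0\in\RI^3$.

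Finally I would pin down $y_0=\mathbf0$ using the Dirichlet condition in \eqref{dirichlet0}: by \eqref{Vn} with $c^{a,0}=(0_\alpha,1)$, $d^{a,0}=Id_\alpha$, one has $u_n^a|_{\omega\times\{1\}}=(0_\alpha,1)+r_n Id_\alpha x_\alpha\to(0_\alpha,1)$ in $L^1(\omega;\RI^3)$; since the traces at $x_3=1$ converge in $L^1$ to those of the strong $W^{1,1}$-limit, we get $(0_\alpha,1)+y_0=(0_\alpha,1)$, i.e. $y_0=\mathbf0$. Therefore $u_{n_i}^a\to(0_\alpha,x_3)$ in $W^{1,1}(\Omega^a;\RI^3)$ and $\frac1{r_{n_i}}\nabla_\alpha u_{n_i}^a\to Id_\alpha$ in $L^1(\Omega^a;\RI^{3\times2})$, completing the proof. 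The only genuinely delicate point is the use of \eqref{eq:growth1}: without it the prefactor $\frac{r_n^2}{h_n}$ only controls the $L^1$ norm of the scaled gradient and not its distance to $Id$, so one could not upgrade to strong convergence nor identify the rigid limit $(0_\alpha,x_3)$; everything else is a routine adaptation of Propositions~\ref{Compactnessres1} and~\ref{Compactness_infty}.
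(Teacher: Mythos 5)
Your strategy is exactly the paper's: the paper omits this proof outright, declaring it identical to that of Proposition \ref{Compactness_infty} with the roles of the two cylinders exchanged, and that is precisely the adaptation you carry out (coercivity plus Poincar\'e and Proposition \ref{compBV} for the $b$-part, the nondegeneracy \eqref{eq:growth1} together with the prefactor $\tfrac{r_n^2}{h_n}\to\infty$ for the $a$-part, and the Dirichlet datum to kill the additive constant).

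There is, however, a concrete slip in the middle of your $a$-part which you should repair, even though your final conclusions are the correct ones. The third column of $Id$ is $(0_\alpha,1)$, not $\mathbf{0}$, so from $\tfrac{r_n^2}{h_n}\int_{\Omega^a}\bigl|\bigl(\tfrac1{r_n}\nabla_\alpha u_n^a\,\big|\,\nabla_3 u_n^a\bigr)-Id\bigr|\,dx\le C$ you obtain $\int_{\Omega^a}\lvert\nabla_3 u_n^a-(0_\alpha,1)\rvert\,dx\le C\tfrac{h_n}{r_n^2}\to0$, i.e. $\nabla_3 u_n^a\to(0_\alpha,1)$ in $L^1$, not $\nabla_3 u_n^a\to\mathbf{0}$. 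Consequently $\nabla u_n^a\to\bigl((\mathbf{0},\mathbf{0})\,\big|\,(0_\alpha,1)\bigr)=\nabla(0_\alpha,x_3)$ in $L^1(\Omega^a;\RI^{3\times3})$; your displayed claim $\nabla u_n^a\to(Id_\alpha\,|\,\mathbf{0})$ is both inconsistent with your own observation that $\nabla_\alpha u_n^a=r_n\cdot\tfrac1{r_n}\nabla_\alpha u_n^a\to\mathbf{0}$ and not equal to $\nabla(0_\alpha,x_3)$ (it is $\nabla(x_\alpha,0)$, the limit relevant to the $b$-cylinder in Proposition \ref{Compactness_infty}, which is presumably where the confusion originates). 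With these two displays corrected, the identification of the limit as $(0_\alpha,x_3)+y_0$ and the trace argument at $\omega\times\{1\}$ forcing $y_0=\mathbf{0}$ go through exactly as you wrote them.
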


\begin{proof}
    The proof is omitted since it is identical to the one of Proposition \ref{Compactness_infty}.
\end{proof}

In view of Proposition \ref{Compactness_0}, for $u^b \in BV(\omega; \mathbb R^3)$, 
the left hand side of \eqref{prob_0} can be written as  
\begin{align*}
G^0((0_\alpha,x_3),u^b) = \inf&\left\{\liminf_nF_n^0(u_n^a, u_n^b): (u_n^a, u_n^b)\in \mathcal U_n^0,  \right. \\
&\left.u_n^a  \to (0_\alpha,x_3) \hbox{ in }W^{1,1}(\Omega^a;\mathbb R^3), 
u_n^b \overset{\ast}{\rightharpoonup} u^b \hbox{ in }BV(\Omega^b;\mathbb R^3) \right\}.
\end{align*}
where $G^0$ is defined in \eqref{normal_inf}.

\begin{Proposition}[Lower bound, $0$]
    \label{lbound_0}
    Consider the Borel function  \(W : \mathbb{R}^{3\x 3}\to 
    \mathbb{R}\) satisfying \eqref{coerci} and \eqref{eq:growth1}.  
    Let  \(r_n\), \(h_n\) and the loads $H_n^{a,0},H_n^{b,0}$ satisfy conditions \eqref{hrzero}, \eqref{ell}, with $\ell=0$, and \eqref{load0} respectively.

    For every $n \in \mathbb N$, let the spaces
    \(\mathcal{U}_n^0\) be introduced in \eqref{Vn}
    where the Dirichlet conditions in $\U_n^0$ are as in \eqref{dirichlet0}.
    Let $F_n^0$  be the energy functional defined in \eqref{eq:Fn0}. 
    Assume also that the recession function of $W_0$ \eqref{W0}, defined as in \eqref{S101rec}, satisfies \eqref{W0rec}.

    Then, 
    \begin{align}\label{problplate0}
        \inf&\left\{\liminf_nF_n^0(u_n^a, u_n^b): (u_n^a, u_n^b)\in \mathcal U_n^0, u_n^a  \to (0_\alpha,x_3) \hbox{ in }W^{1,1}(\Omega^a;\mathbb R^3), 
        u_n^b \overset{\ast}{\rightharpoonup} u^b \hbox{ in }BV(\Omega^b;\mathbb R^3) \right\} \geq K^{b,0}(u^b),
    \end{align}
    for every $u^b\in BV(\Omega^b;\RI^3)$,
    where $K^{b,0}$ is defined as in \eqref{def:Kb}.
\end{Proposition}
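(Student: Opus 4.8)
The plan is to discard the (nonnegative) contribution of the rod $\Omega^a$ and then reduce to the trace-constrained lower semicontinuity already exploited in the proof of Proposition~\ref{lb}. As in Remark~\ref{remloads} we may take $H_n^{a,0}=H_n^{b,0}={\bf 0}$. First I would observe that hypothesis \eqref{eq:growth1} forces $W\geq 0$ on $\mathbb R^{3\times 3}$, hence $K_n^{a,0}(u_n^a)\geq 0$ and
\begin{equation*}
F_n^0(u_n^a,u_n^b)=\frac{r_n^2}{h_n}K_n^{a,0}(u_n^a)+K_n^{b,0}(u_n^b)\ \geq\ K_n^{b,0}(u_n^b).
\end{equation*}
Thus it suffices to prove $\liminf_n K_n^{b,0}(u_n^b)\geq K^{b,0}(u^b)$ for any $(u_n^a,u_n^b)\in\mathcal U_n^0$ with $u_n^b\weakstar u^b$ in $BV(\Omega^b;\mathbb R^3)$; by Proposition~\ref{Compactness_0} the limit $u^b$ is $x_3$-independent, i.e. $u^b\in BV(\omega;\mathbb R^3)$, so that $K^{b,0}(u^b)$ in \eqref{def:Kb} is meaningful.

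Next I would minorize the integrand pointwise by freezing out the vertical variable: for a.e. $x\in\Omega^b$,
\begin{equation*}
W\Bigl(\nabla_\alpha u_n^b(x)\ \Big|\ \tfrac{1}{h_n}\nabla_3 u_n^b(x)\Bigr)\ \geq\ W_0\bigl(\nabla_\alpha u_n^b(x)\bigr)\ \geq\ QW_0\bigl(\nabla_\alpha u_n^b(x)\bigr),
\end{equation*}
by the definition \eqref{W0} of $W_0$ and since $QW_0\leq W_0$. Hence $K_n^{b,0}(u_n^b)\geq \int_{\Omega^b}QW_0(\nabla_\alpha u_n^b)\,dx$, and everything reduces to passing to the $\liminf$ in this last functional under weak$^*$ $BV$ convergence while keeping the trace constraint.

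To this end I would recall that $QW_0$ is quasiconvex with linear growth, hence Lipschitz (see \cite{DA}), and that by \eqref{W0rec} together with \cite[Proposition~2.6]{BFMTraces} it admits a strong recession function with $(QW_0)^\infty=Q(W_0^\infty)$. Since \eqref{dirichlet0} prescribes $g^{b,0}={\bf 0}$, one has $u_n^b|_{\partial\omega\times]-1,0[}=f^{b,0}$ for every $n$. Applying \cite[Example~4.1]{BFMTraces} to the functional $u\mapsto \int_{\Omega^b}QW_0(\nabla_\alpha u)\,dx+\|\nabla_3 u\|_{L^1(\Omega^b)}$ (the auxiliary term merely restores coercivity in all the entries of the gradient; note that $\|\nabla_3 u_n^b\|_{L^1(\Omega^b)}\leq C h_n\to 0$ along the sequence, by \eqref{coerci} and the energy bound, so it may be subtracted off), and using $u_n^b\weakstar u^b$ in $BV(\Omega^b;\mathbb R^3)$ together with $\mathcal L^2(\omega)=1$ to collapse the $x_3$-independent limit integrals onto $\omega$, we obtain
\begin{align*}
\liminf_{n\to\infty}\int_{\Omega^b}QW_0(\nabla_\alpha u_n^b)\,dx\ \geq\ &\int_{\omega}QW_0\bigl(\nabla_\alpha u^b(x_\alpha)\bigr)\,dx_\alpha+\int_{\omega}(QW_0)^\infty\Bigl(\tfrac{D^s_\alpha u^b}{|D^s_\alpha u^b|}\Bigr)\,d|D^s_\alpha u^b|\\
&+\int_{\partial\omega\times]-1,0[}(QW_0)^\infty\bigl((f^{b,0}-(u^b)^-)\otimes\nu_{\partial\omega\times]-1,0[}\bigr)\,d\mathcal H^2,
\end{align*}
which is exactly $K^{b,0}(u^b)$ (loads absent), proving \eqref{problplate0}.

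The only genuinely delicate step is the last one, i.e. the correct appearance of the lateral boundary term $\int_{\partial\omega\times]-1,0[}(QW_0)^\infty((f^{b,0}-(u^b)^-)\otimes\nu)\,d\mathcal H^2$: this is precisely where the trace-relaxation result \cite[Example~4.1]{BFMTraces}, and hence the hypothesis \eqref{W0rec} guaranteeing the strong recession function, are needed. The other two ingredients — discarding the nonnegative rod energy via \eqref{eq:growth1} and the pointwise bound by $QW_0$ — are elementary, and one should only be mildly careful that Proposition~\ref{Compactness_0} is what legitimizes identifying $u^b$ with a function of $x_\alpha$ alone.
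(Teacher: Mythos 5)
Your proposal is correct and follows essentially the same route as the paper: discard the (nonnegative, by \eqref{eq:growth1}) rod contribution so that $F_n^0\geq K_n^{b,0}(u_n^b)$, reduce to the plate problem with the lateral trace constraint, and then repeat the argument of Proposition \ref{lb} (the pointwise minorization $W\geq W_0\geq QW_0$ followed by the trace-constrained relaxation of \cite[Example 4.1]{BFMTraces}, for which \eqref{W0rec} supplies the strong recession function). The paper states this more tersely ("by the same token as in Proposition \ref{lb}"), whereas you spell out the details, including the harmless auxiliary term $\|\nabla_3 u\|_{L^1(\Omega^b)}$ and the identification of $u^b$ with an element of $BV(\omega;\mathbb R^3)$; these are exactly the steps implicit in the paper's proof.
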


\begin{proof}
The relaxed energy \eqref{problplate0} has a clear lower bound 

\begin{align}\label{probl_plate}
\inf\left\{\liminf_n
K^{b,0}_n(  u^b_n): u_n^b \in ( W_b^{1,1}(\Omega^b;\RI^3) + f^{b,0}),\; 
u_n^b \overset{\ast}{\rightharpoonup} u^b \hbox{ in }BV(\Omega^b;\mathbb R^3) \right\}.
\end{align}
where $f^{b,0}$ is as in \eqref{dirichlet0}, and extended as a function of $\Omega^b$, constantly in the $x_3$ direction. 
By the same token as in Proposition \ref{lb}, $K^{b,0}(u^b)$ is a lower bound for \eqref{probl_plate} which implies that it is a lower bound also for \eqref{prob_0}.

\end{proof}

\begin{Proposition}[Upper bound $0$]\label{ub_0_continuous}
Under the same assumptions of Proposition \ref{lbound_0}, 
\begin{align*}
\inf&\left\{\liminf_nF_n^0(u_n^a, u_n^b): (u_n^a, u_n^b)\in \mathcal U_n, u_n^a  \to (0_\alpha,x_3) \hbox{ in }W^{1,1}(\Omega^a;\mathbb R^3),  \right. \nonumber\\
        &\left.
        u_n^b \overset{\ast}{\rightharpoonup} u^b \hbox{ in }BV(\Omega^b;\mathbb R^3) \right\} \leq \int_{\Omega^b} W_0(\nabla_\alpha u^b(x_\alpha)) dx,
        \end{align*}
        for every $u^b \in f^{b,0}+  C^\infty_0(\omega;\RI^3)  $.
\end{Proposition}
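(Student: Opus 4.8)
The plan is to follow the scheme of Propositions \ref{ub1stestimateU} and \ref{ub_continuous_joined}: build an explicit recovery sequence, exploiting that for $\ell=0$ the rigid limit $(0_\alpha,x_3)$ on the rod is \emph{exactly} attained at every level $n$. By Lemma \ref{lemma29} we may take $W$ continuous (hence, being quasiconvex with linear growth, Lipschitz, see \cite{DA}), and by Remark \ref{remloads} the loads are dropped; recall that $\ell=0$ means $r_n^2/h_n\to+\infty$.

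First I would fix $z^b\in C^\infty_0(\omega;\RI^3)$ and set $u_n^a(x_\alpha,x_3):=(r_nx_\alpha,x_3)$ on $\Omega^a$, so that $u_n^a|_{\omega\x\{1\}}=(0_\alpha,1)+r_nId_\alpha x_\alpha=c^{a,0}+r_nd^{a,0}x_\alpha$, $\bigl(\tfrac1{r_n}\nabla_\alpha u_n^a\mid\nabla_3u_n^a\bigr)\equiv Id$, whence $K_n^{a,0}(u_n^a)=\int_{\Omega^a}W(Id)\,dx=0$ by \eqref{eq:growth1} and $u_n^a\to(0_\alpha,x_3)$ in $W^{1,1}(\Omega^a;\RI^3)$. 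On the disk I would pinch $u^b$ near the origin: put $\rho_n:=(\sup_\omega|x_\alpha|)r_n$, $R_n:=2\rho_n$, and let $\varphi_n\in C^\infty_c(\RI^2)$ satisfy $\varphi_n\equiv1$ on $\overline B(0_\alpha,\rho_n)$, $\varphi_n\equiv0$ off $B(0_\alpha,R_n)$, $0\le\varphi_n\le1$, $|\nabla\varphi_n|\le C/(R_n-\rho_n)$; then set
\[
u_n^b(x_\alpha,x_3):=\varphi_n(x_\alpha)(x_\alpha,0)+(1-\varphi_n(x_\alpha))u^b(x_\alpha)+h_nx_3z^b(x_\alpha).
\]
Since $r_n\omega\subset\overline B(0_\alpha,\rho_n)$ we get $u_n^b(r_nx_\alpha,0)=(r_nx_\alpha,0)=u_n^a(x_\alpha,0)$, i.e.\ \eqref{eq:bc} holds; for $n$ large $B(0_\alpha,R_n)\subset\subset\omega$, so $\varphi_n\equiv0$ near $\partial\omega$ and, using $u^b=f^{b,0}$ on $\partial\omega$ and $z^b\in C^\infty_0$, one has $u_n^b|_{\partial\omega\x]-1,0[}=f^{b,0}$; hence $(u_n^a,u_n^b)\in\mathcal U_n^0$.

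Since $F_n^0(u_n^a,u_n^b)=\tfrac{r_n^2}{h_n}K_n^{a,0}(u_n^a)+K_n^{b,0}(u_n^b)=K_n^{b,0}(u_n^b)$, it remains to estimate $K_n^{b,0}(u_n^b)$, which I would split over the cylinders on $B(0_\alpha,\rho_n)$, on the annulus $B(0_\alpha,R_n)\setminus\overline B(0_\alpha,\rho_n)$, and on $\omega\setminus\overline B(0_\alpha,R_n)$. On the inner cylinder $\nabla_\alpha u_n^b=Id_\alpha+h_nx_3\nabla_\alpha z^b$ and \eqref{coerci} bounds the contribution by $C\rho_n^2\to0$. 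On the annulus $\nabla_\alpha u_n^b$ contains the term $\bigl((x_\alpha,0)-u^b(x_\alpha)\bigr)\otimes\nabla\varphi_n$ of size $O\!\bigl(1/(R_n-\rho_n)\bigr)$ — here $(x_\alpha,0)-u^b$ is bounded near $0_\alpha$ since $f^{b,0}=0$ there by \eqref{f=0=g} — while the annulus has measure $O(R_n^2)$, so by \eqref{coerci} this contribution is $O(R_n)\to0$; this balance of a blowing-up gradient against a shrinking measure is the only delicate point. On the outer region $\nabla_\alpha u_n^b=\nabla_\alpha u^b+h_nx_3\nabla_\alpha z^b$ and $\tfrac1{h_n}\nabla_3u_n^b=z^b$, so by continuity of $W$ and dominated convergence (majorant $C(1+|\nabla_\alpha u^b|+|\nabla_\alpha z^b|+|z^b|)\in L^1$) the contribution tends to $\int_\omega W\bigl(\nabla_\alpha u^b(x_\alpha)\mid z^b(x_\alpha)\bigr)dx_\alpha$. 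Also $u_n^b\to u^b$ in $L^1(\Omega^b;\RI^3)$ with $\sup_n|Du_n^b|(\Omega^b)<+\infty$, hence $u_n^b\weakstar u^b$ in $BV(\Omega^b;\RI^3)$. Thus for every $z^b\in C^\infty_0(\omega;\RI^3)$ the infimum in the statement is $\le\int_\omega W(\nabla_\alpha u^b\mid z^b)\,dx_\alpha$.

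Finally I would pass to the infimum over $z^b$: given $\delta>0$, Aumann's measurable selection lemma provides a measurable $\zeta$ with $W(\nabla_\alpha u^b\mid\zeta)\le W_0(\nabla_\alpha u^b)+\delta$ a.e., which by \eqref{coerci} lies in $L^1(\omega;\RI^3)$; approximating $\zeta$ in $L^1$ by $z^b_k\in C^\infty_0(\omega;\RI^3)$ and using that $W$ is Lipschitz in its last column gives $\int_\omega W(\nabla_\alpha u^b\mid z^b_k)\,dx_\alpha\to\int_\omega W(\nabla_\alpha u^b\mid\zeta)\,dx_\alpha\le\int_\omega W_0(\nabla_\alpha u^b)\,dx_\alpha+\delta$. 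Letting $\delta\to0$ and noting $\int_{\Omega^b}W_0(\nabla_\alpha u^b(x_\alpha))\,dx=\int_\omega W_0(\nabla_\alpha u^b(x_\alpha))\,dx_\alpha$ concludes. The main obstacle throughout is the junction condition \eqref{eq:bc}, which forces the pinching of $u_n^b$ and the attendant thin-annulus estimate.
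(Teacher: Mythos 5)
Your proposal is correct and follows essentially the same route as the paper: the same rigid sequence $u_n^a=(r_nx_\alpha,x_3)$ with vanishing rescaled energy, the same cut-off pinching of $u^b$ to $(x_\alpha,\cdot)$ on an $O(r_n)$-neighbourhood of the origin with the annulus contribution controlled by the balance $O(1/r_n)\cdot O(r_n^2)$, and the same final infimization over $z^b$ via measurable selection in the spirit of \cite[Proposition~7]{LeDR1}. The only differences (using $(x_\alpha,0)$ instead of $(x_\alpha,-h_nx_3)$ in the pinched region, and not multiplying the $h_nx_3z^b$ term by $(1-\varphi_n)$) are cosmetic and do not affect the argument.
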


\begin{proof}
    The construction of the recovery sequence is similar to the construction in Proposition \ref{ub_sobolev_infty}.

     For $z^b\in W^{1,1}_0(\omega;\RI^{3})$ consider the following sequences:

\begin{alignat*}{3}
    &u_n^a(x_\alpha,x_3) &&= (r_nx_\alpha,x_3), &&(x_\alpha,x_3)\in \Omega^a,\\
    &u_n^b(x_\alpha,x_3) &&= (u^b+z^bh_nx_3)\cdot(1-\phi_n(x_\alpha)) \; + \; u_n^a\left(\frac{1}{r_n}x_\alpha, -h_nx_3\right)\cdot\phi_n(x_\alpha),  &&
    \\
    & &&=  (u^b+z^bh_nx_3)\cdot(1-\phi_n(x_\alpha)) \; + \;  (x_\alpha,-h_nx_3)\phi_n(x_\alpha) &&(x_\alpha,x_3)\in\Omega^b, 
\end{alignat*}
where, for every $n$, $\phi_n$ is the cut-off function defined in \eqref{cutoff} satisfying:

\begin{align*}
&\phi_n(x_\alpha):= \left\{\begin{array}{ll}
        1 &\hbox{ if } x_\alpha\in r_n\omega,\\
        0 &\hbox{ if } x_\alpha \notin (1+\beta)r_n\omega, 
        \end{array}\right. \\ 
&|\nabla\phi_n(x_\alpha)| \leq \frac{C}{\beta r_n} \quad \hbox{and} \quad 0 \leq \phi_n(x_\alpha) \leq 1.
\end{align*}
Here $\beta > 0$ is a fixed arbitrary value.
It results that
\begin{align*}
    \nabla_\alpha u_n^b(x_\alpha, x_3)  =& \; [\nabla_\alpha u^b(x_\alpha) + \nabla_\alpha z^b(x_\alpha)h_nx_3](1-\phi_n(x_\alpha)), 
    \\
    &+ \nabla_\alpha\phi_n(x_\alpha) \otimes \left[ (x_\alpha,-h_nx_3) - (u^b(x_\alpha)+z^b(x_\alpha)h_nx_3 \right]  + \phi_n(x_\alpha)Id_\alpha,
\\
    \nabla_3 u_n^b(x_\alpha, x_3) =& h_n\left(z^b(x_\alpha)-\left(Id_3+ z^b(x_\alpha)\right)\phi_n(x_\alpha)\right).
\end{align*}
with $Id_\alpha\in\RI^{3\x2}$ as in \eqref{matrixdef}. 

Hence,
$$
\frac{1}{h_n}\nabla_3u_n^b \to z^b
\hbox{ in } L^1(\Omega^b;\RI^3).
$$

Defining, as above, $B_n:=(1+\beta)r_n\omega\times]-1,0[$ and $A_n:=r_n\omega\times]-1,0[$, 
\begin{align}
    &\int_{\Omega^b} | \nabla_\alpha u_n^b - \nabla_\alpha u^b| dx \leq
    \nonumber
    \\
    \nonumber
    &\int_{\Omega^b\setminus A_n} |\nabla_\alpha z^bh_nx_3|dx + \frac{C}{r_n}\int_{B_n\setminus A_n}\left[|(x_\alpha,-h_nx_3)| + |u^b|+|z^bh_nx_3|\right]dx \; + 
    \\
    +&\int_{B_n}\left|\phi_n(x_\alpha)Id_\alpha\right| + |\nabla_\alpha u^b|dx
\label{inequality_convergence_gradient0}
\end{align}

Because $|B_n| = O(r_n^2)$ and for $n\in \N$ large enough $u_b$ is bounded in $L^\infty(B_n;\mathbb R^3)$ (since $f^{b,0}$ satisfies \eqref{f=0=g} and $u^b \in f^{b,0}+ C^\infty_0(\omega;\RI^3) $). then we have that \eqref{inequality_convergence_gradient0} tends to $0$ as $n\to\infty$, i.e.

$$
\nabla_\alpha u_n^b \to \nabla_\alpha u^b \hbox{ in } L^1(\Omega^b;\RI^3).
$$

By the continuity of $W$, the dominated convergence theorem and $W(Id)=0$ we get:

\begin{align*}
    \lim_{n\to \infty}\left(\frac{r_n^2}{h_n}K^{a,0}_n(  u^a_n)+
K^{b,0}_n(  u^b_n)\right) =& \lim_{n\to \infty} \int_{\Omega^b} W\left(\nabla_\alpha u_n^b(x) \;\; | \;\; \frac{1}{h_n}\nabla_3 u_n^b(x)\right) dx= 
\\
=&\int_{\Omega^b} W(\nabla_\alpha u^b(x_\alpha) \;\; | \;\; z^b(x_\alpha)
)dx
\end{align*}
Once again by the arguments in \cite[Proposition~7]{LeDR1} we get the desired result by infimizing in $z^b\in W^{1,1}_0(\omega;\RI^{3})$.

\end{proof}

\begin{Proposition}[Upper bound $0$]
    Under the same assumptions as Proposition \ref{lbound_0} we have 

    \begin{align*}
        \inf&\left\{\liminf_nF_n^0(u_n^a, u_n^b): (u_n^a, u_n^b)\in \mathcal U_n^0, u_n^a  \to (0_\alpha,x_3) \hbox{ in }W^{1,1}(\Omega^a;\mathbb R^3),  \right. \nonumber\\
        &\left.
        u_n^b \overset{\ast}{\rightharpoonup} u^b \hbox{ in }BV(\Omega^b;\mathbb R^3) \right\} 
        \leq
        K^{b,0}(u^b),
\end{align*}
for every $u^b\in BV(\omega;\RI^3)$,
where $K^{b,0}$ is defined in \eqref{def:Kb}. 
\end{Proposition}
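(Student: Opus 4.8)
The plan is to follow the scheme used for the other upper bounds, i.e. Propositions \ref{2ndestimateub}, \ref{ub_infty,0} and \ref{ub_joined}: combine the Sobolev-level estimate of Proposition \ref{ub_0_continuous} with the lower semicontinuity of the $\Gamma$-limit candidate and with the relaxation theorem of \cite[Example~4.1]{BFMTraces}. By Remark \ref{remloads} the load term is a continuous perturbation, so $H$ may be taken to be $0$; and by Lemma \ref{lemma29} there is no loss of generality in assuming $W$ quasiconvex and globally Lipschitz with linear growth, whence $W_0$ in \eqref{W0} is itself Lipschitz with linear growth and, in particular, the functional $v\mapsto\int_{\O^b}W_0(\nabla_\alpha v)\,dx$ is continuous with respect to the strong topology of $W^{1,1}$.

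The first step is to upgrade Proposition \ref{ub_0_continuous} from $f^{b,0}+C^\infty_0(\o;\RI^3)$ to the full trace-constrained space $f^{b,0}+W^{1,1}_0(\o;\RI^3)$. Given $u^b$ in the latter, choose $w_j\in f^{b,0}+C^\infty_0(\o;\RI^3)$ with $w_j\to u^b$ strongly in $W^{1,1}$; then $w_j\weakstar u^b$ in $BV(\o;\RI^3)$, and the weak$^\ast$-$BV$ lower semicontinuity of $u^b\mapsto G^0((0_\alpha,x_3),u^b)$ — which is Lemma \ref{preFlsc}, extended by $+\infty$ outside $\mathcal R^0$ (identified through Proposition \ref{Compactness_0}) — together with Proposition \ref{ub_0_continuous} and the $W^{1,1}$-continuity recorded above yields
\begin{align*}
G^0((0_\alpha,x_3),u^b)&\le\liminf_j G^0((0_\alpha,x_3),w_j)\le\liminf_j\int_{\O^b}W_0(\nabla_\alpha w_j)\,dx\\
&=\int_{\O^b}W_0(\nabla_\alpha u^b)\,dx .
\end{align*}

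The second step is the relaxation. Let $\Psi\colon BV(\o;\RI^3)\to[0,+\infty]$ be the functional equal to $\int_{\O^b}W_0(\nabla_\alpha u^b)\,dx$ on $f^{b,0}+W^{1,1}_0(\o;\RI^3)$ and to $+\infty$ elsewhere. By the first step, $u^b\mapsto G^0((0_\alpha,x_3),u^b)$ is weak$^\ast$-$BV$ lower semicontinuous and dominated by $\Psi$ on the domain of $\Psi$, hence it is also dominated by the weak$^\ast$-$BV$ lower semicontinuous envelope $\overline\Psi$ of $\Psi$. It remains to identify $\overline\Psi$ with $K^{b,0}$: this is precisely \cite[Example~4.1]{BFMTraces}, applied with boundary datum $f^{b,0}$ on $\partial\o\times]-1,0[$ and with the identification of $x_3$-independent fields in $BV(\O^b;\RI^3)$ with fields in $BV(\o;\RI^3)$ (so that $\partial\o\times]-1,0[$ plays the role of $\partial\o$ and produces the surface penalization term), the hypothesis \eqref{W0rec} together with \cite[Proposition~2.6]{BFMTraces} guaranteeing that $QW_0$ admits the strong recession function $(QW_0)^\infty=Q(W_0^\infty)$. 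Restoring the load contribution, $\overline\Psi=K^{b,0}$ as in \eqref{def:Kb}, whence $G^0((0_\alpha,x_3),u^b)\le K^{b,0}(u^b)$ for every $u^b\in BV(\o;\RI^3)$, which is the assertion.

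The argument is essentially routine once the pieces are in place; the only delicate point — and the main obstacle — is the correct invocation of \cite[Example~4.1]{BFMTraces} via the dimensional identification just described, so that the boundary integral over $\partial\o\times]-1,0[$ in \eqref{def:Kb} is reproduced, together with the bookkeeping that the standing hypotheses \eqref{coerci}, \eqref{eq:growth1} and \eqref{W0rec} inherited from Proposition \ref{lbound_0} are exactly those required by the relaxation theorem.
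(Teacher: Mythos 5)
Your proposal is correct and follows essentially the same route as the paper: the paper's own proof is a one-line reference to Proposition \ref{ub_0_continuous} combined with the reasoning of Propositions \ref{ub_infty,0} and \ref{2ndestimateub}, i.e. exactly the density upgrade to the trace-constrained Sobolev class, the weak$^\ast$-$BV$ lower semicontinuity of $G^0$ from Lemma \ref{preFlsc}, and the identification of the relaxed functional with $K^{b,0}$ via \cite[Example~4.1]{BFMTraces}. Your write-up simply makes explicit the steps the paper leaves implicit.
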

\begin{proof}
    From Proposition \ref{ub_0_continuous}, by applying a similar reasoning as in Proposition \ref{ub_infty,0} or Proposition \ref{2ndestimateub} we get the desired result.
\end{proof}

\begin{Remark}
    \label{general_dirichlet3}
    We note that, similarly to Remark \ref{general_dirichlet2}, in this section the arguments regarding the "below" functions $(u_n^b)_n$ in the upper bound and lower bound results are similar to the first case, $\ell=q\in ]0,+\infty[$. Remark \ref{general_dirichlet1} also applies to Proposition \ref{lbound_0}. For Proposition \ref{ub_0_continuous} we note that if in the construction of the recovery sequence we replace $z^b$ by $z^b+g^{b,0}$ with $z^b\in W^{1,1}_0(\omega;\RI^{3})$ and a general $g^{b,0}\in W^{1,1}(\omega;\RI^3)$ then we achieve the same results.
\end{Remark}

\section{The super-linear case}
For the sake of completeness we comment here on the case of super-linear growth for  $\ell= +\infty$ and $\ell= 0$, cases that were not addressed in \cite{GZNODEA}.
We underline that our subsequent analysis is quite complete in the case $\ell=\infty$, in the sense that all the $p>1$ are taken into account, while in the case $\ell=0$, due to the adopted techniques we are able to consider only $1<p<2$.

For $p >1$, assume that $W$ is a Borel function satisfying the following hypotheses:
\begin{equation}\label{coercip}
 W(M) \leq C(1+|M|^p),\quad\forall M\in \mathbb{R}^{3\x
3}
\end{equation}
for some $C > 0$
and
\begin{equation}\label{growthp}
   \frac{1}{C}|M-Id|^p\leq W(M) \quad \forall M \in \mathbb{R}^{{3\x 3}}, \quad \; \;  W(Id) = 0.
\end{equation}

\subsection{Case $\ell = +\infty$}

We start defining

\begin{align}\label{Upinftyn}\mathcal{U}^{\infty,p}_n=\Big\{ &(u^a, u^b) \in \left(c^{a,\infty}
+W_a^{1,p}
(\Omega^a;\RR^3)\right)\x \left(f^{b,\infty}+h_n g^{b,\infty}x_3
+W^{1,p}_b(\Omega^b;\RR^3)\right):
\\ &\qquad 
u^a \text{ and } u^b \text{ satisfy \eqref{eq:bc}}\Big\} \nonumber
\end{align}
with  the boundary data given by \eqref{dirichletinfty} and $W_a^{1,p}(\Omega^a;\mathbb
R^3)$  the
closure, with respect to $W^ {1,p}$-norm, of
 $$\left\{u^a\in C^\infty(\overline{\Omega^a};\RR^3)\,:\,
 u^a={\bf 0} \hbox{ in a  neighbourhood of }\omega\x\{1\}\right\},$$
 and
  $W_b^{1,p}(\Omega^b;\RR^3)$  the closure, with respect to $W^{1,p}$-norm,
of
 $$\big\{u^b\in C^\infty(\overline{\Omega^b};\RR^3)\,:\,
 u^b={\bf 0} \hbox{ in a neighbourhood of }\partial\omega\times]-1,0[\big\}.$$ 
Moreover, we define
\begin{align}
    \label{eq:Fninftyp}
    F_n^{\infty,p}(u_n^a, u_n^b) =   K_n^{a,\infty}(u_n^a) +\frac{h_n}{r_n^2} K_n^{b,\infty}(u_n^b), \quad (u_n^a,u_n^b) \in \mathcal{U}^{\infty,p}_n, \end{align}
where $K_n^{a,\infty}, K_n^{b,\infty}$ are defined by \eqref{kna} and \eqref{knb}, taking into account that the loads are
\begin{align}
H_n^{a,\infty}\rightharpoonup H \hbox{ in } L^{p'}(\Omega^a;\mathbb R^3), \hbox{ and } \frac{h_n}{r^2_n} H_n^{b,\infty} \rightharpoonup 0\hbox{ in }L^{p'}(\Omega^b;\mathbb R^3) 
\label{rateinftyp}
\end{align}
where $H\in L^{p'} (\Omega;\mathbb R^{3})$, with $\frac{1}{p}+ \frac{1}{p'}=1$.

In this case, the compactness result becomes 
\begin{Proposition}\label{Compactnessp_infty}
    Consider the Borel function  \(W : \mathbb{R}^{3\x 3}\to 
    \mathbb{R}\) satisfying  \eqref{coercip} and \eqref{growthp}.  
    For  $n \in \mathbb N$ let \(r_n\), \(h_n\) be as in \eqref{hrzero}, with $\ell$ in \eqref{ell} coinciding with $+\infty.$ Let the spaces
    \(\mathcal{U}_n^{p,\infty}\) be as in \eqref{Upinftyn} 
    with Dirichlet conditions as in \eqref{dirichletinfty}, and the loads $H_n^{a,\infty},H_n^{b,\infty}$ satisfy \eqref{rateinftyp}.
    Let $F_n^{\infty,p}$  be the energy functional defined in \eqref{eq:Fninftyp}.
 Then, for every  
        $( u_n^a, u_n^b) \in \mathcal{U}^{\infty,p}_n$  such that \(\sup_{n\in\N} |F_n^{\infty,p}(  u_n^a,  u_n^b)|
<+\infty\), there exist an increasing sequence of positive integer
        numbers $(n_i)_{i \in \mathbb N}$,
        ${u}^a
        \in W^{1,p}(]0,1[;\RI^3)$ 
        depending possibly on
        the selected subsequence $(n_i)_{i \in \mathbb N}$, such that
             \begin{equation*}
             \left\{
                \begin{array}{l}
                    u^a_{n_i}\rightharpoonup  {u}^a  \hbox{ in
                        }W^{1,p}(\Omega^a;\RR^3),\\\\  u^b_{n_i} \to (x_\alpha,0) \hbox{ in
                        }W^{1,p}(\Omega^b;\RR^3),\end{array}\right.\end{equation*}
        \noindent and, it results
         \begin{equation*}
         \displaystyle{\frac{1}{h_{n_i}}}\nabla_3  u^b_{n_i}
                        \to  (0_\alpha,1) \hbox{  in
                        }L^p(\Omega^b;\mathbb R^3).
        \end{equation*}  
        Furthermore, if $p>2$, $u^a(0)={\bf 0}$.
        Moreover, for every $p>1$, if $\lim_{n\to \infty}\frac{h_n}{r_n}= \infty$, then $\lim_{n\to \infty} u^a_n(\cdot, 0) = {\bf 0}$ in $L^p(\omega;\mathbb R^3)$.
\end{Proposition}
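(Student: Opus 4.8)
The plan is to repeat verbatim the argument of Proposition~\ref{Compactness_infty} and Lemma~\ref{lemma_conv_trace_above}, with every $L^1$/$BV$ estimate replaced by its $L^p$ counterpart, which is now available thanks to the superlinear coercivity \eqref{growthp}. As usual the loads may be taken to be null: under \eqref{rateinftyp} the load terms are continuous along $L^p$--converging sequences (Remark~\ref{remloads}, with $L^1$, $L^\infty$ replaced by $L^p$, $L^{p'}$), so they play no role in the compactness.

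First I would extract the a priori bounds. Since $W\geq0$ by \eqref{growthp} and $\sup_n F_n^{\infty,p}(u_n^a,u_n^b)<+\infty$, one has $K_n^{a,\infty}(u_n^a)\leq C$ and $\tfrac{h_n}{r_n^2}K_n^{b,\infty}(u_n^b)\leq C$; inserting \eqref{growthp} into \eqref{kna} and \eqref{knb} gives, with $C$ independent of $n$, $\big\|\tfrac1{r_n}\nabla_\alpha u_n^a-Id_\alpha\big\|_{L^p(\Omega^a)}^p+\|\nabla_3 u_n^a-(0_\alpha,1)\|_{L^p(\Omega^a)}^p\leq C$ and $\|\nabla_\alpha u_n^b-Id_\alpha\|_{L^p(\Omega^b)}^p+\big\|\tfrac1{h_n}\nabla_3 u_n^b-(0_\alpha,1)\big\|_{L^p(\Omega^b)}^p\leq C\tfrac{r_n^2}{h_n}$. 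Hence $\nabla u_n^a$ is bounded in $L^p(\Omega^a;\mathbb R^{3\times3})$ with $\nabla_\alpha u_n^a\to{\bf 0}$ in $L^p$; since $u_n^a-c^{a,\infty}\in W_a^{1,p}(\Omega^a;\mathbb R^3)$, Poincar\'e's inequality makes $(u_n^a)_n$ bounded in $W^{1,p}(\Omega^a;\mathbb R^3)$, so along a subsequence $u^a_{n_i}\rightharpoonup u^a$ in $W^{1,p}(\Omega^a;\mathbb R^3)$ with $\nabla_\alpha u^a={\bf 0}$, i.e. $u^a\in W^{1,p}(]0,1[;\mathbb R^3)$. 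As $\ell=+\infty$ forces $r_n^2/h_n\to0$, the second bound gives $\nabla_\alpha u_n^b\to Id_\alpha$ and $\tfrac1{h_n}\nabla_3 u_n^b\to(0_\alpha,1)$ strongly in $L^p(\Omega^b)$, hence also $\nabla_3 u_n^b\to{\bf 0}$ in $L^p(\Omega^b)$; putting $g_n:=u_n^b-(x_\alpha,h_nx_3)$, by \eqref{dirichletinfty} one has $g_n\in W_b^{1,p}(\Omega^b;\mathbb R^3)$ and $\nabla g_n\to{\bf 0}$ in $L^p$, so Poincar\'e yields $g_n\to{\bf 0}$ in $W^{1,p}(\Omega^b;\mathbb R^3)$, i.e. $u_n^b\to(x_\alpha,0)$ in $W^{1,p}(\Omega^b;\mathbb R^3)$ and $\tfrac1{h_n}\nabla_3 u_n^b\to(0_\alpha,1)$ in $L^p(\Omega^b;\mathbb R^3)$.

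The behaviour at the junction I would treat through the $L^p$ version of Proposition~\ref{traceconvergence}. From the bounds above, $g_n$ vanishes on $\partial\omega\times]-1,0[$, $\|\nabla_\alpha g_n\|_{L^p(\Omega^b)}^p\leq C r_n^2/h_n$ and $\|\nabla_3 g_n\|_{L^p(\Omega^b)}^p\leq C h_n^{p-1}r_n^2$. Extending $g_n$ by ${\bf 0}$ to $\mathbb R^2\times]-1,0[$ and integrating along lines parallel to the axes of $\mathbb R^2$ from $\partial\omega$, then using H\"older's inequality as in \eqref{point_ineq}--\eqref{ineq:trace_conv}, one obtains the thin--domain estimate $\|g_n\|_{L^p(r_n\omega\times]-1,0[)}^p\leq C\,r_n\,\|\nabla_\alpha g_n\|_{L^p(\Omega^b)}^p\leq C\,r_n^3/h_n$. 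Setting $v_n(x_\alpha,x_3):=g_n(r_nx_\alpha,\rho_nx_3)$ with $\rho_n:=\sqrt{r_n}/h_n$ (truncating the $x_3$--integration to $]-1,0[$ where needed, since $g_n\equiv{\bf 0}$ below), a change of variables and the three displayed bounds give $\|v_n\|_{L^p(\Omega^b)}^p\leq C\sqrt{r_n}$, $\|\nabla_\alpha v_n\|_{L^p(\Omega^b)}^p\leq C r_n^{\,p-1/2}$ and $\|\nabla_3 v_n\|_{L^p(\Omega^b)}^p\leq C r_n^{\,(p-1)/2}$, all vanishing because $p>1$; hence $v_n\to{\bf 0}$ in $W^{1,p}(\Omega^b;\mathbb R^3)$, and by continuity of the trace $v_n(\cdot,0)=g_n(r_n\cdot,0)\to{\bf 0}$ in $L^p(\omega;\mathbb R^3)$. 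By \eqref{eq:bc}, $u_n^a(\cdot,0)=u_n^b(r_n\cdot,0)=g_n(r_n\cdot,0)+(r_n\cdot,0)$ and $(r_n\cdot,0)\to{\bf 0}$ in $L^p(\omega;\mathbb R^3)$, so $u_n^a(\cdot,0)\to{\bf 0}$ in $L^p(\omega;\mathbb R^3)$, which gives the last assertion when $\lim_n h_n/r_n=\infty$ (and in fact whenever $\ell=+\infty$). Finally, for $p>1$ the trace operator $W^{1,p}(\Omega^a;\mathbb R^3)\to L^p(\omega\times\{0\};\mathbb R^3)$ is compact, so $u^a_{n_i}(\cdot,0)\to u^a(0^+)=u^a(0)$ in $L^p(\omega;\mathbb R^3)$; comparing the two limits yields $u^a(0)={\bf 0}$, in particular for $p>2$.

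The weak/strong compactness of the first two steps is routine, being essentially Proposition~\ref{Compactness_infty} with exponent $p$. The only genuinely delicate point, on which I expect the main work to lie, is the $L^p$ analogue of Proposition~\ref{traceconvergence}: proving the thin--domain Poincar\'e estimate with the sharp power of $r_n$ and calibrating the dilation $\rho_n$ so that $\|v_n\|_{L^p}$, $\|\nabla_\alpha v_n\|_{L^p}$ and $\|\nabla_3 v_n\|_{L^p}$ all tend to zero simultaneously. The crucial new feature with respect to the linear case is the factor $h_n^{p-1}$ in the bound for $\nabla_3 g_n$ (which equals $1$ when $p=1$): it is exactly what forces $\nabla_3 v_n\to{\bf 0}$ and so makes the trace argument go through for every $p>1$.
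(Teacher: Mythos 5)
Your overall strategy coincides with the paper's: superlinear coercivity \eqref{growthp} gives the $L^p$ a priori bounds, Poincar\'e and weak compactness give the first two convergences, and the junction behaviour is handled by an $L^p$ adaptation of Proposition \ref{traceconvergence} via Jensen/H\"older. The first two paragraphs of your argument are correct and essentially identical to the paper's.

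There are, however, two genuine problems in your treatment of the junction. First, your dilation $\rho_n=\sqrt{r_n}/h_n$ need not be $\le 1$ (take $r_n=h_n^{3/2}$, which satisfies both $\ell=+\infty$ and $h_n/r_n\to\infty$, yet gives $\rho_n=h_n^{-1/4}\to\infty$). When $\rho_n>1$ the function $v_n(x_\alpha,x_3)=g_n(r_nx_\alpha,\rho_nx_3)$ samples the zero extension of $g_n$ \emph{below} the face $\omega\times\{-1\}$, where $g_n$ has no reason to vanish; that extension is only $BV$, not $W^{1,p}$, so $v_n\notin W^{1,p}(\Omega^b)$ and the trace-continuity step is not available. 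The paper avoids this entirely by taking no vertical rescaling ($\rho_n=1$), for which the three norms are bounded by $Cr_n/h_n$, $Cr_n^p/h_n$ and $Ch_n^{p-1}$, all vanishing precisely under the hypothesis $h_n/r_n\to\infty$. Second, and more seriously, your parenthetical upgrade --- that $u_n^a(\cdot,0)\to{\bf 0}$ ``in fact whenever $\ell=+\infty$'', whence $u^a(0)={\bf 0}$ for every $p>1$ --- is not justified and cannot be true: it would contradict the first case of Proposition \ref{prop_p_infty}, where for $1<p<2$ and $h_n/r_n^p\to0$ the $\Gamma$-limit is finite on all of $c^{a,\infty}+W^{1,p}_a(]0,1[;\RI^3)$ with no condition at the origin. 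The only way your estimates force the trace to vanish with $\rho_n\le1$ is through $r_n/(\rho_nh_n)\ge r_n/h_n\to0$, i.e.\ exactly under $h_n/r_n\to\infty$; pushing $\rho_n$ above $1$ to beat this is precisely the illegitimate step. Consequently your derivation of the assertion ``$u^a(0)={\bf 0}$ if $p>2$'' is also unsupported: in the paper this follows from a different mechanism (the arguments of Proposition 2.1 in \cite{GGLM1}, which exploit $\|\nabla_\alpha u_n^a\|_{L^p(\Omega^a)}\le Cr_n$ together with the supercritical embedding $p>2$ in the two-dimensional cross-section), valid without any assumption on $h_n/r_n$, and not from the thin-film estimate in the lower cylinder.
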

\begin{proof}
    The proof is very similar to the other compactness results proven above, exploiting also the same arguments as in \cite[Proposition 2.1]{GGLM1} to obtain the null condition at the origin for $u^a$.

    The last convergence is obtained from an adaptation of Proposition \ref{traceconvergence}. Following the proof of Proposition \ref{Compactness_infty} with the super-linear assumptions we get as counterparts of \eqref{growth_alpha_infty}  and \eqref{growth_3_infty} the following,
    \begin{align*}
        \int_{{\Omega}^b}
        \left|\nabla_{\alpha}u^b_n(x) 
         \;  - \;Id_\alpha \right|^p dx \leq \frac{r_n^2}{h_n}C
        \\
        \int_{{\Omega}^b}
        \left|\frac{1}{h_n}\nabla_{3}u^b_n(x) 
         \;  - \;(0_\alpha,1)\right|^p dx \leq \frac{r_n^2}{h_n}C,
    \end{align*}
    Based on these inequalities, we follow the arguments in Proposition \ref{traceconvergence} with the new assumptions
    \begin{align}
    \label{growthalphapropp}
        &\|\nabla_\alpha g_n\|_{L^p(\Omega^b;\RI^3)}^p \leq C\frac{r_n^2}{h_n},  &\forall n\in \N,
        \\
        \label{growth3propp}
        &\|\nabla_3 g_n\|_{L^p(\Omega^b;\RI^3)}^p \leq Cr_n^2h_n^{p-1},  &\forall n\in \N,
        \\
        \label{boundarypropp}
        &g_n|_{\partial\omega\x(-1,0)}={\bf 0},  &\forall n\in \N,
        \\
        &\frac{h_n}{r_n} \to \infty, \quad r_n,h_n\to0 &\hbox{as } n \to \infty \label{inftyinftyp}.
    \end{align}
    After reaching the \eqref{point_ineq}, we apply Jensen's inequality and rest of the proof follows with the following inequality instead of \eqref{point_ineq}
    $$
    |g_n(\overline{x}_1,\overline{x}_2,\overline{x}_3)|^p 
    \leq \int_{-\delta r_n}^{\delta} \left|\nabla_\alpha g_n(\overline{x}_1,y,\overline{x}_3)\right|^p dy.
    $$
    This time, for $v_n (x_\alpha,x_3):=g_n(r_nx_\alpha,x_3)$, the equivalent to the proofs of 1), 2) and 3) become
    \begin{alignat}{2}
        \label{vnp_estimate}
        &\int_{\Omega^b} |v_n|^pdx = \frac{1}{r_n^2}\int_{r_n\omega\x(-1,0)}|g_n|^pdx \leq C\frac{1}{r_n}\|\nabla_\alpha g_n\|_{L^p(\Omega^b)}^p &&\leq C\frac{r_n}{h_n}
        \\
        \nonumber
        &\int_{\Omega^b} |\nabla_\alpha v_n|^pdx = \frac{1}{r_n^2}\int_{r_n\omega\x(-1,0)}r_n^p|\nabla_\alpha g_n|^pdx &&\leq  C\frac{r_n^p}{h_n}
        \\
        \nonumber
        &\int_{\Omega^b} |\nabla_3 v_n|^pdx = \frac{1}{r_n^2}\int_{r_n\omega\x(-1,0)}\rho_n^p|\nabla_3 g_n|^pdx &&\leq  Ch_n^{p-1}
    \end{alignat}
    and so $v_n\to \mathbf{0}$ in $W^{1,p}$ if $\frac{h_n}{r_n}\to+\infty$. The desired convergence $\lim_{n\to \infty} u^a_n(\cdot, 0) = {\bf 0}$ in $L^p(\omega;\mathbb R^3)$ follows from the same arguments as Proposition \ref{lemma_conv_trace_above}.
    \color{black}
\end{proof}

The following result holds
\begin{Proposition}[$\Gamma$ - limit $(p,\infty)$]
\label{prop_p_infty}
    Consider the Borel function  \(W : \mathbb{R}^{3\x 3}\to 
    \mathbb{R}\) satisfying \eqref{coercip} and \eqref{growthp}.  
    For  $n \in \mathbb N$ let the spaces
    \(\mathcal{U}_n^\infty\) be introduced in \eqref{Upinftyn}
    where the Dirichlet conditions in $\U_n^{\infty,p}$ are as in \eqref{dirichletinfty}.
    Let $F_n^{\infty,p}$  be the energy functional defined in \eqref{eq:Fninftyp},
    where \(r_n\), \(h_n\) satisfy conditions \eqref{hrzero} with $\ell=\infty$ and the loads $H_n^{a,\infty}, H_n^{b,\infty}$ satisfy conditions \eqref{rateinftyp}.

    Then, if $1<p<2$ and and $\lim_{n\to \infty} \displaystyle \frac{h_n}{r_n^p} = 0$ \color{black} it results that for every $u^a \in c^{a,\infty}+W^{1,p}_a(]0,1[;\RI^3)$ 

    \begin{align}\nonumber
G^{\infty,p}(u^a, (x_\alpha, 0)):=\inf&\left\{\liminf_nF_n^{\infty,p}(u_n^a, u_n^b): (u_n^a, u_n^b)\in \mathcal U^{\infty,p}_n,  u_n^a \rightharpoonup u^a \hbox{ in }W^{1,p}(\Omega^a;\mathbb R^3), \right. \nonumber
\\
&\left.
u_n^b  \to (x_\alpha,0) \hbox{ in }W^{1,p}(\Omega^b;\mathbb R^3)\right\} = \displaystyle{\int_{]0,1[}{\hat W}^{**}\left(
        \nabla_{3}u^a(x_3)\right)}dx +\int_{\Omega^a} H(x)\cdot u^a(x_3) dx, \label{lowerbound_infty2p}
    \end{align}
    
    On the other hand, if either $p>2$ (with no assumption on the ratio $h_n/r_n$) or $1<p\leq2$ and $\lim_{n\to \infty}\frac{h_n}{r_n}=\infty$, then, the same representation holds for every $u^a \in c^{a,\infty}x_3 + W^{1,p}_0(]0,1[;\RI^3)$. 
\end{Proposition}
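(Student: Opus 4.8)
The plan is to run the same lower-bound/upper-bound scheme of the linear case (Propositions~\ref{lbinfty0}--\ref{ub_joined}), replacing $BV$ weak-$\ast$ compactness by weak $W^{1,p}$ compactness; since $p>1$ the trace constraint is preserved along weakly converging sequences, so no interior jump or boundary penalization survives, except the rigid constraint $u^a(0)={\bf 0}$ in the subcases where Proposition~\ref{Compactnessp_infty} forces it. First I would note, exactly as in Lemma~\ref{lemma29} (whose proof carries over to exponent $p$ using \cite[Theorem~9.1]{DA}), that in the upper-bound part $W$ may be taken quasiconvex, hence continuous with $p$-growth; this does not affect $\hat W$ in \eqref{hatW} nor its convex envelope $\hat W^{\ast\ast}$, which is automatically continuous with $p$-growth by \eqref{coercip}. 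By Proposition~\ref{Compactnessp_infty} we may also restrict to $(u_n^a,u_n^b)\in\mathcal U_n^{\infty,p}$ with $u_n^a\weak u^a$ in $W^{1,p}(\Omega^a;\RI^3)$ and $u_n^b\to(x_\alpha,0)$ in $W^{1,p}(\Omega^b;\RI^3)$, and in the cases $p>2$ or ($1<p\le2$, $h_n/r_n\to\infty$) also $u^a(0)={\bf 0}$.

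\textbf{Lower bound.} Given an admissible sequence, since $W\ge0$ by \eqref{growthp} and $\tfrac{h_n}{r_n^2}H_n^{b,\infty}\weak{\bf 0}$ in $L^{p'}(\Omega^b;\RI^3)$ while $u_n^b\to(x_\alpha,0)$ in $L^p$, one gets $\liminf_n\tfrac{h_n}{r_n^2}K_n^{b,\infty}(u_n^b)\ge0$. For the ``$a$'' part, $W(\tfrac1{r_n}\nabla_\alpha u_n^a\,|\,\nabla_3u_n^a)\ge\hat W(\nabla_3u_n^a)\ge\hat W^{\ast\ast}(\nabla_3u_n^a)$ by \eqref{hatW}; as $\hat W^{\ast\ast}$ is convex and $\nabla_3u_n^a\weak\nabla_3u^a$ in $L^p(\Omega^a;\RI^3)$, weak lower semicontinuity of convex integral functionals together with the weak--strong pairing of $H_n^{a,\infty}$ with $u_n^a$ (strong $L^p$ convergence of $u_n^a$ by Rellich) give, using $\mathcal L^2(\omega)=1$,
\[
\liminf_n K_n^{a,\infty}(u_n^a)\ \ge\ \int_{]0,1[}\hat W^{\ast\ast}\!\big(\nabla_3u^a(x_3)\big)\,dx_3\ +\ \int_{\Omega^a}H\cdot u^a\,dx .
\]
Summing the two estimates and taking the infimum over admissible sequences yields the inequality ``$\ge$'' in \eqref{lowerbound_infty2p}.

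\textbf{Upper bound.} I would first produce, for smooth $u^a$ in the relevant affine space and for an arbitrary $z^a\in W_0^{1,p}(]0,1[;\RI^{3\times2})$, sequences with $\limsup_n F_n^{\infty,p}(u_n^a,u_n^b)\le\int_{\Omega^a}W(z^a(x_3)\,|\,\nabla_3u^a(x_3))\,dx+\int_{\Omega^a}H\cdot u^a\,dx$. In the regime $1<p<2$, $h_n/r_n^p\to0$, take $u_n^a(x_\alpha,x_3):=z^a(x_3)r_nx_\alpha+u^a(x_3)$ and $u_n^b$ as the cut-off gluing \eqref{seq_un_below}--\eqref{cutoff}: as in Lemma~\ref{ub_sobolev_infty}, $\tfrac1{r_n}\nabla_\alpha u_n^a\to z^a$ and $\nabla_3u_n^a\to\nabla_3u^a$ strongly in $L^p(\Omega^a)$, so $K_n^{a,\infty}(u_n^a)$ converges to the stated value by continuity of $W$ and $p$-growth, while in $\tfrac{h_n}{r_n^2}K_n^{b,\infty}(u_n^b)$ the term produced by $\nabla_\alpha\phi_n$ is $O\!\big(\tfrac{h_n}{r_n^2}\cdot r_n^{-p}\cdot r_n^2\big)=O(h_n/r_n^p)\to0$ and the remaining terms vanish as in the linear proof, so $\tfrac{h_n}{r_n^2}K_n^{b,\infty}(u_n^b)\to0$. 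In the regime $p>2$ (any ratio) or $1<p\le2$ with $h_n/r_n\to\infty$, where $u^a(0)={\bf 0}$, take instead $u_n^b(x_\alpha,x_3):=(x_\alpha,h_nx_3)$, for which $(\nabla_\alpha u_n^b\,|\,\tfrac1{h_n}\nabla_3u_n^b)=Id$ and hence $K_n^{b,\infty}(u_n^b)$ reduces to a vanishing load contribution since $W(Id)=0$, together with $u_n^a$ modified on a layer $\omega\times]0,\varepsilon_n[$ matching $u_n^b(r_nx_\alpha,0)=(r_nx_\alpha,0)$ exactly as in Proposition~\ref{ub_continuous_joined}; choosing $\varepsilon_n\to0$ with $r_n^{p/(p-1)}/\varepsilon_n\to0$ (e.g.\ $\varepsilon_n=r_n^{1/2}$, as $p/(p-1)>1/2$), the layer energy is $O(\varepsilon_n)+O\!\big(r_n^p/\varepsilon_n^{p-1}\big)\to0$ and $u_n^a\to u^a$ strongly in $L^p$, weakly in $W^{1,p}$. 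In both cases, infimizing over $z^a$ (density of $W_0^{1,p}$ in $L^p$, continuity of $W$, dominated convergence and \eqref{hatW}, as in \cite[Proposition~7]{LeDR1}) gives $G^{\infty,p}(u^a,(x_\alpha,0))\le\int_{]0,1[}\hat W(\nabla_3u^a)\,dx_3+\int_{\Omega^a}H\cdot u^a\,dx$ for smooth $u^a$. Finally, arguing as in Proposition~\ref{ub_continuous_joined} (resp.\ Proposition~\ref{2ndestimateub}), the weak $W^{1,p}$ lower semicontinuity of $G^{\infty,p}$ (a Lemma~\ref{preFlsc}-type diagonalization using \eqref{growthp} and metrizability of the weak topology on bounded sets) together with the fact that the relaxation in $W^{1,p}(]0,1[;\RI^3)$ of $u^a\mapsto\int_{]0,1[}\hat W(\nabla_3u^a)\,dx_3+\int_{\Omega^a}H\cdot u^a\,dx$ is $\int_{]0,1[}\hat W^{\ast\ast}(\nabla_3u^a)\,dx_3+\int_{\Omega^a}H\cdot u^a\,dx$ (a one-dimensional convexification, the load being weakly continuous) upgrades this to the reverse inequality in \eqref{lowerbound_infty2p}.

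\textbf{Main obstacle.} The crucial point is the choice of the transition scale and the verification that the extra energy it carries is infinitesimal: in the first regime this is exactly what $h_n/r_n^p\to0$ buys (the $p$-th power of the cut-off gradient $\sim r_n^{-1}$, integrated over an annulus of volume $O(r_n^2)$ and weighted by $h_n/r_n^2$), whereas in the second regime the vertical layer width $\varepsilon_n$ must be tuned against $r_n$ so as to spoil neither the $W^{1,p}$ compactness of $(u_n^a)_n$ nor the smallness of the layer energy; this is also where the restriction to $1<p<2$ in the genuinely uncoupled-but-not-forced regime originates. Everything else is a routine transcription of the linear-growth arguments to the reflexive setting, where weak $W^{1,p}$ compactness replaces $BV$ weak-$\ast$ compactness and no concentration, hence no recession-function term, can occur in the interior or at the constrained boundary.
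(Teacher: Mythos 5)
Your proposal is correct and follows essentially the same route as the paper's proof: the lower bound via $W\ge\hat W\ge\hat W^{**}$ and convexity/weak lower semicontinuity in $W^{1,p}$, the upper bound via the cut-off gluing of Lemma \ref{ub_sobolev_infty} in the regime $h_n/r_n^p\to0$ (with the same observation that the cut-off term scales as $O(h_n/r_n^p)$) and via the junction-layer construction of Proposition \ref{ub_continuous_joined} in the other regime, followed by standard relaxation. You merely supply more detail than the paper's sketch (explicit tuning of $\varepsilon_n$, explicit treatment of the loads), which is consistent with, not different from, the intended argument.
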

\begin{proof}[Proof] As in the case $p=1$ we assume that $H={\bf 0}$. Then, it is easily observed that the functional
$\int_{]0,1[}{\hat W}^{**}\left(
        \nabla_{3}u^a(x_3)\right)dx$
       is trivially a lower bound for the left hand side energy in \eqref{lowerbound_infty2p}, due to standard lower semicontinuity  results in the Sobolev setting, see \cite{DA}.

For what concerns the upper bound in the first case, i.e. if $1<p<2$ 
and $\lim_{n\to \infty} \frac{h_n}{r_n^p} = 0$
one can argue as in the case $p=1$ and $(\infty, 0)$, Proposition \ref{ub_sobolev_infty}. Estimates analogous to \eqref{int1} and \eqref{int2} hold, except that this time, the absolute values of the integrands should have an exponent $p$, and in particular in \eqref{int1}, instead of $\frac{1}{\beta r_n}$ we have $\left(\frac{1}{\beta r_n}\right)^p$ so after the change of variables, we conclude that \eqref{int1} vanishes if $h_n/r_n^p \to 0$.
\color{black}

The same conclusion to get an upper bound of the type $\int_0^1 \hat W(\nabla_3u^a(x_3))dx_3$ can be easily achieved in $c^{a,\infty}+ W^{1,p}_a(]0,1[;\mathbb R^3)$. To get the desired upper bound we apply a standard relaxation.

As regards the upper bound in the cases $\ell=+\infty$ and $p>2$ or $\ell=+\infty$, $\lim_{n\to \infty}\frac{h_n}{r_n}=\infty$ for every $1<p\leq 2$, we can argue as in the proof of Proposition \ref{ub_continuous_joined} to get
\begin{align*} G^{\infty,p}(u^a, (x_\alpha, 0))
\leq \int_0^1 \hat W(\nabla_3 u^a(x_3))dx_3
\end{align*}
for every $u^a \in c^{a,\infty}x_3 + W^{1,p}_0(]0,1[;\mathbb R^3)$.
To conclude it is enough to apply once again standard relaxation results for $\int_0^1 \hat W(\nabla_3 u^a(x_3))dx_3$ in $c^{a,\infty}x_3 + W^{1,p}_0(]0,1[;\mathbb R^3)$ (see \cite{DA}) and exploit the weak lower semicontinuity in $W^{1,p}(]0,1[;\mathbb R^3)$ of $G^{\infty,p}$. 
\end{proof}
\begin{Remark}
    It is important to note that the intermediate case for $1<p<2$ when $\displaystyle\lim_n\frac{h_n}{r_n^p}\neq 0$ and $\displaystyle\lim_n\frac{h_n}{r_n}\neq+\infty$ isn't covered in the last proof. 
    
    We also note that if in the limit we get the junction condition $u^a(0)=u^b(0_\alpha)={\bf 0}$ then it is easy to get the integral representation for $\F^{\infty,p}(u^a,(x_\alpha,0))$.

    The idea to cover these missing cases is to improve the compactness Proposition \ref{Compactnessp_infty}, in particular it would be desirable that the condition $\lim_n \frac{h_n}{r_n^p}=\infty$ implies $u^a(0)={\bf 0}$. 
    
    Note that for this to happen, it is only needed to improve the bound in the proof (of Proposition \ref{Compactnessp_infty}) of $v_n$ \eqref{vnp_estimate} to $C r_n^p/h_n$. Then the proof would be concluded since the $\nabla v_n$ already behaves in this way. Unfortunately, in this work it wasn't possible to achieve this upper bound.
    \color{black}
\end{Remark}

\subsection{Case $\ell = 0$}
In this case, we are lead to study the asymptotic behaviour, with respect the $W^{1,p}$-weak convergence, as $n \to \infty$ of the rescaled energy $F_n^{0,p}:\U_n^{0,p}\to \mathbb R$ defined in analogy with \eqref{eq:Fn0}, 
\begin{align}
    \label{eq:Fn0p}
    F_n^{0,p}(u_n^a, u_n^b) =  \frac{r_n^2}{h_n} K_n^{a,0}(u_n^a) + K_n^{b,0}(u_n^b), \quad (u_n^a,u_n^b) \in \mathcal{U}^{0,p}_n, \end{align}
where $K_n^{a,0}, K_n^{b,0}$ are defined by \eqref{kna} and \eqref{knb}, taking into account that the loads satisfy

\begin{align}\label{rate 0p}
  H_n^{b,0} \weak H, \hbox{ and } \frac{r^2_n}{h_n}H_n^{a,0}\weak {\bf 0} \;\; \hbox{ in $L^{p'}(\Omega^a,\RI^3)$},
\end{align}

where $H$ and $p'$ are as in \eqref{rateinftyp},
and 
\begin{align}\label{Vn^p_0}
\mathcal{U}^{0,p}_n=\Big\{ &(u^a, u^b) \in \left(c^{a,0}+r_nd^{a,0}x_\alpha 
+W_a^{1,p}
(\Omega^a;\RR^3)\right)\x \left(f^{b,0}+h_n g^{b,0}x_3
+W^{1,p}_b(\Omega^b;\RR^3)\right):
\\ &\qquad 
u^a \text{ and } u^b \text{ satisfy \eqref{eq:bc}}\Big\}, \nonumber
\end{align}
with  the boundary data given by \eqref{dirichlet0} (with $f^{b,0}\in W^{1,p}(\omega;\mathbb R^3)$).

The $W^{1,p}$ counterpart of Proposition \ref{Compactness_0} asserts the following
\begin{Proposition}
    \label{compacntessres0p}
    Consider the Borel function  \(W : \mathbb{R}^{3\x 3}\to 
    \mathbb{R}\) satisfying \eqref{coercip} and \eqref{growthp}.  
    For  $n \in \mathbb N$ let \(r_n\), \(h_n\) be as in \eqref{hrzero}, with $\ell$ in \eqref{ell} coinciding with $0$. Let the spaces
    \(\mathcal{U}_n^{0,p}\) be as in \eqref{Vn^p_0}
    with Dirichlet conditions as in \eqref{dirichlet0}, and the loads $H_n^{a,0},H_n^{b,0}$ satisfy \eqref{rate 0p}.
    Let $F_n^{0,p}$  be the energy functional defined in \eqref{eq:Fn0p}.

 Then, for every  
        $( u_n^a, u_n^b) \in \mathcal{U}^{0,p}_n$  such that \(\sup_{n\in\N} |F_n^{0,p}(  u_n^a,  u_n^b)|
<+\infty\), there exist an increasing sequence of positive integer
        numbers $(n_i)_{i \in \mathbb N}$,
     ${u}^b
        \in f^{b,0}+W^{1,p}_b(\omega;\RI^3)$, 
        depending possibly on
        the selected subsequence $(n_i)_{i \in \mathbb N}$, 
        with $u^b(0_\alpha)=u^a(0)={\bf 0}$ if $p>2$
        such that
      
        \begin{equation*}\left\{
                \begin{array}{l}
                           u^a_{n_i} \to (0_\alpha,x_3) \hbox{ in
                        }W^{1,p}(\Omega^b;\RR^3),\\\\
                    u^b_{n_i}\overset{\ast}{\rightharpoonup}  {u}^b  \hbox{ in
                        }W^{1,p}(\Omega^a;\RR^3),
                        \end{array}\right.\end{equation*}\medskip
        \begin{equation*}
                        \displaystyle{\frac{1}{r_{n_i}}}\nabla_{\alpha}  u^a_{n_i}
                        \to  
                        Id_\alpha
                        \hbox{  in
                        }L^p(\Omega^b;\mathbb R^{3\x2}),
        \end{equation*}\medskip  
        where $Id_\alpha\in \mathbb R^{3\times 2}$ as in \eqref{matrixdef}.
        \end{Proposition}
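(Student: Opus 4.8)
The plan is to follow the template of the other compactness results in the paper (notably Propositions~\ref{Compactness_infty} and \ref{Compactness_0} and their super-linear analogue Proposition~\ref{Compactnessp_infty}), transferring all estimates from $L^1$ to $L^p$ via the coercivity hypothesis \eqref{growthp}. First I would start from the energy bound $\sup_n|F_n^{0,p}(u_n^a,u_n^b)|<+\infty$ and, neglecting the loads (legitimate by \eqref{rate 0p} together with a Poincar\'e-type argument compatible with the coercivity constant, exactly as in Remark~\ref{remloads}), deduce from \eqref{growthp} that
\begin{align*}
\frac{r_n^2}{h_n}\int_{\Omega^a}\Bigl|\Bigl(\tfrac{1}{r_n}\nabla_\alpha u_n^a \;\big|\; \nabla_3 u_n^a\Bigr)-Id\Bigr|^p dx + \int_{\Omega^b}\Bigl|\Bigl(\nabla_\alpha u_n^b \;\big|\; \tfrac{1}{h_n}\nabla_3 u_n^b\Bigr)-Id\Bigr|^p dx \leq C,
\end{align*}
which yields the two scaled estimates $\int_{\Omega^a}|\tfrac{1}{r_n}\nabla_\alpha u_n^a-Id_\alpha|^p dx\le C\tfrac{h_n}{r_n^2}$ and $\int_{\Omega^a}|\nabla_3 u_n^a-(0_\alpha,1)|^p dx\le C\tfrac{h_n}{r_n^2}\to 0$, together with a uniform $L^p$ bound on $\nabla u_n^b$.

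Next I would extract the limits. From the above, $\nabla u_n^a\to (Id_\alpha\,|\,\mathbf 0)$ strongly in $L^p(\Omega^a;\RR^{3\times3})$ and $\tfrac1{r_n}\nabla_\alpha u_n^a\to Id_\alpha$ in $L^p(\Omega^a;\RR^{3\times2})$; combined with the boundary condition $u_n^a|_{\omega\times\{1\}}=c^{a,0}+r_nd^{a,0}x_\alpha=(0_\alpha,1)+r_n Id_\alpha x_\alpha$ and strong $W^{1,p}$ convergence (Poincar\'e on $\Omega^a$ with the vanishing part), the trace forces $u_n^a\to(0_\alpha,x_3)$ in $W^{1,p}(\Omega^a;\RR^3)$ (the additive constant must vanish because the traces converge in $L^p$ to $(0_\alpha,1)+\text{const}$, forcing the constant to be $\mathbf 0$). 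For $u_n^b$, the uniform $W^{1,p}$ bound plus the boundary datum $f^{b,0}+h_ng^{b,0}x_3$ gives, up to a subsequence, $u_n^b\rightharpoonup u^b$ in $W^{1,p}(\Omega^b;\RR^3)$; and since $\tfrac1{h_n}\nabla_3 u_n^b$ is bounded in $L^p$, the limit satisfies $D_3u^b=\mathbf 0$, so $u^b\in f^{b,0}+W^{1,p}_b(\omega;\RR^3)$ after the usual identification. The boundary value at the origin, $u^a(0)=\mathbf 0$, and when $p>2$ also $u^b(0_\alpha)=\mathbf 0$, are obtained exactly as in \cite[Proposition~2.1]{GGLM1}: write the junction condition \eqref{eq:bc}, namely $u_n^a(x_\alpha,0)=u_n^b(r_nx_\alpha,0)$, and exploit that for $p>2$ the Sobolev embedding $W^{1,p}\hookrightarrow C^{0}$ on the $1$-dimensional fibre forces the trace of $u_n^a$ at $0$ to converge uniformly to $u^a(0)$, while the rescaled trace of $u_n^b$ on the shrinking disc $r_n\omega$ tends to $u^b(0_\alpha)$; matching the scaling of the Dirichlet condition on the rod then pins the common value to $\mathbf 0$.

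The main obstacle, and the only genuinely delicate point, is identifying the common junction value when $p>2$: one must carefully control the trace of $u_n^b$ on the vanishing cross-section $r_n\omega\times\{0\}$, using a rescaling argument in the spirit of Proposition~\ref{traceconvergence} (or directly the estimate in \cite[Proposition~2.1]{GGLM1}) to show $\tfrac1{r_n^2}\int_{r_n\omega}|u_n^b(x_\alpha,0)-(x_\alpha,0)|^p dx_\alpha\to 0$, and then feed this back through \eqref{eq:bc}. For $1<p\le 2$ no such junction condition is claimed (consistent with the restriction on $\ell=0$ noted in the introduction), so the statement is weaker there and the proof reduces to the compactness extraction described above. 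I would therefore organise the write-up as: (i) energy bound $\Rightarrow$ scaled gradient estimates; (ii) strong/weak convergences and the structure of the limits $D_\alpha u^a$, $D_3u^b$; (iii) boundary-datum propagation giving $u^a\to(0_\alpha,x_3)$ and $\tfrac1{r_n}\nabla_\alpha u_n^a\to Id_\alpha$; (iv) the junction-point analysis for $p>2$, citing \cite{GGLM1} and mirroring Proposition~\ref{traceconvergence}; noting throughout that the proof is, as stated in the excerpt, essentially identical in structure to that of Proposition~\ref{Compactness_infty} with $L^1$ replaced by $L^p$.
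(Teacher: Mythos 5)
Your proposal follows exactly the route the paper intends: the paper omits this proof, referring back to Proposition \ref{Compactness_0} (itself modelled on Proposition \ref{Compactness_infty}) together with \cite[Proposition 2.1]{GGLM1} and \cite[Section 5]{GZNODEA}, i.e.\ energy bound plus \eqref{growthp} giving the scaled $L^p$ gradient estimates, Poincar\'e and the Dirichlet data pinning down the limits, and the $p>2$ junction value coming from the trace argument of \cite{GGLM1}. One slip to correct: from your own estimates the full gradient satisfies $\nabla u_n^a \to \left(\mathbf{0}\,|\,(0_\alpha,1)\right)$ in $L^p$, the gradient of $(0_\alpha,x_3)$, not $(Id_\alpha\,|\,\mathbf{0})$ (you have transcribed the limit relevant to $u_n^b$ in the $\ell=+\infty$ case); the remainder of your argument already uses the correct limit, so this is only a typo to fix.
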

The proof is omitted, being very similar to the one of Proposition \ref{Compactness_0}, relying in turn on \cite[Proposition 2.1]{GGLM1} and the results contained in \cite[Section 5]{GZNODEA}.

The relaxation now has the form, for $u^b \in f^{b,0}+ W^{1,p}_a(\omega,\RI^3)$

\begin{align*}
G^{0,p}(u^b,(0_\alpha,x_3)):=\inf\left\{\liminf_n\left(\frac{r_n^2}{h_n}K^a_n(  u^a_n)+
K^b_n(  u^b_n)\right): (u_n^a, u_n^b)\in \mathcal U^{0,p}_n,  \right. \\
\left.u_n^a  \to (0_\alpha,x_3) \hbox{ in }W^{1,p}(\Omega^a;\mathbb R^3), 
u_n^b \overset{\ast}{\rightharpoonup} u^b \hbox{ in }W^{1,p}(\Omega^b;\mathbb R^3) \right\}.
\end{align*}

     The following representation result holds
\begin{Proposition}[$\Gamma$-limit $(p,0)$] Under the same assumptions as in Proposition \ref{compacntessres0p}, and with $1<p< 2$, it results
\begin{align*}
G^{0,p}((0_\alpha, x_3), u^b)=\int_{\Omega^b} Q W_0( \nabla_\alpha u^b (x_\alpha))dx + \int_{\Omega^b} H(x)\cdot u^b(x_\alpha) d x_\alpha,
\end{align*}
for every $u^b \in f^{0,b}+ W^{1,p}_a(\omega;\mathbb R^3)$.\end{Proposition}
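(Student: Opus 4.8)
The plan is to mimic closely the structure already used for the case $\ell=q$ in the linear setting, replacing $BV$ by $W^{1,p}$ throughout and using that for $1<p<2$ the compactness Proposition \ref{compacntessres0p} forces the limit of $u_n^a$ to be the rigid field $(0_\alpha,x_3)$ and the scaled gradients $\tfrac{1}{r_n}\nabla_\alpha u_n^a$ to converge to $Id_\alpha$, so that the whole energy localizes on the ``below'' cylinder. As in Remark \ref{remloads}, the load term is continuous along $L^p$-strongly (hence $W^{1,p}$-weakly) convergent sequences, so I would first reduce to $H={\bf 0}$ and add the linear term back at the end.

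\emph{Lower bound.} First I would discard the ``above'' contribution: by \eqref{growthp} and $W(Id)=0$ one has $\tfrac{r_n^2}{h_n}K_n^{a,0}(u_n^a)\ge 0$, so $\liminf_n F_n^{0,p}(u_n^a,u_n^b)\ge \liminf_n K_n^{b,0}(u_n^b)$. Next, by \eqref{W0} we have $W(\nabla_\alpha u_n^b\,|\,\tfrac{1}{h_n}\nabla_3 u_n^b)\ge W_0(\nabla_\alpha u_n^b)\ge QW_0(\nabla_\alpha u_n^b)$ pointwise, so it suffices to bound $\liminf_n\int_{\Omega^b}QW_0(\nabla_\alpha u_n^b)\,dx$ from below. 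Since $QW_0$ is quasiconvex with $p$-growth (by \eqref{coercip}), extend it to a quasiconvex $Z$ on $\mathbb R^{3\times3}$ as in \cite{LeDR1}, and apply standard $W^{1,p}$-weak lower semicontinuity (see \cite{DA}) together with the fact that $u_n^b\rightharpoonup u^b$ in $W^{1,p}(\Omega^b;\mathbb R^3)$, which by Proposition \ref{compacntessres0p} has $D_3u^b=0$, i.e.\ $u^b\in W^{1,p}(\omega;\mathbb R^3)$. This yields $\liminf_n\int_{\Omega^b}QW_0(\nabla_\alpha u_n^b)\,dx\ge \int_{\Omega^b}QW_0(\nabla_\alpha u^b(x_\alpha))\,dx$. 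Here, unlike the $BV$ case, there is no surface term on $\partial\omega\times]-1,0[$ because the boundary datum $f^{b,0}$ is attained exactly in $W^{1,p}$ and no jump part can form; similarly no trace penalization at $0_\alpha$ survives since, for $1<p<2$, $W^{1,p}(\omega)$ does not control point values so the junction condition \eqref{eq:bc} is lost in the limit.

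\emph{Upper bound.} For $u^b\in f^{b,0}+C^\infty_0(\omega;\mathbb R^3)$ I would reuse verbatim the recovery sequence of Proposition \ref{ub_0_continuous}: set $u_n^a(x_\alpha,x_3)=(r_nx_\alpha,x_3)$ and $u_n^b = (u^b+z^bh_nx_3)(1-\phi_n)+ (x_\alpha,-h_nx_3)\phi_n$ with $z^b\in W^{1,p}_0(\omega;\mathbb R^{3})$ and $\phi_n$ the cut-off \eqref{cutoff}. The estimates there are carried out in $L^1$ but, since $|B_n|=O(r_n^2)$ and $u^b$ is bounded, the same computations with $p$-th powers and $|\nabla\phi_n|^p\le (C/\beta r_n)^p$ give $\tfrac{r_n^2}{h_n}K_n^{a,0}(u_n^a)\to0$ (using $W(Id)=0$, \eqref{growthp}) and $\nabla_\alpha u_n^b\to\nabla_\alpha u^b$, $\tfrac1{h_n}\nabla_3 u_n^b\to z^b$ in $L^p(\Omega^b)$; the condition $1<p<2$ together with $\ell=0$ is exactly what makes the boundary-layer term $\tfrac{r_n^2}{h_n}\int_{B_n\setminus A_n}(\tfrac{C}{\beta r_n})^p|\cdots|^p$ vanish. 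By the continuity of $W$ and dominated convergence, $F_n^{0,p}(u_n^a,u_n^b)\to\int_{\Omega^b}W(\nabla_\alpha u^b(x_\alpha)\,|\,z^b(x_\alpha))\,dx$; infimizing over $z^b$ via Aumann's selection as in \cite[Proposition~7]{LeDR1} gives $\int_{\Omega^b}W_0(\nabla_\alpha u^b)\,dx$. Finally, a standard relaxation in $W^{1,p}$ (\cite{DA}), the density of $f^{b,0}+C^\infty_0$ in $f^{b,0}+W^{1,p}_b(\omega;\mathbb R^3)$, and the $W^{1,p}$-weak lower semicontinuity of $G^{0,p}$ upgrade the bound to $QW_0$ and to all $u^b\in f^{b,0}+W^{1,p}_b(\omega;\mathbb R^3)$. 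Re-inserting the continuous load term yields the stated identity.

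\emph{Main obstacle.} The delicate point is entirely in the upper bound: controlling the boundary-layer energy near $r_n\omega\times\{0\}$, i.e.\ showing the $\phi_n$-terms vanish. This is precisely where the hypothesis $1<p<2$ (and $\ell=0$) is used — for $p\ge 2$ the scaling $\tfrac{r_n^2}{h_n}(r_n)^{-p}$ no longer tends to $0$ in general, which is why the statement is restricted to $1<p<2$. Everything else is a routine transcription of the linear-growth arguments to the reflexive $W^{1,p}$ setting, where weak compactness and classical lower semicontinuity replace the $BV$ machinery of \cite{BFMTraces}.
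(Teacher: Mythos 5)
Your proposal is correct and follows essentially the same route as the paper, which itself only sketches the argument by referring back to the lower bound of Proposition \ref{lbound_0} and the recovery sequence of Proposition \ref{ub_0_continuous} (with $u_n^a=(r_nx_\alpha,x_3)$ so that $W(Id)=0$ kills the above term), followed by infimization over $z^b$ and standard relaxation/density results to pass from $W_0$ on smooth fields to $QW_0$ on all of $f^{b,0}+W^{1,p}$. One bookkeeping remark: in the $\ell=0$ scaling the cut-off term sits inside $K_n^{b,0}$, which carries no $r_n^2/h_n$ prefactor, so the boundary layer is controlled by $r_n^{-p}|B_n|=O(r_n^{2-p})$ and it is the hypothesis $1<p<2$ alone that makes it vanish.
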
  
\begin{proof}[Proof]
The proof is sketched, (assuming once again for simplicity that the loads are null) relying on arguments entirely similar to the lower bound in Proposition \ref{lbound_0} and the upper bound in Proposition \ref{ub_0_continuous}, taking into account that, with the latter argument we get
$$
G^{0,p}((0_\alpha, x_3), u^b)\leq \int_{\Omega^b} W_0( \nabla_\alpha u^b(x_\alpha) )dx,
$$
for every $u^b \in f^{0,b}+ C^\infty_0(\omega;\mathbb R^3)$, with $u^b(0_\alpha)={\bf 0}$.
The inequality 
$$
G^{0,p}((0_\alpha, x_3), u^b)\leq \int_{\Omega^b} QW_0( \nabla_\alpha u^b (x_\alpha))dx,
$$
for every $u^b \in f^{0,b}+ W^{1,p}_a(\omega;\mathbb R^3),$
is obtained via standard density results as those in \cite[Section 4]{GZNODEA}.
\end{proof}

\section*{Acknowledgements}

Part of this paper has been written during the Erasmus traineeship of GC in the second semester of 2023 at Sapienza-University of Rome and while JM was visiting professor there at Dipartimento di Scienze di Base ed Applicate per l'Ingegneria, whose hospitality both gratefully acknowledge. JM also acknowledges financial support from GNAMPA project, professori visitatori 2022 and FCT/Portugal through CAMGSD, IST-ID, projects UIDB/04459/2020 and UIDP/04459/2020. 
EZ is a member of INdAM-GNAMPA and thanks for its support, through GNAMPA Project 2023 `Prospettive nelle Scienze dei Materiali: modelli variazionali, analisi asintotica e omogeneizzazione'.

\end{document}